\documentclass{amsart}

\usepackage[sort&compress, round]{natbib}

\usepackage[normalem]{ulem}

\usepackage{amsmath, amssymb, amsthm, amscd, verbatim}

\usepackage{enumitem}

\usepackage[usenames,dvipsnames]{color}

\usepackage{tikz-cd}

\usepackage{mathabx}

\usepackage{hyperref}
\hypersetup{
           breaklinks=true,   
           colorlinks=true,   
			citecolor = Green
        }

\newcommand{\R}  {{\mathbb R}}
\newcommand{\C}  {{\mathbb C}}
\newcommand{\K}  {{\mathbb K}}
\newcommand{\N}  {{\mathbb N}}
\newcommand{\Z}  {{\mathbb Z}}
\newcommand{\ba} {{\boldsymbol{\alpha}}}
\newcommand{\bb} {{\boldsymbol{\beta}}}
\newcommand{\bg} {{\boldsymbol{\gamma}}}
\newcommand{\bsa} {{\boldsymbol{a}}}
\newcommand{\bm} {{\boldsymbol{m}}}
\newcommand{\bn} {{\boldsymbol{\nu}}}
\newcommand{\bx} {{\boldsymbol{x}}}
\newcommand{\by} {{\boldsymbol{y}}}
\newcommand{\bz} {{\boldsymbol{z}}}
\newcommand{\bu} {{\boldsymbol{u}}}
\newcommand{\bv} {{\boldsymbol{v}}}
\newcommand{\bof}{{\boldsymbol{f}}}
\newcommand{\bog}{{\boldsymbol{g}}}
\newcommand{\bN} {{\boldsymbol{N}}}
\newcommand{\bU} {{\boldsymbol{U}}}
\newcommand{\X}  {{\mathfrak X}}
\newcommand{\Y}  {{\mathfrak Y}}
\newcommand{\KK} {{\mathcal{K}}}

\newcommand{\eps}  {\varepsilon}
\renewcommand{\phi}{\varphi}

\newcommand{\scp}[2]{\langle #1, #2 \rangle}
\newcommand{\spann} {\operatorname{span}}

\newcommand{\ha}{A1}
\newcommand{\hb}{A2}
\newcommand{\hc}{A3}

\newcommand{\lo}{\downarrow}
\newcommand{\up}{\uparrow}

\newcommand{\Act}{\operatorname{Act}}

\DeclareMathOperator{\decay}{decay}
\DeclareMathOperator{\dec}{dec}
\DeclareMathOperator{\err}{err}
\DeclareMathOperator{\cost}{cost}

\newtheorem{lemma}{Lemma}[section]
\newtheorem{theo}[lemma]{Theorem}
\newtheorem{corollary}[lemma]{Corollary}

\theoremstyle{definition}
\newtheorem{rem}[lemma]{Remark}

\usepackage[usenames,dvipsnames]{color}

\begin{document}

\title[]{Infinite-dimensional integration and $L^2$-approximation on 
Hermite spaces}

\author[Gnewuch]
{M.~Gnewuch}
\address{
Institut f\"ur Mathematik\\
Universit\"at Osnabr\"uck\\
Albrechtstra{\ss}e \ 28A\\
49076 Osnabr\"uck\\ 
Germany}
\email{michael.gnewuch@uni-osnabrueck.de}

\author[Hinrichs]
{A.~Hinrichs}
\address{
Institut f\"ur Analysis\\
Johannes-Kepler-Universit\"at Linz\\
Altenberger Str.\ 69\\
4040 Linz\\
Austria}
\email{aicke.hinrichs@jku.at}

\author[Ritter]
{K.~Ritter}
\address{Fachbereich Mathematik\\
Rheinland-Pfälzische Technische Universit\"at Kaisers\-lautern-Landau\\
Postfach 3049\\
67653 Kaiserslautern\\
Germany}
\email{ritter@mathematik.uni-kl.de}

\author[R\"u\ss mann]
{R.~R\"u\ss mann}
\address{Fachbereich Mathematik\\
Rheinland-Pfälzische Technische Universit\"at Kaisers\-lautern-Landau\\
Postfach 3049\\
67653 Kaiserslautern\\
Germany}
\email{ruessmann@mathematik.uni-kl.de}

\date{November 04, 2023}

\keywords{
Infinite-variate numerical problems, 
Hermite kernels,
embedding theorems,
countable tensor products of Hilbert spaces,
Smolyak algorithm,
multivariate decomposition method}

\begin{abstract}
We study integration and $L^2$-approximation of functions 
of infinitely many variables in the following setting:
The underlying function space 
is the countably infinite tensor product of univariate Hermite spaces 
and the probability measure is the corresponding product
of the standard normal distribution.
The maximal domain of the functions from this tensor product space
is necessarily a proper subset of the sequence space $\R^\N$.
We establish upper and lower bounds for the minimal
worst case errors under general assumptions;
these bounds do match for tensor products of
well-studied Hermite spaces of functions with finite or 
with infinite smoothness.
In the proofs we employ embedding results,
and the upper bounds are attained constructively with the help of
multivariate decomposition methods.
\end{abstract}

\maketitle

\section{Introduction}\label{s1}

We study integration and $L^2$-approximation, based on function 
evaluations, in the worst-case setting on the unit ball in a
reproducing kernel Hilbert space (RKHS) $H(K)$. In this paper 
the space $H(K)$ consists of functions of infinitely many real 
variables, and it is obtained as the tensor product of
RKHSs $H(k_j)$ of functions of a single variable. More precisely, 
the measure $\mu$ on $\R^\N$ that defines the integral and
the $L^2$-norm is the infinite product of the standard normal
distribution $\mu_0$ on $\R$, and the reproducing kernel 
\[
K \colon \X \times \X \to \R
\]
is the infinite tensor product of Hermite kernels 
$k_j \colon \R \times \R \to \R$
on a suitable domain $\X$ in the sequence space $\R^\N$.

In general, a univariate \emph{Hermite kernel}
$k_j$ is defined in terms of the
Hermite polynomials $h_\nu$ of degree $\nu \in \N_0$, normalized in the 
space $L^2(\mu_0)$, and of \emph{Fourier weights} $\alpha_{\nu,j}$ 
with $\nu \in \N$ in the following way: Assuming
\[
\inf_{\nu \in \N} \alpha_{\nu,j} > 0 \qquad \text{and} \qquad
\sum_{\nu\in \N} \alpha_{\nu,j}^{-1} \cdot \nu^{-1/2} < \infty,
\]
we define
\begin{equation}\label{g22}
\phantom{\qquad\quad x,y \in \R.}
k_j (x,y) := 
1 +\sum_{\nu\in \N} 
\alpha_{\nu,j}^{-1} \cdot h_\nu (x) \cdot h_\nu (y),
\qquad\quad x,y \in \R.
\end{equation}
The regularity of the functions from the \emph{Hermite space}
$H(k_j) \subseteq L^2(\mu_0)$ is determined by the asymptotic behavior 
of $\alpha_{\nu,j}$ as $\nu \to \infty$.
See, e.g., \citet[Sec.~3.1]{GHHR2021} and
\citet[Sec.~2]{LPE22} for details and further references.

For finite tensor products 
$H(k_1) \otimes \dots \otimes H(k_d)$ integration and 
$L^2$-approximation with respect to the
$d$-dimensional standard normal distribution
$\mu_0 \otimes \dots \otimes \mu_0$ have 
been studied by 
\citet{IL15}, who have distinguished between
two asymptotic regimes for the Fourier weights, namely
one of polynomial and one of (sub-)exponential growth 
as $\nu \to \infty$.
The former case is further addressed in
\citet{DILP18}, \citet{KSG22}, \citet{DungNguyen22}, 
and \citet{LPE22}, while the
latter is further studied in
\citet{IKLP15}, \citet{IKPW16a}, and \citet{IKPW16b}.

In the present paper we consider the infinite tensor product
\[
H(K) \simeq \bigotimes_{j \in \N} H(k_j)
\]
as well as the infinite product $\mu := \bigotimes_{j \in \N} \mu_0$ 
on the space $\R^\N$. The tensor product kernel $K$,
given by
\[
\phantom{\qquad\quad \bx, \by \in \X,}
K(\bx,\by) := \prod_{j \in \N} k_j(x_j,y_j),
\qquad\quad \bx, \by \in \X,
\]
on the maximal domain
\begin{equation}\label{g23}
\X :=  \Bigl\{ \bx \in \R^\N \colon 
\sum_{\nu,j \in \N} \alpha_{\nu,j}^{-1} \cdot h^2_\nu (x_j) < \infty 
\Bigr\},
\end{equation}
will be called a \emph{Hermite kernel}, too, and $H(K)$,
which is a space of real-valued functions with domain $\X$, will
be called a \emph{Hermite space}, too.
In order to
analyze both of the asymptotic regimes for the Fourier weights in
parallel, we introduce the summability and monotonicity assumptions 
\begin{itemize}
\label{a123}
\item[(\ha)]
$0 < \alpha_{1,j} \leq \alpha_{2,j} \leq \dots$
for every $j \in \N$,
\item[(\hb)]
$\sum_{\nu,j \in \N} \alpha_{\nu,j}^{-1} < \infty$,
\item[(\hc)]
$\sum_{j \in \N} \gamma_j < \infty$, 
where $\gamma_j := \sup_{\nu \in \N} \alpha_{\nu,1}/\alpha_{\nu,j}$.
\end{itemize}

We employ the unrestricted subspace 
sampling model, which has been introduced in
\citet{KuoEtAl10}, to
define the cost of algorithms for computational problems on spaces
of functions of infinitely many variables. 
The key quantity in our analysis is
the $n$-th minimal error $\err_n(K)$
for integration or $L^2$-approximation, which is, roughly speaking,
the smallest worst-case error on the unit ball in $H(K)$
that can be achieved by any deterministic algorithm with cost at most 
$n \in \N$. 

For a sequence $\bz$ of positive real numbers we use 
$\decay(\bz) \in [0,\infty]$ to denote its polynomial order of 
convergence towards zero, 
see \eqref{g90} for the precise definition.
Our two main results deal with the decay of the $n$-th
minimal errors $\dec(K) := \decay (\err_n(K)_{n \in \N})$.
Let $\ba_1^{-1} := (\alpha_{1,j}^{-1})_{j\in \N}$ and 
$\bg := (\gamma_j)_{j \in \N}$, and let $\dec(k_1)$ denote
the decay of the $n$-th minimal errors for the 
univariate integration or $L^2$-approximation problem on the unit ball 
in $H(k_1)$. Under the assumptions (\ha)--(\hc) we obtain 
\begin{equation}\label{g24}
\min \left( \dec (k_1), \frac{\decay(\bg) - 1}{2} \right)
\le \dec (K) \le 
\min \left( \dec (k_1), \frac{\decay(\ba_1^{-1}) - 1}{2} \right)
\end{equation}
for integration and for $L^2$-approximation, see Theorem~\ref{t3}.
Moreover, we show that for Fourier weights
with a polynomial and with a (sub-)exponential growth
the upper and lower bound in \eqref{g24} match, see Corollary~\ref{t1}.

Let us put the function space setting and the results from the present 
paper into a general perspective by considering an arbitrary probability
measure $\mu_0$ on a domain $D$ as well as any orthonormal basis
$(h_\nu)_{\nu \in \N_0}$ of $L^2(\mu_0)$ with $h_0 = 1$.
Under suitable summability and monotonicity assumptions on the Fourier 
weights, the definition \eqref{g22}, with $D$ in place of $\R$,
yields a sequence of reproducing kernels $k_j$ on the domain $D$
with $H(k_j) \subseteq L^2(\mu_0)$.
Furthermore, the maximal domain $\X$ of the corresponding tensor product
kernel $K$ is given by \eqref{g23}, again with $D$ in place of $\R$.

In this general setting, integration and $L^2$-approximation with respect
to $\mu := \bigotimes_{j \in \N} \mu_0$ are studied in \citet{GHHRW2020}, 
and \eqref{g24} is established under the assumption that 
\begin{equation}\label{g25}
\X = D^\N,
\end{equation}
see \citet[Thm.~4.3]{GHHRW2020}. Moreover, matching bounds for
$\dec(K)$ are derived from \eqref{g24} for various kinds of spaces 
$H(k_j)$ on compact domains $D$, e.g., for
Korobov, Walsh, Haar, and Sobolev spaces. 

In the general setting and given \eqref{g25},
the method of proof for the lower bound in \eqref{g24}, i.e.,
for an upper bound for $\err_n(K)$, is to embed $H(K)$ into
a particular RKHS $H(M^\up_a)$ 
whose reproducing kernel is 
a tensor product involving weighted anchored kernels.
More precisely,
for a fixed $a \in D$ and with a suitable reproducing kernel $m_a
\colon D \times D \to \R$ such that 
\begin{equation}\label{g86}
m_a^{\up}(a,a) = 0
\end{equation}
we consider
\begin{equation}\label{g87}
\phantom{\qquad\quad \bx, \by \in D^\N.}
M^\up_a(\bx,\by) := \prod_{j \in \N} (1 + 
\gamma_j \cdot m_a^\up(x_j,y_j)),
\qquad\quad \bx, \by \in D^\N.
\end{equation}
The structure of $M^\up_a$ is most 
favorable for the construction and analysis of deterministic algorithms,
since the multilevel method and the multivariate decomposition method 
(MDM) are available as powerful meta-algorithms for this type 
of RKHS. 
Multilevel methods on spaces $H(M_a^\up)$ with \eqref{g86} and \eqref{g87}
have first been considered in \citet{NHMR2011}; see, e.g.,
\citet{Gne10}, 
\citet{Gne12a},  
\citet{DG12}, 
\citet{GneEtAl16}
for subsequent work. 
MDMs on spaces $H(M_a^\up)$ with \eqref{g86} and \eqref{g87}
have first been considered in \citet{KuoEtAl10};
see, e.g.,
\citet{MR2805529},
\citet{PW11},
\citet{Was12},
\citet{Gne12a},
\citet{DG12},
\citet{Was13},
\citet{KuoEtAl2017}
for subsequent work. 
For multivariate integration 
the underlying idea of MDMs was used for the first time
in \cite{GH2010}; there the corresponding 
algorithms were baptized dimension-wise quadrature methods.

Since $H(K) \subseteq H(M_a^\up)$ with a continuous identical
embedding, lower bounds for $\dec(M_a^\up)$ yield
lower bounds for $\dec(K)$.
The upper bound in \eqref{g24}, which
corresponds to a lower bound for $\err_n(K)$, is established via an
embedding into some suitable reproducing kernel Hilbert space
$H(M_a^\lo)$, too.
We stress that the scales of spaces $H(k_j)$ and $H(1+\gamma_j
\cdot m^{\up}_a)$ are rather different in the following sense:
While $H(1+\gamma_j \cdot m^{\up}_a)$, as a vector space, does not depend
on $j$, we typically have a compact embedding of $H(k_j)$ into
$H(k_i)$ if $j > i$.

Let us return to the setting of Hermite spaces. 
Since all functions $h_\nu$ with $\nu \in \N$ are unbounded on 
their domain $D=\R$,
condition \eqref{g25} is never satisfied. Consequently,
Hermite spaces $H(K)$ are never spaces of functions on the whole 
domain $\R^\N$. However, we have $\mu(\X)=1$ for the maximal
domain $\X$ of $K$ and $H(K)
\subseteq L^2(\mu)$ in the Hermite case under suitable assumptions 
on the Fourier weights, in particular, if (\ha) and (\hb) are satisfied. 
See \citet[Sec.~3.2]{GHHR2021} for details. 
Hermite spaces fit into the abstract setting developed in
\citet{GriebelOswald2017}; the study of natural domains of 
functions of infinitely many variables has not been the main
focus there.

In the present paper we
adapt the embedding approach from \citet{GHHRW2020} to cope with an
RKHS on a proper subset of $\R^\N$. 
We add that in this new setting the corresponding kernels 
$M^\up_a$ and
$M^\lo_a$ have, in general, domains different from the one of $K$.
Hence the corresponding embeddings 
$H(M^\lo_a) \hookrightarrow H(K) \hookrightarrow H(M^\up_a)$
turn out to be typically not identical embeddings but restrictions.
Indeed, in the proof of
the upper bound for the minimal errors we may end up with 
$\X^\up \subsetneq \X$ for the maximal domain $\X^\up$ of
$M_a^\up$, but still $\mu(\X^\up)=1$, while in the 
proof of the lower bound we may 
end up with a domain that is strictly larger than $\X$, 
but still not the whole space $\R^\N$.
The embedding of $H(K)$ into $H(M_a^\up)$ allows to
apply MDMs also on Hermite spaces and, in this way, to
achieve the lower bound in \eqref{g24}.
We stress that we have explicit upper bounds for the norms
of all embedding and restriction maps relevant in this context.

Consider again a general domains $D$ endowed with an arbitrary 
probability measures $\mu_0$. We want to point out that the approach 
developed in this paper can also be used to treat tensor
product kernels $K$ based on an arbitrary orthonormal basis 
$(h_\nu)_{\nu\in\N_0}$ of $L^2(\mu_0)$, consisting of not 
necessarily bounded functions, as long as $h_0=1$ and
suitable upper bounds for the univariate functions 
$h_\nu (x)$ for every $\nu\in \N$
and $x\in D$, similar to the bounds 
for Hermite polynomials listed in 
\citet[Lemma~3.1 and 3.2]{GHHR2021}, are available.

Furthermore, we want to emphasize that Hermite spaces of functions 
depending on one, several, or infinitely many variables are not 
only interesting in their own right, but are intimately related to 
Hilbert spaces of Gaussian kernels. More precisely, for each Hilbert 
space of Gaussian kernels there exists a corresponding Hermite space 
with Fourier weights of exponential growth and a linear isometric 
isomorphism between both spaces which respects point evaluations. 
For more details see \cite{GHHR2021} and \citet{GHRR2022}. 

This paper is organized as follows.
In Section~\ref{s2} we 
briefly discuss Hermite spaces with a focus on Fourier
weights with a polynomial or a (sub-)exponential growth.
The RKHS 
$H(M^\up_a)$ and its counterpart for the lower bound are introduced 
and related to $H(K)$ as well as $L^2(\mu)$ in Section~\ref{s3}.
Section~\ref{s4} is devoted to the study of integration and 
$L^2$-approximation on Hermite spaces. 
Univariate Hermite spaces of finite smoothness are considered
in Section~\ref{s4.1},
and the results from Sections~\ref{s3} and
\ref{s4.1} are combined in Section~\ref{s4.2} to derive the main
results of this paper on integration and $L^2$-approximation
on Hermite spaces $H(K)$ of functions of infinitely 
many variables, which are stated in Theorem~\ref{t3}
and Corollary~\ref{t1}.
The construction of MDMs on $H(K)$, which yield the
lower bound in \eqref{g24}, is studied in
Section~\ref{s4.3}. We establish some technical
results in an appendix: Incomplete tensor products of Hilbert spaces are 
used in Appendix~\ref{a1} to study restriction operators
on RKHSs with tensor product kernels, and a monotonicity property
of univariate Hermite kernels is established in Appendix~\ref{a2}.

\section{Hermite Spaces}\label{s2}

We consider Fourier weights $\alpha_{\nu,j}$ for $\nu,j \in \N$ with the
properties (\ha)--(\hc), see p.~\pageref{a123}.
The properties (\ha) and (\hb), together with $\alpha_{0,j} := 1$,
ensure that  (H1), (H2), and (H3) in 
\citet{GHHR2021} are satisfied, so that the results from 
\citet[Sec.~2, 3]{GHHR2021} are 
applicable to the Hermite spaces $H(k_j)$ of functions of a single
variable with reproducing kernels
\[
\phantom{\qquad\quad x,y \in \R,}
k_j (x,y) := 1 + \sum_{\nu\in \N} 
\alpha_{\nu,j}^{-1} \cdot h_\nu (x) \cdot h_\nu (y),
\qquad\quad x,y \in \R,
\]
and to the Hermite space $H(K)$ of functions of infinitely many
variables with the tensor product kernel
\[
\phantom{\qquad\quad \bx,\by \in \X,}
K(\bx,\by) := \prod_{j \in \N} k_j(x_j,y_j),
\qquad\quad \bx,\by \in \X,
\]
on the maximal domain 
\[
\X :=  
\Bigl\{ \bx \in \R^\N \colon  \sum_{\nu,j \in \N} 
\alpha_{\nu,j}^{-1} \cdot h^2_\nu (x_j) < \infty \Bigr\}.
\]
In particular, $\ell^\infty(\N) \subsetneq \X \subsetneq \R^\N$, 
where $\ell^\infty(\N)$ denotes the space of all bounded
sequences of real numbers,
but $\mu(\X)=1$ and $H(K) \subseteq L^2(\mu)$. See 
\citet[Lem.~3.8, Prop.~3.10, and Lem.~3.12]{GHHR2021}.

Concerning (\hc), we note that 
$H(k_j)$ is the orthogonal sum of the space $H(1)$ of
constant functions and the space
\[
H(k_j-1) = 
\Bigl\{ f \in H(k_j) \colon \int_{\R} f \, d \mu_0 = 0 \Bigr\}.
\]
It follows that $\gamma_j^{1/2}$ is the norm of the identical
embedding of $H(k_j-1)$ into the space $H(k_1-1)$.

\subsection{Two Particular Cases}\label{s2.1}

In the sequel we discuss two particular kinds of Fourier weights:

\begin{itemize}
\item[(PG)]
Let $r_j > 1$ for $j \in \N$ such that
\begin{equation}\label{g1}
\sum_{j \in \N} 2^{-r_j} < \infty
\end{equation}
and
\begin{equation}\label{g1a}
r_1 = \inf_{j \in \N} r_j.
\end{equation}
The Fourier weights with a \emph{polynomial growth} are given by 
\[
\alpha_{\nu,j} := (\nu+1)^{r_j}
\]
for $\nu,j \in \N$.
\item[(EG)]
Let $r_j,b_j > 0$ for $j \in \N$ such that \eqref{g1} 
and \eqref{g1a} are
satisfied and
\begin{equation}\label{g1b}
b_1 = \inf_{j \in \N} b_j. 
\end{equation}
The Fourier weights with a \emph{(sub-)exponential growth} are given by 
\[
\alpha_{\nu,j} := 2^{r_j \cdot \nu^{b_j}}
\]
for $\nu,j \in \N$.
\end{itemize}

\begin{lemma}\label{l10}
In both cases, {\rm (PG)} and {\rm (EG)} we have 
{\rm(\ha)}, {\rm(\hb)}, and {\rm(\hc)} with
\[
\gamma_j = 2^{r_1-r_j}
\]
for every $j \in \N$.
\end{lemma}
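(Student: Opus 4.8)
The plan is to verify the three conditions \ha, \hb, \hc\ directly from the explicit formulas for $\alpha_{\nu,j}$ in each case, using the standing hypotheses \eqref{g1}, \eqref{g1a} (and \eqref{g1b} in the (EG) case), and along the way to read off the value of $\gamma_j$. The verification of \ha\ is immediate: in case (PG) the map $\nu \mapsto (\nu+1)^{r_j}$ is increasing since $r_j > 0$, and in case (EG) the map $\nu \mapsto 2^{r_j \nu^{b_j}}$ is increasing since $r_j, b_j > 0$; in both cases the weights are positive, so $0 < \alpha_{1,j} \le \alpha_{2,j} \le \dots$ holds for every $j$.

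For \hc\ I would compute $\gamma_j = \sup_{\nu \in \N} \alpha_{\nu,1}/\alpha_{\nu,j}$ explicitly. In case (PG),
\[
\frac{\alpha_{\nu,1}}{\alpha_{\nu,j}} = (\nu+1)^{r_1 - r_j},
\]
and since $r_1 \le r_j$ by \eqref{g1a} the exponent $r_1 - r_j$ is nonpositive, so the supremum over $\nu \in \N$ is attained at $\nu = 1$, giving $\gamma_j = 2^{r_1 - r_j}$. In case (EG),
\[
\frac{\alpha_{\nu,1}}{\alpha_{\nu,j}} = 2^{r_1 \nu^{b_1} - r_j \nu^{b_j}};
\]
here I would argue that the exponent $r_1 \nu^{b_1} - r_j \nu^{b_j}$ is maximized over $\nu \in \N$ at $\nu = 1$. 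Since $b_1 \le b_j$ by \eqref{g1b} and $r_1 \le r_j$ by \eqref{g1a}, for $\nu \ge 1$ we have $\nu^{b_1} \le \nu^{b_j}$, hence $r_1 \nu^{b_1} \le r_j \nu^{b_1} \le r_j \nu^{b_j}$ when $\nu \ge 1$ — wait, more carefully: $r_1\nu^{b_1} - r_j\nu^{b_j} \le r_1\nu^{b_1} - r_1\nu^{b_j} = r_1(\nu^{b_1}-\nu^{b_j}) \le 0$ for $\nu\ge 1$, with equality at $\nu=1$; and at $\nu=1$ the exponent equals $r_1 - r_j$. Thus again $\gamma_j = 2^{r_1 - r_j}$. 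This monotonicity-of-the-exponent argument is the one genuinely non-mechanical step, but it reduces to the elementary fact that $\nu \mapsto \nu^{b_1} - \nu^{b_j}$ is nonpositive on $[1,\infty)$ when $b_1 \le b_j$.

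Finally, for \hb\ and for the summability $\sum_j \gamma_j < \infty$ required in \hc, I would bound the double sum $\sum_{\nu,j} \alpha_{\nu,j}^{-1}$ by splitting off the dependence on $j$. In case (PG), $\sum_\nu (\nu+1)^{-r_j} \le \sum_\nu (\nu+1)^{-r_1} =: C_1 < \infty$ since $r_1 > 1$, and moreover $\sum_\nu (\nu+1)^{-r_j} \le 2^{-r_j} \cdot \sum_\nu ((\nu+1)/2)^{-r_j} \le 2^{-r_j} C_1$ after comparing $((\nu+1)/2)^{-r_j} \le (\nu+1)^{-r_1}$ for $\nu \ge 1$ — I would make the clean estimate $\sum_{\nu\in\N}(\nu+1)^{-r_j} \le 2^{-r_j}\sum_{\nu\in\N}\bigl(\tfrac{\nu+1}{2}\bigr)^{-r_1} = 2^{-r_j}\,2^{r_1}\sum_{\nu\in\N}(\nu+1)^{-r_1}$, which is $\le 2^{r_1} C_1 \cdot 2^{-r_j}$, so $\sum_{\nu,j}\alpha_{\nu,j}^{-1} \le 2^{r_1}C_1 \sum_j 2^{-r_j} < \infty$ by \eqref{g1}; and $\sum_j \gamma_j = 2^{r_1}\sum_j 2^{-r_j} < \infty$ likewise. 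In case (EG), since $b_j, r_j > 0$ one has $2^{r_j\nu^{b_j}} \ge 2^{r_j \nu^{b_1}}$ and, using $\nu^{b_1} \ge c_{b_1}\nu$ fails in general for $b_1<1$, so instead I would note $2^{r_j\nu^{b_j}}\ge 2^{r_j}\cdot 2^{r_j(\nu^{b_j}-1)}$ and bound $\sum_\nu 2^{-r_j(\nu^{b_j}-1)}$ by a $j$-independent constant $C_2$ (using $\nu^{b_j}-1 \ge \nu^{b_1}-1$ and that $\sum_\nu 2^{-r_1(\nu^{b_1}-1)}<\infty$ because $r_1, b_1>0$), giving $\alpha_{\nu,j}^{-1}\le 2^{-r_j}\cdot 2^{-r_1(\nu^{b_1}-1)}$ after also using $r_j\ge r_1$, hence $\sum_{\nu,j}\alpha_{\nu,j}^{-1} \le C_2 \sum_j 2^{-r_j}<\infty$ and $\sum_j\gamma_j = 2^{r_1}\sum_j 2^{-r_j}<\infty$.

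The main obstacle is the (EG) summability estimate in \hb: one must extract a factor $2^{-r_j}$ uniformly in $\nu$ while keeping the remaining $\nu$-sum bounded by a constant independent of $j$, which is where the hypotheses $r_1 = \inf_j r_j$ and $b_1 = \inf_j b_j$ are used simultaneously. Once the exponent is written as $-r_j\nu^{b_j} \le -r_j - r_1(\nu^{b_1}-1)$ for $\nu \ge 1$ (valid since $r_j\nu^{b_j} = r_j + r_j(\nu^{b_j}-1) \ge r_1 + r_1(\nu^{b_1}-1)$, using $r_j\ge r_1$, $\nu^{b_j}\ge\nu^{b_1}$, and $\nu^{b_j}-1\ge 0$), everything follows from \eqref{g1}.

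$\qed$
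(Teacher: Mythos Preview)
Your plan is mostly sound and in places more self-contained than the paper (for (\hb) the paper simply cites \citet[Lem.~3.17]{GHHR2021}, whereas you give an elementary explicit estimate). But there is a genuine gap in your computation of $\gamma_j$ in the (EG) case.

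You want to show that the exponent $E(\nu) := r_1\nu^{b_1} - r_j\nu^{b_j}$ is maximized at $\nu=1$, i.e., that $E(\nu) \le r_1 - r_j$ for all $\nu \ge 1$. What you actually prove is
\[
E(\nu) \;\le\; r_1\nu^{b_1} - r_1\nu^{b_j} \;=\; r_1(\nu^{b_1}-\nu^{b_j}) \;\le\; 0,
\]
which only yields $\gamma_j \le 1$, not $\gamma_j \le 2^{r_1-r_j}$. The phrase ``with equality at $\nu=1$'' refers to equality in the bound $r_1(\nu^{b_1}-\nu^{b_j})\le 0$, but $E(1)=r_1-r_j$, not $0$, so this does not pin down the supremum. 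Ironically, the inequality you write down later for (\hb), namely $r_j\nu^{b_j} = r_j + r_j(\nu^{b_j}-1) \ge r_j + r_1(\nu^{b_1}-1)$, is exactly what is needed: it gives $E(\nu) \le r_1\nu^{b_1} - r_j - r_1(\nu^{b_1}-1) = r_1 - r_j$. The paper takes a slightly different route, bounding $E(\nu) \le (r_1-r_j)\nu^{b_1}$ via $\nu^{b_j}\ge\nu^{b_1}$ and then using $r_1-r_j\le 0$ to see that the supremum over $\nu\ge 1$ is $r_1-r_j$. Either fix works; your current argument as written does not.
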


\begin{proof}
Obviously, (\ha) is satisfied and
$\sum_{\nu \in \N} \alpha_{\nu,j}^{-1} < \infty$ for every $j \in \N$. 
As shown in \citet[Lem.~3.17]{GHHR2021},
there exists an integer $j_0 \in \N$ such that
\[
\sum_{\nu \in \N,\ j \geq j_0} \alpha_{\nu,j}^{-1} < \infty.
\]
Hence we have (\hb).
In the case (PG) we obtain 
\[
\gamma_j = \sup_{\nu \in \N} (\nu+1)^{r_1-r_j} = 2^{r_1-r_j}
\]
for every $j \in \N$ from \eqref{g1a},
so that 
(\hc) follows from \eqref{g1} in this case. In the sequel
we consider the case (EG), where we have
\[
\gamma_j
 = \sup_{\nu \in \N} 2^{r_1 \cdot \nu^{b_1} - r_j \cdot \nu^{b_j}}.
\]
Using \eqref{g1a} and \eqref{g1b} we obtain
\[
2^{r_1-r_j} \leq \gamma_j
\leq
\sup_{\nu \in \N} 2^{(r_1-r_j) \cdot \nu^{b_1} } = 2^{r_1-r_j}
\]
for every $j \in \N$, and
(\hc) follows from \eqref{g1} also in the case (EG).
\end{proof}

\begin{rem}\label{r2}
We refer to \citet[Prop.~3.10, 3.18, and 3.19]{GHHR2021} for results on 
the maximal domain $\X$ in the cases (PG) and (EG); in particular, 
\[
\X =  
\Bigl\{ \bx \in \R^\N \colon  \sum_{j \in \N} 
2^{-r_j} \cdot x_j^2 < \infty \Bigr\}
\]
in the case (EG) with $b_1 \geq 1$.
\end{rem}

\begin{rem}\label{r1}
For $i,j \in \N$
the inclusion $H(k_j) \subsetneq H(k_i)$ with a compact identical
embedding is equivalent to
\begin{equation}\label{g4}
\lim_{\nu \to \infty} \frac{\alpha_{\nu,i}}{\alpha_{\nu,j}} = 0.
\end{equation}
In the case (PG) we have \eqref{g4} if and only if $r_i < r_j$;
observe that the latter holds true for a fixed $i \in \N$
if $j$ is sufficiently large.
In the case (EG) we have \eqref{g4} if and only if $b_i < b_j$ or
$b_i = b_j$ and $r_i < r_j$;
observe that the latter holds true for $i = 1$
if $j$ is sufficiently large. This setting of compact identical embeddings is also known as \emph{increasing smoothness}, see, e.g., \citet[p.~1-2]{GHHRW2020} for further details and references.
\end{rem}

\begin{rem}
Roughly speaking, the case {\rm (PG)} corresponds to spaces
$H(k_j)$ of functions of finite smoothness, 
while the case {\rm (EG)} corresponds to an infinite smoothness. 

More precisely, in the case {\rm (PG)} with the additional assumption 
$r_j\in\N$, the space $H(k_j)$ is, up to equivalence of norms, the Sobolev
space of all continuous functions in $L^2(\mu_0)$ with weak
derivatives of order up to $r_j$ belonging to $L^2(\mu_0)$.
See \citet[Exmp.~3.5]{GHHR2021} and \citet[Sec.~2]{LPE22} 
for further details and references. 

In the case {\rm (EG)}, the elements of $H(k_j)$ are real analytic 
if $b_j \geq 1$, and they belong to the Gevrey class of index 
$(2b_j)^{-1}$ if $0 < b_j < 1$.
See \citet[Exmp.~3.6, Lem.~3.7]{GHHR2021} for further details and 
references.
\end{rem}

\begin{rem}\label{r5}
Let $r_j > 1/2$ for $j \in \N$ such that \eqref{g1} is
satisfied.
The case (PG) is studied 
in \citet{GHHR2021} under these weaker assumptions, which
still guarantee (\ha), but (\hb) 
is equivalent to $r_j > 1$ for every $j \in \N$,
and (\hc) is equivalent to \eqref{g1a}.
The approach from the present paper may be generalized 
in the following way to also work under the weaker assumptions.

The reproducing kernel $k_j$ is well defined already if 
\[
\sum_{\nu \in \N} \alpha_{\nu,j}^{-1} \cdot \nu^{-1/2} < \infty,
\]
see \citet[Sec.~3.1]{GHHR2021},
and the latter is equivalent to $r_j > 1/2$. 
Due to \eqref{g1} we may
choose $j_0 \in \N$ such that $r_j > 1$ for $j \geq j_0$,
and, possibly after reordering the kernels, we have $r_{j_0} =
\inf_{j \geq j_0} r_j$.
Hence we may study
$H(k_1) \otimes \dots \otimes H(k_{j_0-1}) \otimes H(K^\prime)$,
where $K^\prime$ is the tensor product of the reproducing kernels 
$k_j$ with $j \geq j_0$. For the analysis of $H(K^\prime)$ we then
use $H(k_{j_0})$ instead of $H(k_1)$ as the reference space, for 
instance in the definition of $\gamma_j$ for $j \geq j_0$.
\end{rem}

\begin{rem}\label{r6}
Let $r_j,b_j > 0$ for $j \in \N$ such that \eqref{g1} and
$b_0 := \inf_{j \in \N} b_j > 0$ are satisfied.
The case (EG) is studied 
in \citet{GHHR2021} under these weaker assumptions, which
neither affect (\ha) nor (\hb), but (\hc) is equivalent to
$b_0 = b_1$ and $r_1  \leq r_j$ for every $j \in \N$ with $b_1=b_j$.
Let $r_0 := \inf_{j \in \N} r_j$.
The approach from the present paper may be generalized to also work 
under the weaker assumptions
by studying $H(k_0) \otimes H(K)$, where $k_0$ is defined by means
of the Fourier weights 
$\alpha_{\nu,0} := 2^{ r_0 \cdot \nu^{b_0}}$ for $\nu \in \N$ 
and $\alpha_{0,0} := 1$.
In the analysis we then use $H(k_0)$ instead of $H(k_1)$ as the
reference space.
\end{rem}

Finite tensor products of Hermite spaces with different kinds
of Fourier weights have been studied, e.g., in the following
papers on integration and approximation problems.
Fourier weights with a polynomial growth are considered in 
\citet{IL15}, \citet{DILP18}, \citet{KSG22},
\citet{DungNguyen22}, and \citet{LPE22}.
For Fourier weights with a (sub-)exponential growth we refer to
\citet{IL15}, \citet{IKLP15}, \citet{IKPW16a}, and \citet{IKPW16b}.

\section{Embeddings}\label{s3}

In the sequel we consider Fourier weights $\alpha_{\nu,j}$
that satisfy (\ha)--(\hc). We adapt the approach and some results from 
\citet{GneEtAl16,GHHRW2020} to the case of Hermite spaces; 
the main difference to the previous work is caused by the fact
that the maximal domain $\X$ for a Hermite space $H(K)$ of functions of 
infinitely many variables is always a proper subset of the
sequence space $\R^\N$.

\subsection{The Embedding for the Upper Bounds}\label{s3.1}

To establish upper bounds for worst-case errors 
for integration and $L^2$-approximation on the unit ball
in $H(K)$ we will introduce reproducing kernels $M^\up_a$ with more
favorable structural properties than $K$ in Theorem~\ref{t2}. 
It will turn out that
the maximal domain of each of the kernels $M_a^\up$ with $a \in \R$
is equal to
\[
\X^\up := 
\Bigl\{ \bx \in \R^\N \colon 
\sum_{\nu,j \in\N} \gamma_j \alpha_{\nu,1}^{-1} \cdot h_\nu^2(x_j)
< \infty \Bigr\}.
\]

\begin{lemma}\label{l22}
We have $\ell^\infty(\N) \subsetneq \X^\up \subseteq \X$ and 
$\mu(\X^\up)=1$.
\end{lemma}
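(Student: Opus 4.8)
The plan is to establish the three claims $\ell^\infty(\N) \subsetneq \X^\up$, $\X^\up \subseteq \X$, and $\mu(\X^\up) = 1$ in turn, exploiting the uniform bounds on Hermite polynomials from \citet[Lem.~3.1 and 3.2]{GHHR2021} together with the summability assumptions (\ha)--(\hc). Note first that, by definition of $\gamma_j$ in (\hc), we have $\gamma_j \alpha_{\nu,1}^{-1} = \gamma_j \alpha_{\nu,1}^{-1}$ and, since $\gamma_j = \sup_{\mu\in\N}\alpha_{\mu,1}/\alpha_{\mu,j}$, the pointwise inequality $\gamma_j \alpha_{\nu,1}^{-1} \geq \alpha_{\nu,j}^{-1}$ for every $\nu,j \in \N$. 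This immediately gives the inclusion $\X^\up \subseteq \X$: if $\bx \in \X^\up$ then $\sum_{\nu,j} \alpha_{\nu,j}^{-1} h_\nu^2(x_j) \leq \sum_{\nu,j} \gamma_j \alpha_{\nu,1}^{-1} h_\nu^2(x_j) < \infty$, so $\bx \in \X$. That this inclusion can be strict is parallel to the strictness $\ell^\infty(\N) \subsetneq \X$ recorded for $K$ itself, and one may either cite the analogous reasoning or remark that the defining series for $\X^\up$ is strictly ``smaller'' than that for $\X$ on suitable sequences.

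For $\ell^\infty(\N) \subsetneq \X^\up$: given a bounded sequence $\bx$ with $|x_j| \leq c$ for all $j$, use the uniform bound $h_\nu^2(x) \leq C \nu^{-1/6}$ (or the cruder polynomial-in-$\nu$ bound on compact intervals) valid for $|x|\leq c$ from \citet[Lem.~3.1]{GHHR2021}, so that $\sum_{\nu,j} \gamma_j \alpha_{\nu,1}^{-1} h_\nu^2(x_j) \leq C' \bigl(\sum_j \gamma_j\bigr)\bigl(\sum_\nu \alpha_{\nu,1}^{-1}\bigr)$, which is finite by (\hc) and by (\hb) applied with $j=1$ (equivalently $\sum_\nu \alpha_{\nu,1}^{-1} < \infty$, which follows from (\ha) and (\hb)). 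Strictness follows because $\X^\up$ contains genuinely unbounded sequences: one can, as in the treatment of $\X$, choose $x_j \to \infty$ slowly enough (say $x_j$ growing like a small power of $\log j$, with at most one nonzero coordinate contributing at each scale) that the tail of the double series still converges.

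The main obstacle — and the heart of the lemma — is $\mu(\X^\up) = 1$, i.e.\ that the complement is $\mu$-null. The natural route is to show that the nonnegative function $\bx \mapsto \sum_{\nu,j} \gamma_j \alpha_{\nu,1}^{-1} h_\nu^2(x_j)$ is $\mu$-integrable, hence finite $\mu$-a.e. By Tonelli's theorem and the fact that $\mu = \bigotimes_j \mu_0$ with each $h_\nu$ orthonormal in $L^2(\mu_0)$, we have $\int_\X h_\nu^2(x_j)\, d\mu(\bx) = \int_\R h_\nu^2\, d\mu_0 = 1$ for every $\nu, j$. Therefore
\[
\int \sum_{\nu,j \in \N} \gamma_j \alpha_{\nu,1}^{-1} h_\nu^2(x_j)\, d\mu(\bx)
= \sum_{\nu,j \in \N} \gamma_j \alpha_{\nu,1}^{-1}
= \Bigl(\sum_{j\in\N}\gamma_j\Bigr)\Bigl(\sum_{\nu\in\N}\alpha_{\nu,1}^{-1}\Bigr) < \infty,
\]
using (\hc) for the first factor and (\ha)+(\hb) for the second. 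An integrable function is finite almost everywhere, so $\mu(\X^\up) = 1$. The only subtlety is a measurability remark — that the defining set $\X^\up$ is measurable, which follows since it is the set where a countable sum of nonnegative measurable functions is finite — and checking that the interchange of sum and integral is justified, which is immediate from Tonelli since all terms are nonnegative. This mirrors the corresponding statement $\mu(\X)=1$ from \citet[Prop.~3.10]{GHHR2021}, the difference being only the bookkeeping with $\gamma_j \alpha_{\nu,1}^{-1}$ in place of $\alpha_{\nu,j}^{-1}$.
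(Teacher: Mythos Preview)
Your proof is correct and follows essentially the same route as the paper: the inclusion $\X^\up \subseteq \X$ via $\gamma_j \alpha_{\nu,1}^{-1} \geq \alpha_{\nu,j}^{-1}$, the full-measure claim via Tonelli and the factorization $(\sum_j \gamma_j)(\sum_\nu \alpha_{\nu,1}^{-1}) < \infty$, and the inclusion $\ell^\infty(\N) \subseteq \X^\up$ via a uniform bound on Hermite polynomials (the paper invokes Cram\'er's inequality $h_\nu^2(x) \leq \exp(x^2/2)$ directly rather than the $\nu^{-1/6}$ bound from \citet[Lem.~3.1]{GHHR2021}, but either works). The only noteworthy difference is the strictness $\ell^\infty(\N) \neq \X^\up$: instead of constructing an explicit unbounded element of $\X^\up$ as you sketch, the paper simply observes that $\mu(\ell^\infty(\N)) = 0$ while $\mu(\X^\up) = 1$, which is immediate once the measure computation is done and avoids any construction.
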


\begin{proof}
Since 
\begin{equation}\label{g56}
\frac{\alpha_{\nu,1}}{\gamma_j} \leq \alpha_{\nu,j}
\end{equation}
by definition of $\gamma_j$, we obtain $\X^\up \subseteq \X$.
Using (\hb) and (\hc) we obtain
\[
\int_{\R^\N} 
\sum_{\nu,j \in\N} \gamma_j \alpha_{\nu,1}^{-1} \cdot h_\nu^2(x_j)
\, d \mu (\bx)
=
\sum_{j \in\N} \gamma_j \cdot
\sum_{\nu \in\N} \alpha_{\nu,1}^{-1} < \infty, 
\] 
and therefore $\mu(\X^\up) = 1$. 
For every $\bx \in \ell^\infty(\N)$ we use Cram\'er's inequality to obtain
\[
\sup_{\nu,j \in \N} h^2_\nu(x_j) \leq 
\sup_{j \in \N} \exp(x_j^2/2) < \infty,
\]
which implies $\bx \in \X^\up$. Since $\mu(\ell^\infty(\N))=0$,
we have $\ell^\infty(\N) \neq \X^\up$.
\end{proof}

For $a \in \R$ we put
\[
c^\up(a) := 1 + k_1(a,a).
\]

\begin{theo}\label{t2}
For every $a \in \R$ there exists a reproducing kernel 
\[
m_a^\up \colon \R \times \R \to \R
\]
with the following properties: 
\begin{itemize}
\item[(i)]
We have 
\[
H(k_1) = H(1+m_a^\up)
\]
as vector spaces, and 
\[
m_a^\up(a,a) = 0.
\]
The operator norms of the identical embeddings
$T_j \colon H(k_j) \hookrightarrow H(1+ \gamma_j m_a^\up)$ 
and $V \colon H(m_a^\up) \hookrightarrow H(k_1)$ satisfy 
\[
\| T_j \| \le \left( 1 + \gamma_j c^{\up}(a) \right)^{1/2}
\]
for every $j \in \N$ as well as
\[
\| V \| \le c^{\up}(a).
\]
\item[(ii)]
The maximal domain of the reproducing kernel 
\begin{equation}\label{eq:kernel_M_a_up}
M^\up_a (\bx,\by) := \prod_{j \in \N} (1+ \gamma_j \cdot m_a^\up(x_j,y_j))
\end{equation}
is equal to $\X^\up$.
\item[(iii)]
We have
\[
\{ f|_{\X^\up} \colon f \in H(K) \} \subseteq H(M^\up_a) 
\subseteq L^2(\mu),
\]
and the operator norm of the restriction
$T \colon H(K) \to H(M^\up_a)$, $f \mapsto f|_{\X^\up}$ satisfies
\begin{equation}\label{est:bound_op_norm_restriction}
\|T \| \le C^\up (a) 
:= \prod_{j\in \N} (1+ \gamma_j \cdot c^\up(a) )^{1/2}.
\end{equation}
\end{itemize}
\end{theo}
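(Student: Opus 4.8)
The plan is to construct $m_a^\up$ explicitly as a weighted anchored version of $k_1-1$, then verify the three claims by reducing everything to univariate computations and a standard tensor-product estimate. For the univariate construction, recall that $H(k_1-1)$ is an RKHS with kernel $k_1-1 = \sum_{\nu\in\N}\alpha_{\nu,1}^{-1}h_\nu(x)h_\nu(y)$, and that point evaluation at $a$ is continuous on it with $\|\delta_a\|^2 = (k_1-1)(a,a) \le k_1(a,a)$. I would set
\[
m_a^\up(x,y) := (k_1-1)(x,y) - \frac{(k_1-1)(x,a)\,(k_1-1)(a,y)}{(k_1-1)(a,a)}
\]
when $(k_1-1)(a,a)>0$ (and $m_a^\up := k_1-1$ otherwise, which happens only in degenerate cases), i.e.\ the reproducing kernel of the closed subspace $\{f\in H(k_1-1)\colon f(a)=0\}$. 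Then $m_a^\up(a,a)=0$ is immediate, and $H(1+m_a^\up)$ consists of constants plus this codimension-one subspace; but since $H(k_1-1)$ is itself the orthogonal complement of the constants inside $H(k_1)$ and adjoining back the one-dimensional span of $(k_1-1)(\cdot,a)$ recovers all of $H(k_1-1)$, one checks $H(1+m_a^\up)=H(1)\oplus H(k_1-1)=H(k_1)$ as vector spaces. The norm bounds in (i) are the main place where one must be careful: for $V\colon H(m_a^\up)\hookrightarrow H(k_1)$ one has $m_a^\up \le k_1-1 \le k_1$ in the sense of kernels only after accounting for the rank-one correction, and a direct computation of $\|V\|^2$ as the operator norm of the Gram comparison gives the factor $c^\up(a)=1+k_1(a,a)$; similarly $\|T_j\|^2$ compares $k_j$ with $1+\gamma_j m_a^\up$ fiberwise, and since $H(k_j-1)\hookrightarrow H(k_1-1)$ has norm $\gamma_j^{1/2}$ (stated in Section~\ref{s2}) while the anchoring correction contributes at most $\gamma_j c^\up(a)$, one gets $\|T_j\|\le(1+\gamma_j c^\up(a))^{1/2}$.

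For part (ii), the maximal domain of the product kernel $M_a^\up$ is $\{\bx\colon \prod_j(1+\gamma_j m_a^\up(x_j,x_j))<\infty\}$, and since $m_a^\up(x,x) \le (k_1-1)(x,x)=\sum_\nu\alpha_{\nu,1}^{-1}h_\nu^2(x)$ with a matching lower bound up to the rank-one term (which is harmless because $\sum_j\gamma_j<\infty$), convergence of the product is equivalent to $\sum_{\nu,j}\gamma_j\alpha_{\nu,1}^{-1}h_\nu^2(x_j)<\infty$, i.e.\ to $\bx\in\X^\up$; the two-sided comparison of $m_a^\up(x,x)$ with $(k_1-1)(x,x)$ is the one genuinely new estimate needed here, and it follows from $0\le (k_1-1)(x,x) - m_a^\up(x,x) = (k_1-1)(x,a)^2/(k_1-1)(a,a) \le (k_1-1)(x,x)\cdot\big((k_1-1)(a,a)/(k_1-1)(a,a)\big)$ combined with Cram\'er's bound to control the cross term by a constant multiple of $\exp(x_j^2/2)$, which is summable against $\gamma_j$ — I would cite Lemma~\ref{l22} for the structural part.

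For part (iii), the inclusion $H(M_a^\up)\subseteq L^2(\mu)$ follows as in \citet{GHHR2021}/\citet{GHHRW2020} from $\mu(\X^\up)=1$ (Lemma~\ref{l22}) together with integrability of the diagonal $M_a^\up(\bx,\bx)$, which is the product $\prod_j(1+\gamma_j m_a^\up(x_j,x_j))$ whose $\mu$-integral is $\prod_j(1+\gamma_j(k_1-1)_{\text{(anchored)}})$-type bound, finite by (\hb) and (\hc). The restriction statement $\{f|_{\X^\up}\colon f\in H(K)\}\subseteq H(M_a^\up)$ with $\|T\|\le C^\up(a)$ is where the incomplete-tensor-product machinery of Appendix~\ref{a1} enters: $T$ factors as the infinite tensor product $\bigotimes_j T_j'$ where each $T_j'\colon H(k_j)\to H(1+\gamma_j m_a^\up)$ is the composition of $T_j$ from part (i) with the identity $H(1+\gamma_j m_a^\up)=H(1)\oplus\gamma_j^{1/2}H(m_a^\up)$, and the operator norm of a countable tensor product of contractions-times-$(1+\gamma_j c^\up(a))^{1/2}$ is the product $\prod_j(1+\gamma_j c^\up(a))^{1/2}=C^\up(a)$, which converges by (\hb)–(\hc); the restriction (rather than identical embedding) appears precisely because $\X^\up$ may be strictly smaller than $\X$, as flagged in the introduction, and the appendix result is designed to handle exactly this mismatch of domains. \textbf{The main obstacle} I expect is bookkeeping the operator norms through the rank-one anchoring correction simultaneously with the tensor product — making sure the per-factor constant is exactly $(1+\gamma_j c^\up(a))^{1/2}$ and not something that fails to give a convergent product — together with invoking Appendix~\ref{a1} in the correct form so that the restriction map is well-defined on all of $H(K)$ and not merely on a dense subspace.
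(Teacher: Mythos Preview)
Your explicit rank-one formula for $m_a^\up$ is the wrong kernel, and this breaks part~(i) immediately. With
\[
m_a^\up(x,y) = (k_1-1)(x,y) - \frac{(k_1-1)(x,a)\,(k_1-1)(a,y)}{(k_1-1)(a,a)},
\]
the space $H(m_a^\up)$ is $\{g\in H(k_1-1)\colon g(a)=0\}$, i.e.\ mean-zero functions vanishing at $a$. Hence every $f=c+g\in H(1)\oplus H(m_a^\up)$ satisfies $\xi(f)=c=f(a)$, so $H(1+m_a^\up)=\{f\in H(k_1)\colon \xi(f)=f(a)\}$, a codimension-one subspace of $H(k_1)$. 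For instance $(k_1-1)(\cdot,a)\in H(k_1)$ has $\xi=0$ but value $(k_1-1)(a,a)>0$ at $a$, so it is not in $H(1+m_a^\up)$. Your sentence ``adjoining back the one-dimensional span of $(k_1-1)(\cdot,a)$ recovers all of $H(k_1-1)$'' is true but irrelevant: adding constants does not adjoin that direction back. Consequently the embeddings $T_j\colon H(k_j)\hookrightarrow H(1+\gamma_j m_a^\up)$ do not even exist, and the whole chain collapses.

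The paper proceeds differently. It invokes a kernel $\widehat{m}_a$ from \citet[Lem.~3.13]{GHHRW2020} characterized by the norm identity
\[
\|f\|_{H(1+\gamma\widehat{m}_a)}^2 = |f(a)|^2 + \gamma^{-1}\sum_{\nu\in\N}\alpha_{\nu,1}\,\scp{f}{h_\nu}_{L^2(\mu_0)}^2,
\]
so that $H(\widehat{m}_a)=\{g\in H(k_1)\colon g(a)=0\}$ (not $\{g\in H(k_1-1)\colon g(a)=0\}$) with the seminorm $\|\cdot\|$ as its norm, and then sets $m_a^\up:=c^\up(a)\cdot\widehat{m}_a$. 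The norm bounds on $T_j$ and $V$ are obtained by comparing $|f(a)|$ with $|\xi(f)|$ via the reproducing property, which is where the constant $c^\up(a)=1+k_1(a,a)$ arises; the scaling by $c^\up(a)$ is what makes the per-factor bound exactly $(1+\gamma_j c^\up(a))^{1/2}$. Your attempt to get the two-sided bound for part~(ii) via Cauchy--Schwarz and Cram\'er also has a gap: Cauchy--Schwarz gives only $m_a^\up(x,x)\ge 0$, not a lower bound of the form $k^*(x,x)\le c(1+m_a^\up(x,x))$, and $\exp(x_j^2/2)$ is not summable against $\gamma_j$ for general $\bx\in\X^\up$. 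The paper obtains the two-sided comparison directly from the equivalence of the norms $\|\cdot\|_{H(k_1)}$ and $\|\cdot\|_{H(1+m_a^\up)}$ on the common vector space, which is available precisely because $H(1+m_a^\up)=H(k_1)$. Your outline for~(iii) via Theorem~\ref{ta2} is correct in spirit and matches the paper, but it rests on the univariate input from~(i) being right.
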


\begin{proof}
Recall that $H(k_1)$ is the orthogonal sum of the spaces $H(1)$ and
$H(k^*)$, where $k^* := k_1 - 1$. Moreover, we have $\gamma_1 = 1$. 
We put 
\begin{equation}\label{g21}
\xi(f) := \int_\R f \, d \mu_0.
\end{equation}

In a first embedding step we consider the reproducing kernels
\[
k^\up_{j} := 1 + \gamma_j k^* 
\]
for $j \in \N$. We have
\[
H (k^\up_{j}) = H(k_1)
\]
as vector spaces, 
and
\[
\|f\|^2_{H(k^\up_j)} = 
|\xi(f)|^2
+ \gamma_j^{-1} \cdot \|f\|^2
\]
for $f \in H(k_1)$, where
\[
\|f\|^2 := \sum_{\nu \in \N} \alpha_{\nu,1} \cdot
\scp{f}{h_\nu}^2_{L^2(\mu_0)}, 
\]
cf.\ \citet[Prop.~2.1]{GHHR2021}.
Using \eqref{g56} we conclude that $H(k_j) \subseteq H(k^\up_j)$
with an identical embedding 
\[
T_j^{(1)} \colon H(k_j) \hookrightarrow H(k^\up_{j})
\]
of norm one. 

Next, we describe a second embedding step, which depends on $a \in \R$.
According to \citet[Lem.~3.13]{GHHRW2020} there
exists a reproducing kernel $\widehat{m}_a$ that satisfies 
$H(k_1) = H(1+\widehat{m}_a)$ and $\widehat{m}_a(a,a)=0$
as well as $H(1) \cap H(\widehat{m}_a) = \{0\}$ and
\[
\|f\|_{H(1+ \gamma \widehat{m}_a)}^2 = 
|f(a)|^2 + \gamma^{-1} \cdot \|f\|^2
\]
for every $f \in H(k_1)$ and every $\gamma > 0$.
Actually, a slightly different set of assumptions on the Fourier weights
has been imposed in \citet{GHHRW2020}, but the 
proof of the lemma is
applicable without any changes also in the present setting.

Let $f \in H(k_1)$. Since
\[
| f(a) - \xi(f)| \leq
k_1(a,a)^{1/2} \cdot \|f - \xi(f)\|_{H(k_1)} = 
k_1(a,a)^{1/2} \cdot \|f\|, 
\]
we obtain
\begin{align*}
\|f\|_{H(1+ \gamma \widehat{m}_a)}^2 
&\leq
\left( |\xi(f)| + |f(a) - \xi(f)| \right)^2
+ \gamma^{-1} \cdot \|f\|^2\\
&\leq
\left( |\xi(f)| + k_1(a,a)^{1/2} \cdot \|f\| \right)^2
+ \gamma^{-1} \cdot \|f\|^2\\
&\leq
(1+\gamma) \cdot |\xi(f)|^2 + \left( 1 + \gamma^{-1} \right)
\cdot k_1(a,a) \cdot \|f\|^2
+ \gamma^{-1} \cdot \|f\|^2\\
&\leq 
(1+\gamma) \cdot
\left( |\xi(f)|^2 + \gamma^{-1}
\cdot k_1(a,a) \cdot \|f\|^2
+ \gamma^{-1} \cdot \|f\|^2 \right)\\
&=
(1+\gamma) \cdot
\left( |\xi(f)|^2 + 
\gamma^{-1} c^\up(a) \cdot \|f\|^2 \right).
\end{align*}
For $f \in H(\widehat{m}_a)$ we also obtain
\[
\|f\|_{H(k_1)}^2 = |f(a) - \xi(f)|^2 + \|f\|^2
\leq c^\up(a) \cdot \|f\|^2.
\]

Put $m_a^\up := c^\up(a) \cdot \widehat{m}_a$. 
Clearly, $H(k_1) = H(1+m_a^\up)$ and $m_a^\up(a,a) = 0$.
Furthermore,
\[
\|f\|_{H(1+ \gamma \cdot m_a^\up)} \leq
(1+ \gamma c^\up(a))^{1/2} \cdot
\|f\|_{H(1+\gamma k^*)}
\]
for every $f \in H(k_1)$ and every $\gamma > 0$, 
cf.\ \citet[Thm.~2.1]{GneEtAl16} and its proof.
Moreover, 
\[
\|f\|_{H(k_1)} \leq c^\up(a) \cdot \|f\|_{H(m_a^\up)}
\]
for every $f \in H(m_a^\up)$.
Consequently, 
\[
\|T_j\| \leq \|T_j^{(1)}\| \cdot
\|T^{(2)}_j\| \leq (1+ \gamma_j c^\up(a))^{1/2}
\]
for the identical embedding 
\[
T^{(2)}_j \colon H(k^\up_j) \hookrightarrow 
H(1+\gamma_j \cdot m_a^\up)
\]
as well as $\|V\| \leq c^\up(a)$. This completes the proof of
(i).

Due to the equivalence of $\|\cdot\|_{H(k_1)}$ and 
$\|\cdot\|_{H(1+m_a^\up)}$, there exists a constant $c >0$ 
such that 
\[
m_a^\up(x,x) \leq c \cdot(1+k^*(x,x))
\]
and
\[ 
k^*(x,x) \leq c \cdot(1+m_a^\up(x,x))
\]
for all $x \in \R$.  
Let $\bx \in \R^\N$. Employing (\hc) we conclude that 
\[
\sum_{j \in \N} \gamma_j m_a^\up(x_j,x_j) < \infty
\qquad \Leftrightarrow \qquad
\bx \in \X^\up.
\]
Therefore $M^\up_a$ as defined in \eqref{eq:kernel_M_a_up}
is a reproducing kernel with maximal domain $\X^\up$,
i.e., we have (ii).

We claim that
\[
\{ f|_{\X^\up} \colon f \in H(K) \} \subseteq H(M_a^\up).
\]
Indeed,
Lemma~\ref{l22} and Theorem~\ref{ta2}, together with (\hc) and applied 
with $u_j := 1$ for every $j \in \N$, 
$\X^{(\bu)} := \X$, and $\Y^{(\bu)} := \X^\up$, yield the claim
as well as \eqref{est:bound_op_norm_restriction}.

Since
\[
\int_{\R} m_a^\up(x,x) \, d \mu_0(x) \leq
c \cdot \int_{\R} k_1(x,x) \, d \mu_0(x) =
c \cdot 
\biggl( 1+ \sum_{\nu \in \N} \alpha_{\nu,1}^{-1}\biggr)
< \infty,
\]
which follows from (\hb), we obtain
\[
\int_{\X^\up} M^\up_a (\bx,\bx) \, d \mu (\bx)
= \prod_{j \in \N} \left(1 + \gamma_j
\int_{\R} m_a^\up(x,x) \, d \mu_0(x) \right) < \infty
\]
using (\hc).  Consequently, $H(M^\up_a) \subseteq L^2(\mu)$. 
\end{proof}

We use $\bU$ to denote the set of all finite subsets of $\N$.

\begin{rem}\label{r26}
For $\bu \in \bU$ let 
\[
\gamma_\bu := \prod_{j \in \bu} \gamma_j
\]
and
\[
\phantom{\qquad\quad \bx,\by \in \X^\up.}
m^\up_{a,\bu}(\bx, \by) := \prod_{j\in \bu} m^\up_a(x_j, y_j),
\qquad\quad \bx,\by \in \X^\up. 
\]
The reproducing kernel $M^\up_a$ according to Theorem~\ref{t2}
is of the form 
\[
M^\up_a = \sum_{\bu \in \bU} \gamma_\bu \cdot m_{a,\bu}^\up, 
\]
i.e., it is the superposition of weighted tensor product kernels with
weights $\gamma_\bu$ of product form. Additionally, 
$m^\up_a(a,a) = 0$, i.e., $m_a^\up$ is \emph{anchored} at $a$.
These structural properties are most convenient for the construction 
and analysis of deterministic algorithms, since a particular
orthogonal decomposition of $f \in H(M^\up_a)$, the so-called
anchored decomposition, can be computed efficiently using function 
values of $f$ only.
See Section~\ref{s4.3} for details and references. 
Let us mention that similarly to the anchored decomposition, the
functions $f\in H(K)$ also admit a canonical orthogonal decomposition 
into functions depending only on 
finitely many variables. However, in this case we do not have
an anchored decomposition but an ANOVA
decomposition instead, see \citet[Remark~3.14]{GHHR2021}.

We stress that we do not rely on an explicit formula
for the reproducing kernel $m_a^\up$ in the sequel. However, the value
of $c^\up(a)$, or at least of an upper bound, will be needed in
the construction of algorithms, 
where an anchor $a$ with a small value of $c^\up(a)$ is
preferable.
According to Lemma~\ref{l-a1}, we have
\[
\min_{a \in \R} c^\up(a) = c^\up(0).
\]
and using \citet[Eqn.~(5.5.5)]{S75} we obtain
\[
c^\up(0)  = 
2 + \sum_{\nu \in \N} \alpha_{2\nu,1}^{-1} \cdot |h_{2 \nu}(0)|^2 
=
2 + \sum_{\nu \in \N} \alpha_{2\nu,1}^{-1} \cdot
\frac{(2\nu)!}{(\nu! \cdot 2^\nu)^2} 
\leq
2 + \sum_{\nu \in \N} \alpha_{2\nu,1}^{-1}.
\]
\end{rem}

\begin{rem}\label{r4}
The first embedding step in the proof of Theorem~\ref{t2} already
leads to a tensor product kernel $K^\up$ of the form
\[
K^\up (\bx,\by) =
\sum_{\bu \in \bU} 
\gamma_\bu \cdot \prod_{j \in \bu} k^*(x_j,y_j)
\]
with maximal domain $\X^\up$. However,
since $h_1(x) = x$ for every $x \in \R$ and $h_2(0) \neq 0$, the 
reproducing kernel $k^*$ is not an anchored kernel, i.e., we have
$k^*(x,x) > 0$ for every $x \in \R$.
\end{rem}

\begin{rem}\label{ex1}
The inclusion $\X^\up \subseteq
\X$ may be strict, so that the maximal domain may shrink when 
switching from $H(K)$ to $H(K^\up)$ or to $H(M_a^\up)$. Indeed,
let
\[
r_j := \frac{3}{\ln(2)} \cdot \ln(j+1)
\]
for $j \in \N$. The Fourier weights given by 
\[
\alpha_{\nu,j} := 2^{r_j \cdot \nu} = (j+1)^{3\nu}
\]
for $\nu,j \in \N$ satisfy (EG). Due to Remark~\ref{r2},
we have
\[
\X =  \Bigl\{ \bx \in \R^\N \colon 
\sum_{j \in \N} (j+1)^{-3} \cdot x_j^2 < \infty 
\Bigr\}.
\]

Let $\bx \in \R^\N$ be given by $x_j := j^{1/2}$
for $j \in \N$. On the one hand, $\bx \in \X$.
On the other hand, $\bx \not \in \X^\up$, since $\gamma_j
\geq (j+1)^{-3}$, and $h_2(x) = 2^{-1/2} (x^2-1)$, and therefore
\begin{align*}
\sum_{\nu,j \in\N} \gamma_j \alpha_{\nu,1}^{-1} \cdot h_\nu^2(x_j)
& \geq
\sum_{j \in \N} \gamma_j \cdot \alpha_{2,1}^{-1} \cdot
h_2^2(x_j)\\
&\geq
2^{-7} \cdot \sum_{j \in \N} (j-1)^2 \cdot (j+1)^{-3} 
= \infty.
\end{align*}
\end{rem}

\begin{rem}
Both of the spaces $H(K)$ and $H(M_a^\up)$ are infinite
tensor products of reproducing kernel Hilbert spaces. For
$H(M_a^\up)$ the components 
$H(1 + \gamma_j \cdot m_a^\up)$ 
coincide as vector spaces and therefore have equivalent norms.
This is in sharp contrast to the original setting, 
where we often have a
compact identical embedding of $H(k_j)$ into $H(k_i)$ if, for fixed 
$i$, $j$ is sufficiently large, see Remark~\ref{r1}.
\end{rem}

\subsection{The Embedding for the Lower Bounds}\label{s3.2}

The anchored kernel $m_a^\up$ 
and its infinite-dimensional counterpart $M^\up_a$ will 
enable us to prove upper error bounds for integration and
$L^2$-approximation on $H(K)$.
To prove lower error bounds, we need another anchored
kernel 
$M^\lo_a$, whose maximal domain will turn out to be 
\[
\X^\lo := \Bigl\{ \bx \in \R^\N \colon
\sum_{j \in \N} \alpha_{1,j}^{-1} \cdot x_j^2 < \infty\Bigr\}.
\]
Since $h_1(x) = x$ for all $x\in \R$, we have 
$\X \subseteq \X^\lo \subsetneq \R^{\N}$, so that
$\mu(\X^\lo)=1$ follows from $\mu(\X)=1$.

For $a\in \R$ we put
\[
c^\lo(a) := (1 + \alpha_{1,1} + a^2)^{-1}.
\]

\begin{theo}\label{Lemma:Tensorprodukt2_dim_Raeume}
For every $a\in \R$ there exists a reproducing kernel 
$$
m^\lo_a: \R\times \R \to \R
$$
with the following properties:
\begin{itemize}
\item[(i)] We have 
$$
H(1 + m^\lo_a) = \spann\{ h_0, h_1\}
$$ 
as vector spaces and 
$$
m^\lo_a(a,a) = 0.
$$
\item[(ii)] 
The maximal domain of the reproducing kernel 
\[
M^\lo_a(\bx,\by) := 
\prod_{j\in\N} (1 + \alpha^{-1}_{1,j} \cdot m^\lo_a(x_j, y_j)) 
\]
is equal to $\X^\lo$. 
\item[(iii)] 
We have
\[
\{ f|_{\X} \colon f\in H(M^\lo_a)\} \subseteq H(K).
\]
\end{itemize}
\end{theo}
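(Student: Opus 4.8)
The plan is to take $m_a^{\lo}$ to be, up to the normalizing scalar $c^{\lo}(a)$, the rank-one anchored kernel on the span of $h_0$ and $h_1$:
\[
m_a^{\lo}(x,y) := c^{\lo}(a)\cdot (x-a)(y-a), \qquad x,y\in\R .
\]
Since $h_0 = 1$ and $h_1(x)=x$, the RKHS of the rank-one factor $m_a^{\lo}$ is $\spann\{x-a\}$, whence $H(1+m_a^{\lo}) = \spann\{1\}\oplus\spann\{x-a\} = \spann\{h_0,h_1\}$ as vector spaces, and $m_a^{\lo}(a,a)=0$; this settles (i). The value of $c^{\lo}(a)$ is irrelevant for (i) and (ii), but it is exactly the quantity that makes (iii) work.

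For (ii) I would observe that $m_a^{\lo}(x,x)=c^{\lo}(a)(x-a)^2\ge 0$, so every diagonal factor is $\ge 1$ and
\[
M_a^{\lo}(\bx,\bx)=\prod_{j\in\N}\bigl(1+\alpha_{1,j}^{-1}c^{\lo}(a)(x_j-a)^2\bigr)
\]
is finite if and only if $\sum_{j\in\N}\alpha_{1,j}^{-1}(x_j-a)^2<\infty$. Since $\sum_{j\in\N}\alpha_{1,j}^{-1}<\infty$ by {\rm(\hb)}, a Cauchy--Schwarz estimate shows that this is equivalent to $\sum_{j\in\N}\alpha_{1,j}^{-1}x_j^2<\infty$, i.e.\ to $\bx\in\X^{\lo}$; the standard results on maximal domains of tensor product kernels, used already in the proof of Theorem~\ref{t2}, then identify $\X^{\lo}$ as the maximal domain of $M_a^{\lo}$. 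Recall that $\X\subseteq\X^{\lo}$.

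For (iii) I would run the argument of the proof of Theorem~\ref{t2}(iii) with the two domains in interchanged roles. Write $H(M_a^{\lo})=\bigotimes_{j\in\N}H(1+\alpha_{1,j}^{-1}m_a^{\lo})$ and $H(K)=\bigotimes_{j\in\N}H(k_j)$ as (incomplete) tensor products, in each factor with stabilizing unit vector $1$. Coordinatewise, $H(1+\alpha_{1,j}^{-1}m_a^{\lo})=\spann\{h_0,h_1\}$ is literally a vector subspace of $H(k_j)$, so the identity map $E_j$ of this two-dimensional space is an embedding with $E_j(1)=1$; writing a generic element as $p\,h_0+q\,h_1$ one has $\|p\,h_0+q\,h_1\|^2_{H(k_j)}=p^2+\alpha_{1,j}q^2$, while the anchored decomposition gives $\|p\,h_0+q\,h_1\|^2_{H(1+\alpha_{1,j}^{-1}m_a^{\lo})}=(p+qa)^2+\alpha_{1,j}c^{\lo}(a)^{-1}q^2$. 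Hence $\|E_j\|^2$ is the largest generalized eigenvalue of the pencil $(M_1,M_2)$ of the two positive semidefinite $2\times 2$ matrices representing these forms, and a short computation gives
\[
\det(M_2^{-1}M_1)=c^{\lo}(a)\le 1 \qquad\text{and}\qquad \operatorname{tr}(M_2^{-1}M_1)=1+c^{\lo}(a)+\frac{a^2c^{\lo}(a)}{\alpha_{1,j}},
\]
so that $\|E_j\|^2=1+O(\alpha_{1,j}^{-1})$ with an implied constant depending only on $a$ (through $c^{\lo}(a)<1$). Since $\sum_{j\in\N}\alpha_{1,j}^{-1}<\infty$, the constant $C^{\lo}(a):=\prod_{j\in\N}\|E_j\|$ is finite, so $\bigotimes_{j\in\N}E_j$ extends to a bounded operator of norm $\le C^{\lo}(a)$ from $H(M_a^{\lo})$ to $H(K)$. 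By Theorem~\ref{ta2} (applied with $u_j:=1$ for all $j$, $\X^{(\bu)}:=\X^{\lo}$, and $\Y^{(\bu)}:=\X$; cf.\ the proof of Theorem~\ref{t2}(iii)) this operator is precisely $f\mapsto f|_{\X}$, which gives (iii), together with the bound $\|f|_{\X}\|_{H(K)}\le C^{\lo}(a)\|f\|_{H(M_a^{\lo})}$ and the observation $C^{\lo}(0)=1$.

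The only delicate point is the convergence $\prod_{j\in\N}\|E_j\|<\infty$. Bounding $2apq\le\delta p^2+\delta^{-1}a^2q^2$ with a fixed $\delta$ only yields $\|E_j\|^2$ bounded away from $1$, which destroys the infinite product; one must use $\delta=\delta_j\asymp\alpha_{1,j}^{-1}$ (equivalently, the eigenvalue computation above), and this closes only because $c^{\lo}(a)=(1+\alpha_{1,1}+a^2)^{-1}$ is small enough to create the required spectral gap $1-c^{\lo}(a)>0$ and because $\sum_{j}\alpha_{1,j}^{-1}<\infty$ by {\rm(\hb)}. Parts (i) and (ii) are routine; part (iii), modulo this estimate, is the tensor-product restriction argument already carried out for Theorem~\ref{t2}.
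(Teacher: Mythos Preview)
Your proof is correct and follows essentially the same route as the paper: the same anchored kernel $m_a^{\lo}(x,y)=c^{\lo}(a)(x-a)(y-a)$, the same identification of the maximal domain, and the same appeal to Theorem~\ref{ta2} for the restriction in~(iii). The only difference is how the coordinatewise embedding norm is bounded: the paper factors through the intermediate kernel $k_j^{\lo}=1+\alpha_{1,j}^{-1}h_1\otimes h_1$ and uses Young's inequality with the $j$-dependent splitting parameter $\alpha_{1,j}^{-1}$ to obtain $\|E_j\|^2\le 1+\alpha_{1,j}^{-1}$, whereas you extract the same bound from the $2\times2$ generalized eigenvalue problem (indeed, your trace/determinant formulas give $(\lambda_1-1)(\lambda_1-c^{\lo}(a))=\frac{a^2c^{\lo}(a)}{\alpha_{1,j}}\lambda_1$, hence $\lambda_1-1\le\frac{a^2c^{\lo}(a)}{(1-c^{\lo}(a))\alpha_{1,j}}=\frac{a^2}{(\alpha_{1,1}+a^2)\alpha_{1,j}}\le\alpha_{1,j}^{-1}$). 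Your remark that a fixed $\delta$ in Young's inequality fails and that the spectral gap $1-c^{\lo}(a)>0$ is what makes the argument close is exactly the point, and mirrors the paper's choice of $c^{\lo}(a)$.
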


\begin{proof}
We begin by describing two embedding steps.
Let $\xi(f)$ be given by \eqref{g21}. In the first embedding 
step we consider for $j \in \N$ the reproducing kernels 
\[
\phantom{\qquad \quad x,y\in\R.}
k^\lo_{j}(x,y) := 1 + \alpha^{-1}_{1,j} \cdot h_1(x) \cdot h_1(y),
\qquad \quad x,y\in\R.
\]
Clearly,
\[
H (k^\lo_{j}) = \spann\{ h_0, h_1\}
\]
as vector spaces, 
and
\[
\|f\|^2_{H(k^\lo_j)} = 
|\xi(f)|^2
+ \alpha_{1,j} \cdot \|f\|^2
\]
for every $f \in H(k_1)$, where
\[
\|f\| := |\scp{f}{h_1}_{L^2(\mu_0)}|. 
\]
Hence $H(k^\lo_j) \subseteq H(k_j)$
with an identical embedding 
\[
T_j^{(3)} \colon H(k^\lo_j) \hookrightarrow H(k_{j})
\]
of norm one. 

For the second embedding step, we consider the reproducing 
kernel $\widecheck{m}_a$, given by
\[
\widecheck{m}_a(x,y) := (x-a) \cdot (y-a)
\]
for $x,y \in \R$. Obviously,
$H(1 + \widecheck{m}_a) = \spann\{ h_0, h_1\}$ and $\widecheck{m}_a(a,a) 
= 0$
as well as $H(1) \cap H(\widecheck{m}_a) = \{0\}$. 
Furthermore, for every $f\in\spann\{h_0,h_1\}$ there is some $y\in\R$
such that 
$f = f(a) + \widecheck{m}_a(\cdot,y)$,
which yields
\[
\|f\|_{H(1+ \gamma \widecheck{m}_a)}^2 
= f(a)^2 + \gamma^{-1}\cdot (y-a)^2
= f(a)^2 + \gamma^{-1} \cdot \scp{f}{h_1}_{L^2(\mu_0)}^2.
\]
Let $f \in \spann\{h_0, h_1\}$. Since 
\[
|\xi(f) - f(a)| \le k^\lo_1(a,a)^{1/2} \cdot \|f - \xi(f) \|_{H(k^\lo_1)} 
= ( \alpha_{1,1} +  a^2)^{1/2} \cdot \|f\|,
\]
we have
\begin{align*}
\| f \|^2_{H(k^\lo_j)} &\le 
\left( |f(a)| + |\xi(f) - f(a)|\right)^2 + 
\alpha_{1,j} \cdot \|f\|^2\\
&\le 
\left( |f(a)| + (\alpha_{1,1} + a^2)^{1/2} \cdot 
\|f\| \right)^2 + \alpha_{1,j} \cdot
\|f\|^2 \\
&\le \left(1 + \alpha_{1,j}^{-1}\right) \cdot |f(a)|^2 + 
(1 + \alpha_{1,j}) \cdot 
(\alpha_{1,1} + a^2) \cdot \|f\|^2 + 
\alpha_{1,j} \cdot \|f\|^2 \\
&\le  
\left(1 + \alpha_{1,j}^{-1} \right) \cdot
\left( |f(a)|^2 + c^\lo (a)^{-1} \cdot \alpha_{1,j} \cdot 
\|f\|^2 \right)\\
&= 
\left(1 + \alpha_{1,j}^{-1}\right) 
\cdot \| f \|^2_{H(1 + c^\lo (a) \cdot \alpha_{1,j}^{-1} \cdot
\widecheck{m}_a)}.
\end{align*}
Put $m^\lo_a := c^\lo(a) \cdot \widecheck{m}_a$. We get
\[
\| T^{(4)}_j \| \le \left(1 + \alpha_{1,j}^{-1}\right)^{1/2} 
\]
for the norm of the identical embedding
\[
T^{(4)}_j \colon H(1 + \alpha_{1,j}^{-1} \cdot m^\lo_a) 
\hookrightarrow H(k^\lo_j).
\]
Due to the equivalence of the norms $\| \cdot \|_{H(k^\lo_1)}$ and 
$\| \cdot \|_{H(1 + m^\lo_a)}$, there exists a constant $c>0$ 
such that 
\[
m^\lo_a(x,x) \le c \cdot (1 + \alpha^{-1}_{1,1} \cdot  x^2)
\]
and
\[
\alpha^{-1}_{1,1} \cdot  x^2 \le c \cdot ( 1 + m^\lo_a(x,x) )
\]
for all $x\in \R$. This in combination with (\hb) gives us 
\[
\sum_{j\in\N} \alpha^{-1}_{1,j} \cdot m^\lo_a (x_j,x_j) 
< \infty 
\qquad \Leftrightarrow \qquad
\bx \in \X^\lo.
\]
This shows that $M^\lo_a$ as defined in (ii) has maximal domain $ \X^\lo$.

Furthermore, 
\[
T^{(3)}_j \circ T^{(4)}_j \colon
H(1 + \alpha_{1,j}^{-1}\, \cdot m^\lo_a) \hookrightarrow H(k_j)
\]
is continuous with norm bounded by $(1 + \alpha^{-1}_{1,j} )^{1/2}$. Due 
to (\hb) and Theorem~\ref{ta2} applied with $u_j:=1$, 
$\X^{(\bu)} := \X^\lo$, $\Y^{(\bu)} := \X$, 
$K^{(\bu)} := M^\lo_a$, and $L^{(\bu)} := K$ yields
\[
\{ f|_\X \colon f \in H(M^\lo_a) \} \subseteq H(K).
\qedhere
\]
\end{proof}

\begin{rem}
In the cases (PG) and (EG) with $\limsup_{j \to \infty} b_j < 1$ we have
$\X \subsetneq \X^\lo$, 
which follows directly from the definition of $\X^\lo$
and \citet[Prop.~3.10 and 3.19]{GHHR2021}.
This means that the maximal domain becomes strictly larger
when switching from $H(K)$ to $H(M_a^\lo)$,
\end{rem}

\section{Integration and $L^2$-Approximation}\label{s4}

In Section \ref{s4.1} we survey and 
extend recent results from \citet{DungNguyen22} on integration 
and $L^2$-approximation in the case of univariate Hermite spaces 
of finite smoothness. We stress that in the latter paper the 
multivariate case and Banach spaces of functions with integrability 
exponent $p \in {]1,\infty[}$ are studied, while we only need to 
consider the univariate case with $p=2$.

Hermite spaces of functions of infinitely many variables
are considered in Section~\ref{s4.2} with Theorem~\ref{t3} as our main 
result under the general assumptions (\ha)--(\hc) and with 
Corollary~\ref{t1}, which deals with the particular cases (PG) and (EG).
In Section~\ref{s4.3} we describe in more detail the algorithms that 
achieve our upper error bounds.

The computational problems studied in Sections \ref{s4.1} to
\ref{s4.3} are of the following form. The function space is
an RKHS $H(L)$ on a domain $E$ that is equipped with a probability 
measure $\eta$ such that $H(L) \subseteq L^2(\eta)$.
We consider linear algorithms $A$ that use a finite number of
function values to integrate or to approximate
the functions $f \in H(L)$. More precisely,
\begin{equation}\label{form_linear_algorithmus}
\phantom{\qquad\quad f \in H(L),}
A(f) := \sum_{i=1}^m f(x_i) \cdot g_i,
\qquad\quad f \in H(L),
\end{equation}
with $m \in \N$ and $x_1,\dots,x_m \in E$, and with
$g_1, \dots, g_m \in \R$ for integration
and $g_1, \dots, g_m \in L^2(\eta)$ for $L^2$-approximation. 
The worst-case error of $A$ is defined by
\[
\err(A,L) :=
\sup
\biggl\{ \biggl|\int_E f \, d \eta - A(f) \biggr| \colon
f\in H(L),\ \|f\|_{H(L)}\leq 1 \biggr\}
\]
for integration and by
\[
\err(A,L) := \sup
\{ \|f-A(f)\|_{L^2(\eta)} \colon
f\in H(L),\ \|f\|_{H(L)}\leq 1\}
\]
for $L^2$-approximation. The worst-case cost of $A$ is defined by
\[
\cost (A) := \sum_{i=1}^m \cost(x_i),
\]
where $\cost \colon E \to {[0,\infty[} \cup \{\infty\}$ specifies
the cost of a single function evaluation.
Of course, the most natural choice of the cost function is $\cost(x) := 1$
for every $x \in E$, which means that the functions from
$H(L)$ may be evaluated anywhere in the domain $E$ at a constant
cost one. As the key quantity, we study the 
$n$-th minimal worst-case error, which is defined by
\[
\err_n(L) := \inf \{\err(A,L) \colon \cost(A) \leq n\}
\]
for $n \in \N$.

Let $\bz:=(z_n)_{n \in \N}$ denote a sequence of positive real
numbers. The decay of $\bz$ is defined by
\begin{equation}\label{g90}
\decay (\bz) := 
\sup \Bigl\{ \tau > 0 \colon \sum_{n \in \N} z_n^{1/\tau} < \infty \Bigr\}
\end{equation}
with the convention that $\sup \emptyset := 0$. As a well-known fact
\[
\decay (\bz) = 
\liminf_{n \to \infty} \frac{\ln(z_n^{-1})}{\ln(n)}
\]
if the decay or the limit inferior is positive,
see, e.g., \citet[Lem.~B.3]{GHHRW2020}. Moreover,
\[
\decay(\bz) = 
\sup\{\tau > 0 \colon 
\sup_{n \in \N} \left(z_n \cdot n^\tau \right) < \infty\},
\]
if $\bz$ is non-increasing, see, e.g., \citet[p.~250]{FHW2012}.
Henceforth we use the notation
\[
\bz^s := (z_n^s)_{n \in \N}
\]
for $s \in \R$. 

In Sections \ref{s4.1} and \ref{s4.2} we establish upper and lower
bounds for the decay
\[
\dec(L) := \decay \left( (\err_n(L))_{n \in \N} \right)
\]
of the $n$-th minimal worst-case errors.
Note that a lower bound for $\dec (L)$ corresponds to an upper bound for 
the $n$-th minimal errors $\err_n(L)$ and vice versa.

\subsection{Functions of a Single Variable}\label{s4.1}

Our setting for univariate integration and $L^2$-approximation
is specified by a non-decreasing sequence 
$\ba := (\alpha_\nu)_{\nu \in \N}$
of Fourier weights satisfying at least
\begin{equation} \label{g11}
 \sum_{\nu\in \N}  \alpha_{\nu}^{-1} \cdot \nu^{-1/2} < \infty,
\end{equation}
which yields the Hermite kernel 
\begin{equation}\label{g15}
\phantom{\qquad\quad x,y \in \R.}
k (x,y) := 1 +
\sum_{\nu\in \N} 
\alpha_{\nu}^{-1} \cdot h_\nu (x) \cdot h_\nu (y),
\qquad\quad x,y \in \R.
\end{equation}
See \citet[Sec.~3.1]{GHHR2021}. Some of the results for the
integration problem will only be established in the particular case 
$k^{[r]} := k$ with
\begin{equation}\label{g12}
\alpha_\nu := (\nu+1)^r,
\end{equation}
where we have to require $r > 1/2$ to ensure \eqref{g11}. 
Cf.\ (PG) in Section~\ref{s2.1} and recall Remark~\ref{r5}.

The measure $\eta$ on $E := \R$
that defines the integral and the $L^2$-norm 
is the univariate standard normal
distribution $\mu_0$,
and we define $\cost(x) := 1$ for every $x \in \R$.

\subsubsection{Integration}

For Fourier weights according to \eqref{g12} with $r \in \N$
even the asymptotic behavior 
\begin{equation}\label{g5}
\err_n (k^{[r]}) \asymp n^{-r}
\end{equation}
of the $n$-th minimal errors for integration is known,
see \citet{DILP18} and \citet{DungNguyen22}.
Actually, in both of these papers 
the multivariate case with the $d$-fold tensor product of the 
kernel $k^{[r]}$ and the $d$-dimensional standard normal distribution
is studied, which results in the additional factor $(\ln(n))^{(d-1)/2}$ 
in \eqref{g5}. The corresponding lower bound is due to
\citet[Thm.~1]{DILP18}, and the matching upper bound is
due to \citet[Thm.~2.3]{DungNguyen22}; upper bounds involving
a further logarithmic factor 
have already been established in \citet[Cor.~1]{DILP18} 
for $d \in \N$ and in \citet[Thm.~4.5]{KSG22}
for $d=1$.

In the sequel, we present the construction of asymptotically optimal 
quadrature formulas and a sketch of the analysis from \citet{DungNguyen22}.
To this end, we also consider the Sobolev space $W^r$ of functions on the
interval $I := [-1/2,1/2]$ with weak derivatives up to order
$r$ in $L^2(\lambda)$, where $\lambda$ denotes the Lebesgue 
measure on $I$. The corresponding norm is given by
\[
\|f\|_{W^r}
:= 
\left(\sum_{i=0}^r \|f^{(i)}\|_{L^2(\lambda)}^2\right)^{1/2}
\]
for $f \in W^r$.

For $f \colon \R \to \R$ and $\ell \in \Z$
we use $f(\cdot + \ell)$ to denote the function $x \mapsto
f(x+\ell)$ on the domain $I$. Put
\[
\phi(x) := (2\pi)^{-1/2} \cdot \exp(-x^2/2)
\]
for $x \in \R$. 
The basic idea is to only take into account finitely many
integer shifts $I + \ell$ of $I$, instead of $\R$,
and to apply asymptotically
optimal quadrature formulas on $W^r$ to the integrands
$(f \phi) (\cdot + \ell)$.

See \citet[Eqn.~2.11]{DungNguyen22} for the following key result.

\begin{lemma}\label{la1}
For every $r \in \N$ and every
$\delta \in {]0,1/4[}$ there exists $c > 0$ such that
\[
\|(f \phi)(\cdot+\ell)\|_{W^r} \leq 
c \cdot \exp(- \delta \ell^2) \cdot\|f\|_{H(k^{[r]})}
\]
for every $f \in H(k^{[r]})$ and every $\ell \in \Z$.
\end{lemma}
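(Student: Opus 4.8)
The plan is to reduce the claimed Sobolev bound for the shifted integrand $(f\phi)(\cdot+\ell)$ to a pointwise estimate that (a) controls each derivative $((f\phi)(\cdot+\ell))^{(i)}$ for $0\le i\le r$ by $\|f\|_{H(k^{[r]})}$ times a Gaussian-in-$\ell$ factor, and (b) is uniform over the compact interval $I$. Since $f\in H(k^{[r]})$ means $f=\sum_{\nu}\langle f,h_\nu\rangle_{L^2(\mu_0)}h_\nu$ with $\sum_\nu(\nu+1)^r\langle f,h_\nu\rangle^2=\|f\|^2_{H(k^{[r]})}$, and since $W^r$ with the stated norm is (via the fundamental theorem of calculus / Leibniz rule) controlled by $\sum_{i=0}^r\sup_{x\in I}|g^{(i)}(x)|$ up to a constant depending only on $r$ and $|I|$, it suffices to bound $\sup_{x\in I}|(f\phi)^{(i)}(x+\ell)|$ for each $i\le r$. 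By the Leibniz rule, $(f\phi)^{(i)}=\sum_{s=0}^i\binom{i}{s}f^{(s)}\phi^{(i-s)}$, and $\phi^{(k)}(x)=p_k(x)\phi(x)$ for a polynomial $p_k$ of degree $k$ (Hermite-polynomial structure of Gaussian derivatives), so the whole thing is a finite sum of terms $(\text{polynomial})\cdot f^{(s)}(x)\cdot\phi(x)$ with $s\le r$.

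The next step is to bound $|f^{(s)}(x)|$ and $|f^{(s)}(x)\phi(x)|$ pointwise for $s\le r$. The key input is the known growth bounds for Hermite polynomials and their derivatives — the Cramér-type inequality and the derivative formula $h_\nu'=\sqrt{\nu}\,h_{\nu-1}$ (up to the $L^2(\mu_0)$-normalization constant), as recorded in \citet[Lem.~3.1 and 3.2]{GHHR2021}. Differentiating $f=\sum_\nu c_\nu h_\nu$ $s$ times gives $f^{(s)}=\sum_\nu c_\nu\cdot(\text{const}_\nu)\cdot h_{\nu-s}$ with $|\text{const}_\nu|\lesssim\nu^{s/2}\le\nu^{r/2}$, hence by Cauchy--Schwarz
\[
|f^{(s)}(x)|\le\Bigl(\sum_\nu(\nu+1)^r c_\nu^2\Bigr)^{1/2}\Bigl(\sum_\nu(\nu+1)^{r-2r}\,|h_{\nu-s}(x)|^2\Bigr)^{1/2}
\le\|f\|_{H(k^{[r]})}\cdot\Bigl(\sum_\nu(\nu+1)^{-r}h_{\nu-s}(x)^2\Bigr)^{1/2}.
\]
Using Cramér's inequality $h_\mu(x)^2\le C\exp(x^2/2)$ (or, more sharply, $h_\mu(x)^2\lesssim\exp(x^2/2)$ uniformly in $\mu$) and the convergence of $\sum_\nu(\nu+1)^{-r}$ for $r>1/2$ (which holds since $r\in\N$), the inner sum is $\lesssim\exp(x^2/2)$. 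Combining with $\phi(x)=(2\pi)^{-1/2}\exp(-x^2/2)$, we get $|f^{(s)}(x)\phi(x)|\lesssim\|f\|_{H(k^{[r]})}\cdot\exp(-x^2/4)$, and the polynomial prefactors from the $\phi^{(i-s)}$ terms are absorbed since $|q(x)|\exp(-x^2/4)\le C_q\exp(-x^2/8)$.

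Finally, substitute $x\mapsto x+\ell$ with $x\in I=[-1/2,1/2]$: for $|\ell|\ge1$ we have $(x+\ell)^2\ge(|\ell|-1/2)^2\ge\tfrac{1}{2}\ell^2-\tfrac{1}{8}$ (and the case $\ell=0$ is trivially absorbed into the constant), so $\exp(-(x+\ell)^2/8)\le C\exp(-\ell^2/16)$. Given $\delta\in{]0,1/4[}$, one actually carries a free parameter through: replacing the crude exponent $x^2/4$ by $(1-\eps)x^2/2$ for a suitable small $\eps$ in the Cramér step (Cramér's inequality gives $|h_\mu(x)|^2\le c_\eps\exp((1-\eps)x^2/2)$ is not standard, so instead keep $\exp(x^2/2)$ from Cramér and pair it against the full $\exp(-x^2)=\phi(x)^2\cdot 2\pi$ only partially — split $\phi=\phi^{1/2}\cdot\phi^{1/2}$), so that after the shift one is left with $\exp(-c(x+\ell)^2)$ for some $c$ slightly below $1/2$, and $c(|\ell|-1/2)^2\ge\delta\ell^2$ once $|\ell|$ is large and $\delta<c$; absorbing the finitely many small $\ell$ and the remaining polynomial factors into $c$ completes the bound $\|(f\phi)(\cdot+\ell)\|_{W^r}\le c\exp(-\delta\ell^2)\|f\|_{H(k^{[r]})}$.

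The main obstacle is bookkeeping the exponential constants: one must be careful that the Gaussian weight $\phi$ beats both the $\exp(x^2/2)$ coming from Cramér's inequality \emph{and} leaves a strict positive exponent $\delta$ after the shift by $\ell$, for \emph{every} $\delta<1/4$. This forces a slightly delicate splitting of $\phi$ (or an $\eps$-sharpened Cramér bound) rather than the naïve estimate; everything else — the Leibniz expansion, $h_\nu'=\sqrt\nu\,h_{\nu-1}$, Cauchy--Schwarz, and $\sum(\nu+1)^{-r}<\infty$ — is routine.
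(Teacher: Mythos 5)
The paper does not prove this lemma itself but cites \citet[Eqn.~2.11]{DungNguyen22}, so your proposal has to stand on its own — and as written it has a genuine gap at the top derivative order. You reduce $\|(f\phi)(\cdot+\ell)\|_{W^r}$ to $\sup$-norm bounds on $(f\phi)^{(i)}$ and then claim a pointwise estimate $|f^{(s)}(x)|\le \|f\|_{H(k^{[r]})}\bigl(\sum_\nu(\nu+1)^{-r}h_{\nu-s}(x)^2\bigr)^{1/2}$. The Cauchy--Schwarz step does not give this: with $h_\nu^{(s)}=\sqrt{\nu(\nu-1)\cdots(\nu-s+1)}\,h_{\nu-s}$ the derivative constants satisfy $|\mathrm{const}_\nu|^2\le(\nu+1)^{s}$, so after splitting the weight $(\nu+1)^{r}$ onto the coefficients the second factor is $\sum_\nu(\nu+1)^{s-r}h_{\nu-s}(x)^2$, not $\sum_\nu(\nu+1)^{-r}h_{\nu-s}(x)^2$. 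For $s=r$ (and $s=r-1$) this series diverges — already at $x=0$ one has $\sum_\mu h_{2\mu}(0)^2=\sum_\mu\frac{(2\mu)!}{(\mu!\,2^\mu)^2}=\infty$ — and no choice of constants rescues the step, because the target inequality $\sup_{x\in I}|f^{(r)}(x)|\lesssim\|f\|_{H(k^{[r]})}$ is itself false: membership in $H(k^{[r]})$ only places $f^{(r)}$ in $L^2(\mu_0)$, on which point evaluation is unbounded. So the route through sup-norms cannot close for $i=r$.

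The repair is to stay in $L^2$ throughout, which is both simpler and exactly where the threshold $\delta<1/4$ comes from. After the Leibniz expansion $(f\phi)^{(i)}=\sum_{s\le i}\binom{i}{s}f^{(s)}p_{i-s}\phi$, estimate each term in $L^2(I+\ell,dx)$ directly: since $\phi^2\,dx=\phi\,d\mu_0$,
\[
\int_{I+\ell}|f^{(s)}|^2\,|p_{i-s}|^2\,\phi^2\,dx
\;\le\;\Bigl(\sup_{x\in I+\ell}|p_{i-s}(x)|^2\phi(x)\Bigr)\cdot\|f^{(s)}\|_{L^2(\mu_0)}^2 ,
\]
and $\|f^{(s)}\|_{L^2(\mu_0)}^2=\sum_\nu c_\nu^2\,\nu(\nu-1)\cdots(\nu-s+1)\le\sum_\nu c_\nu^2(\nu+1)^r=\|f\|_{H(k^{[r]})}^2$ by orthonormality of the $h_{\nu-s}$ — no Cram\'er inequality and no pointwise control of $f^{(s)}$ needed. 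Since $\min_{x\in I+\ell}x^2=(|\ell|-1/2)^2$, the supremum is $O\bigl(\exp(-(|\ell|-1/2)^2/2+\eps\ell^2)\bigr)$ for any $\eps>0$ (absorbing the polynomial), and taking the square root yields $\exp(-\delta\ell^2)$ up to a constant for every $\delta<1/4$. Your final paragraph already senses that the exponential bookkeeping is the delicate point and proposes splitting $\phi$; the real issue, however, is not the bookkeeping but the unprovable (indeed false) pointwise bound on the highest derivatives that the bookkeeping is paired with.
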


It is well known that the $n$-th minimal errors for integration
on $W^r$ with respect to $\lambda$ are of the order $n^{-r}$.
In the sequel, we employ any asymptotically optimal sequence 
of $m$-point quadrature formulas $A^\prime_m$ on $I$, i.e.,
there exists $c > 0$ such that
\[
\left| \int_I f \, d \lambda - A^\prime_m(f) \right|
\leq c \cdot m^{-r} \cdot \|f\|_{W^r}
\]
for every $f \in W^r$ and every $m \in \N$. 
This optimality holds, for instance, for quadrature
formula based on equidistant nodes from $I$ and piecewise
polynomial interpolation of degree at least $r-1$.

For every $L \in \N$ and every sequence $\bm := (m_\ell)_{|\ell| < L}$
in $\N$ we obtain a quadrature formula
\[
A_{L,\bm} (f) := \sum_{|\ell| < L} 
A^\prime_{m_\ell} ((f \cdot \varphi) (\cdot + \ell))
\]
on $H(k^{[r]})$.

See \citet[p.~8]{DungNguyen22} for the following fact, which
immediately follows from Lemma~\ref{la1} and the continuity of
$f \mapsto \int_I f \, d \lambda$ on $W^r$.

\begin{lemma}\label{la2}
For every $r \in \N$ and every $\delta \in {]0,1/4[}$
there exists $c > 0$ such that
\[
\err(A_{L,\bm}, k^{[r]})
\leq c \cdot \left(
\sum_{|\ell| < L} m_\ell^{-r} \cdot \exp (-\delta \ell^2) 
+
\sum_{|\ell| \geq L} \exp (-\delta \ell^2) \right)
\]
for all $L \in \N$ and $\bm$ as before.
\end{lemma}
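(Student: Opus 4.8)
The plan is to bound the worst-case error of the quadrature formula $A_{L,\bm}$ by splitting the integration error over $\R$ into contributions coming from the strips $I+\ell$ with $|\ell| < L$ (where we actually apply the quadrature rule $A'_{m_\ell}$) and the tail contribution from $|\ell| \ge L$ (which we simply ignore). For $f \in H(k^{[r]})$ write
\[
\int_\R f \, d\mu_0 = \int_\R f \varphi \, d\lambda_{\R}
= \sum_{\ell \in \Z} \int_I (f\varphi)(\cdot + \ell) \, d\lambda,
\]
using that $\mu_0$ has Lebesgue density $\varphi$ and that $\R = \bigcup_{\ell \in \Z}(I+\ell)$ is a partition up to a null set. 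Then
\[
\int_\R f \, d\mu_0 - A_{L,\bm}(f)
= \sum_{|\ell| < L}\Bigl( \int_I (f\varphi)(\cdot+\ell)\, d\lambda - A'_{m_\ell}((f\varphi)(\cdot+\ell))\Bigr)
+ \sum_{|\ell| \ge L} \int_I (f\varphi)(\cdot+\ell)\, d\lambda .
\]

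For the first sum, each summand is an integration error on $W^r$ applied to $(f\varphi)(\cdot+\ell)$, so by the assumed asymptotic optimality of $A'_m$ it is bounded by $c\, m_\ell^{-r}\,\|(f\varphi)(\cdot+\ell)\|_{W^r}$, and Lemma~\ref{la1} then gives the bound $c'\, m_\ell^{-r}\exp(-\delta\ell^2)\,\|f\|_{H(k^{[r]})}$. For the second sum, I would use continuity of $g\mapsto\int_I g\,d\lambda$ on $W^r$ (Cauchy--Schwarz: $|\int_I g\,d\lambda| \le \|g\|_{L^2(\lambda)} \le \|g\|_{W^r}$), and again Lemma~\ref{la1}, to bound each tail term by $c''\exp(-\delta\ell^2)\,\|f\|_{H(k^{[r]})}$. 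Summing, taking the supremum over the unit ball of $H(k^{[r]})$, and absorbing constants yields exactly the claimed inequality with a single constant $c$ depending only on $r$ and $\delta$.

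I do not expect a genuine obstacle here, since the two quantitative inputs (optimality of $A'_m$ on $W^r$ and the decay estimate of Lemma~\ref{la1}) are exactly what make both pieces work. The one point requiring a little care is justifying the interchange of summation over $\ell\in\Z$ with the integral in the first display; this follows from Tonelli's theorem applied to $|f|\varphi$, which is integrable because $f\in H(k^{[r]})\subseteq L^2(\mu_0)\subseteq L^1(\mu_0)$. A second minor point is that $\delta$ must be chosen in $]0,1/4[$ so that Lemma~\ref{la1} applies and the geometric-type series $\sum_\ell \exp(-\delta\ell^2)$ converges; both requirements are met simultaneously for any such $\delta$, and the resulting constant is finite.
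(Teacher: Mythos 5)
Your proof is correct and follows exactly the route the paper indicates: split $\int_\R f\,d\mu_0$ over the integer shifts of $I$, bound the strips $|\ell|<L$ via the optimality of $A'_{m_\ell}$ on $W^r$ together with Lemma~\ref{la1}, and bound the tail $|\ell|\ge L$ via the continuity of $g\mapsto\int_I g\,d\lambda$ on $W^r$ together with Lemma~\ref{la1}. The paper gives no more detail than this (it cites the argument as immediate), so your write-up, including the Tonelli justification, is if anything more complete.
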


In the sequel, we fix some $\delta := {]0,1/4[}$.
For $n \geq 2$ we choose
\[
L_n := 
\left\lceil \left( 
\frac{r}{\delta} \cdot \ln(n) \right)^{1/2} \right\rceil
\]
and
\[
m_{\ell,n} := 
\left\lceil n \cdot \exp
\left(-\frac{\delta}{2r} \cdot \ell^2\right) \right\rceil,
\]
and we use $A_n := A_{L_n,\bm_n}$ to denote the corresponding quadrature 
formula on 
$H(k^{[r]})$.

See \citet[Thm.~2.1]{DungNguyen22} for the following result,
which yields the upper bound in \eqref{g5}
and is derived from Lemma~\ref{la2} in a straightforward way.

\begin{theo}\label{ta4}
For the integration problem on $H(k^{[r]})$ the following holds 
true.
For every $r \in \N$ there exists $c > 0$ such that the worst-case 
error and the number of nodes of $A_n$ satisfy
\[
\err(A_n,k^{[r]})
\leq c \cdot n^{-r}
\]
and
\[
\sum_{|\ell| < L_n} m_{\ell,n} \leq c \cdot n,
\]
respectively, for every $n \in \N$.
\end{theo}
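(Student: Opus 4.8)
The plan is to derive Theorem~\ref{ta4} directly from Lemma~\ref{la2} by plugging in the specified choices of $L_n$ and $m_{\ell,n}$ and estimating the two sums. First I would treat the error bound. By Lemma~\ref{la2} with the fixed $\delta \in {]0,1/4[}$, there is $c>0$ with
\[
\err(A_n,k^{[r]}) \leq c \cdot \left(
\sum_{|\ell| < L_n} m_{\ell,n}^{-r} \cdot \exp(-\delta \ell^2)
+ \sum_{|\ell| \geq L_n} \exp(-\delta \ell^2) \right).
\]
For the first sum, use $m_{\ell,n} \geq n \cdot \exp(-\tfrac{\delta}{2r} \ell^2)$, so that $m_{\ell,n}^{-r} \leq n^{-r} \cdot \exp(\tfrac{\delta}{2} \ell^2)$, and therefore
\[
\sum_{|\ell| < L_n} m_{\ell,n}^{-r} \cdot \exp(-\delta \ell^2)
\leq n^{-r} \cdot \sum_{\ell \in \Z} \exp\!\left(-\tfrac{\delta}{2} \ell^2\right) = c' \cdot n^{-r},
\]
where $c' := \sum_{\ell \in \Z} \exp(-\tfrac{\delta}{2} \ell^2) < \infty$. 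For the tail sum, the choice of $L_n$ gives $L_n^2 \geq \tfrac{r}{\delta} \ln(n)$, hence for $|\ell| \geq L_n$ we have $\exp(-\delta \ell^2) \leq \exp(-\delta L_n^2) \cdot \exp(-\delta(\ell^2 - L_n^2)) \leq n^{-r} \cdot \exp(-\delta \ell^2 + \delta L_n^2)$; summing and bounding crudely by $\sum_{\ell \in \Z}$ of a Gaussian (after a shift argument) again yields $\leq c'' \cdot n^{-r}$. Combining the two contributions gives the claimed bound $\err(A_n,k^{[r]}) \leq c \cdot n^{-r}$.

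Next I would handle the cost, i.e. the bound $\sum_{|\ell|<L_n} m_{\ell,n} \leq c \cdot n$. Using $m_{\ell,n} = \lceil n \exp(-\tfrac{\delta}{2r}\ell^2)\rceil \leq n \exp(-\tfrac{\delta}{2r}\ell^2) + 1$, we get
\[
\sum_{|\ell| < L_n} m_{\ell,n}
\leq n \cdot \sum_{\ell \in \Z} \exp\!\left(-\tfrac{\delta}{2r}\ell^2\right) + (2L_n - 1).
\]
The first term is $\widetilde c \cdot n$ with $\widetilde c := \sum_{\ell\in\Z}\exp(-\tfrac{\delta}{2r}\ell^2) < \infty$, and the second term is $O(L_n) = O((\ln n)^{1/2}) = O(n)$, so the whole expression is $\leq c \cdot n$ for a suitable $c$. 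Finally, to make the statement hold for \emph{every} $n \in \N$ rather than just $n \geq 2$ (the regime where $L_n, m_{\ell,n}$ were defined), one absorbs the finitely many small values of $n$ into the constant, or simply notes that $A_1$ can be taken to be a trivial one-point rule whose error is bounded.

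I do not expect any genuine obstacle here: the argument is entirely a matter of bookkeeping with Gaussian sums, and the delicate analytic input — namely the exponential decay estimate of Lemma~\ref{la1}, which controls how the Hermite weight $\phi$ localizes a function in $H(k^{[r]})$ to finitely many integer shifts of $I$ — has already been established. The only point requiring a little care is to make sure the two appearances of the Gaussian-type sums genuinely converge with the chosen exponents $\delta/2$ and $\delta/(2r)$ (both positive, so convergence is immediate) and that the $n^{-r}$ factor is correctly extracted from the tail sum via the defining inequality $L_n^2 \geq \tfrac{r}{\delta}\ln n$. Everything else is routine, and the result is exactly \citet[Thm.~2.1]{DungNguyen22}, so I would simply cite that source for the full details after indicating the computation above.
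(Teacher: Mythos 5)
Your derivation is correct and is exactly the ``straightforward'' derivation from Lemma~\ref{la2} that the paper alludes to (the paper itself only cites \citet[Thm.~2.1]{DungNguyen22} without writing it out): the lower bound $m_{\ell,n}\geq n\exp(-\tfrac{\delta}{2r}\ell^2)$ extracts $n^{-r}$ from the first sum, the inequality $\ell^2-L_n^2\geq(|\ell|-L_n)^2$ together with $L_n^2\geq\tfrac{r}{\delta}\ln n$ handles the tail, and the cost bound is a convergent Gaussian sum plus an $O(L_n)$ term. No gaps; the remark about absorbing small $n$ into the constant is the right way to dispose of the $n=1$ edge case.
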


\begin{rem}\label{r10}
Let $\overline{r} \in \N$.
Since $L_n$ and $m_{\ell,n}$ are non-decreasing functions of $r$,
we may easily construct a sequence of quadrature formulas
that is asymptotically optimal simultaneously for all
$r \in \{1,\dots,\overline{r}\}$.
\end{rem}

The result from \citet{DungNguyen22} may be extended in the
following way.

\begin{theo}\label{t5}
For the integration problem on $H(k)$ the following holds true.
\begin{itemize}
\item[{\rm (}i{\rm )}] If $\decay (\ba^{-1}) > 1$ then
\[
 \dec (k) \geq \decay (\ba^{-1}).
\]
\item[{\rm (}ii{\rm )}] For Fourier weights of the form 
\eqref{g12} we have
\[
 \dec (k^{[r]}) = \decay (\ba^{-1}) = r
\]
if $r \geq 1$ and
\[
 \dec (k^{[r]}) \leq \decay (\ba^{-1}) = r
\]
if $1/2 < r < 1$.
\end{itemize}
\end{theo}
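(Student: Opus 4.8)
The plan is to bootstrap from the results already assembled for Fourier weights of the special form \eqref{g12} (Theorem~\ref{ta4} together with the classical lower bound recorded in \eqref{g5}) to general non-decreasing weights $\ba$, and then to read off the decay. For part (i), suppose $\decay(\ba^{-1}) > 1$ and fix any $r \in \N$ with $1 < r < \decay(\ba^{-1})$. Since $\decay(\ba^{-1}) > r$ means $\sum_\nu \alpha_\nu^{-1/r} < \infty$, in particular $\alpha_\nu^{-1} \le C \cdot (\nu+1)^{-r}$ fails pointwise in general, but what we actually need is only a comparison of the \emph{norms}: for $\nu$ large, $\decay(\ba^{-1}) > r$ forces $\alpha_\nu \ge (\nu+1)^{r}$ for all but finitely many $\nu$ (otherwise $\sum \alpha_\nu^{-1/r}$ would diverge along a subsequence). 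Adjusting the finitely many small weights changes the space $H(k)$ only up to an equivalent norm, so without loss of generality $\alpha_\nu \ge \alpha_\nu^{[r]} := (\nu+1)^r$ for every $\nu$. Then the identity map $H(k) \hookrightarrow H(k^{[r]})$ is a norm-one embedding (comparing the kernel expansions coefficientwise, exactly as in the first embedding step of Theorem~\ref{t2}), so every quadrature formula valid on $H(k^{[r]})$ performs at least as well on $H(k)$; hence $\err_n(k) \le \err_n(k^{[r]})$ and $\dec(k) \ge \dec(k^{[r]}) = r$ by Theorem~\ref{ta4}. Letting $r \uparrow \decay(\ba^{-1})$ through integers gives $\dec(k) \ge \decay(\ba^{-1})$.

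For part (ii) with $r \ge 1$: if $r \in \N$, the equality $\dec(k^{[r]}) = r$ is immediate from \eqref{g5} (or from Theorem~\ref{ta4} for the upper bound on $\err_n$ plus the matching lower bound of \citet{DILP18}), and $\decay((\ba^{-1})) = \decay(((\nu+1)^{-r})_\nu) = r$ is a direct computation from \eqref{g90}. For non-integer $r \ge 1$, write $\underline{r} := \lfloor r \rfloor \ge 1$; since $(\nu+1)^{r} \ge (\nu+1)^{\underline r}$ we get the norm-one embedding $H(k^{[r]}) \hookrightarrow H(k^{[\underline r]})$, hence $\dec(k^{[r]}) \ge \dec(k^{[\underline r]}) = \underline r$. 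To push past $\underline r$ up to $r$, I would invoke part (i): since $\decay(((\nu+1)^{-r})_\nu) = r > 1$, part (i) gives $\dec(k^{[r]}) \ge r$. The matching upper bound $\dec(k^{[r]}) \le r$ for all $r > 1/2$ (integer or not) comes from a lower bound for $\err_n(k^{[r]})$; here I would either cite the non-integer version of \citet[Thm.~1]{DILP18} directly, or derive it from the integer case by the embedding $H(k^{[\lceil r \rceil]}) \hookrightarrow H(k^{[r]})$, which yields $\err_n(k^{[r]}) \ge \err_n(k^{[\lceil r \rceil]}) \cdot (\text{const})$ only in the wrong direction — so the cleanest route is to cite the lower bound for general $r$. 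Either way, $\dec(k^{[r]}) = r = \decay(\ba^{-1})$.

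For part (ii) with $1/2 < r < 1$: the claim is only the inequality $\dec(k^{[r]}) \le \decay(\ba^{-1}) = r$, so I only need the lower bound for the minimal errors, i.e. $\err_n(k^{[r]}) \gtrsim n^{-r}$ (up to logarithmic factors, which are irrelevant for the decay). This again follows from \citet[Thm.~1]{DILP18} in the form valid for all $r > 1/2$, or from a standard bump-function / fooling argument: one exhibits a function in the unit ball of $H(k^{[r]})$ supported (in a quantitative sense) away from the $n$ sample nodes with integral bounded below by $c \cdot n^{-r}$, using that the reproducing kernel $k^{[r]}$ is well-defined precisely when $r > 1/2$. The identity $\decay(((\nu+1)^{-r})_\nu) = r$ is the same elementary computation as before.

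The main obstacle is the upper-bound direction in part (ii), i.e. the lower bounds for $\err_n(k^{[r]})$ for \emph{non-integer} $r$ (both $r \ge 1$ and $1/2 < r < 1$): the construction of asymptotically optimal quadrature in Theorem~\ref{ta4} was stated only for $r \in \N$, and the classical $W^r$ theory used there is cleanest for integer $r$. I expect the resolution to be a citation to the general-$r$ lower bound of \citet{DILP18} (whose proof does not require integrality), together with the observation that the reduction from general $\ba$ to the special weights \eqref{g12}, via the coefficientwise norm comparison of Hermite-kernel expansions, is exactly the elementary embedding already used repeatedly in Section~\ref{s3}.
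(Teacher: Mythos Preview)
There is a genuine gap in your proof of part~(i): the step ``Letting $r \uparrow \decay(\ba^{-1})$ through integers gives $\dec(k) \ge \decay(\ba^{-1})$'' is false as stated. If $\decay(\ba^{-1}) = s$ is not an integer, the supremum of the integers strictly below $s$ is only $\lfloor s \rfloor$, so your embedding argument yields merely $\dec(k) \ge \lfloor s \rfloor$, not $\dec(k) \ge s$. The same defect then propagates to part~(ii) for non-integer $r \ge 1$: when you ``invoke part~(i)'' to obtain $\dec(k^{[r]}) \ge r$, your version of part~(i) only delivers $\dec(k^{[r]}) \ge \lfloor r \rfloor$. The coefficientwise embedding $H(k^{[r]}) \hookrightarrow H(k^{[\underline r]})$ with integer $\underline r$ cannot close this gap, and Theorem~\ref{ta4} is available only for $r \in \N$.

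The missing idea, which the paper supplies, is \emph{interpolation}: the spaces $H(k^{[r]})$, $r > 1/2$, form a quadratic interpolation scale, so for $\underline r < r < \overline r$ and $r = (1-\theta)\underline r + \theta\overline r$ one has $\err(A,k^{[r]}) \le \err(A,k^{[\underline r]})^{1-\theta}\,\err(A,k^{[\overline r]})^{\theta}$ for every quadrature formula $A$. Combined with Remark~\ref{r10} (a single sequence of quadrature formulas is simultaneously optimal for all integer smoothness up to $\overline r$), this gives $\err_n(k^{[r]}) \le \err_n(k^{[\underline r]})^{1-\theta}\,\err_n(k^{[\overline r]})^{\theta}$ and hence $\dec(k^{[r]}) \ge (1-\theta)\underline r + \theta\overline r = r$ by choosing $\underline r,\overline r \in \N$. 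With $\dec(k^{[r]}) = r$ established for all real $r \ge 1$, your embedding argument for part~(i) then works with real $r \uparrow \decay(\ba^{-1})$. Moreover, the \emph{same} interpolation inequality, rearranged as $\dec(k^{[\underline r]}) \le \frac{1}{1-\theta}\bigl(\dec(k^{[r]}) - \theta\,\dec(k^{[\overline r]})\bigr)$ with $r,\overline r \in \N$, yields $\dec(k^{[\underline r]}) \le \underline r$ for every $\underline r > 1/2$; this covers the upper bound for non-integer $r$ (including the range $1/2 < r < 1$) without any appeal to a non-integer version of \citet[Thm.~1]{DILP18}, which you correctly identified as the main obstacle in your approach.
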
 

\begin{proof}
At first, we consider Fourier weights according to 
\eqref{g12}, where we obtain 
\[
\dec (k^{[r]}) = r
\]
for $r \in \N$ immediately from \eqref{g5}.

It has been observed in 
\citet[Rem.~2.3 and Exmp.~3.5]{GHHR2021} that 
the spaces $H(k^{[r]})$ with $r>1/2$ form 
an interpolation scale with respect to quadratic 
interpolation by means of the $K$-method as well as the $J$-method. 
Since optimal algorithms for integration on $H(k^{[r]})$ are linear 
algorithms and the worst-case error of a linear algorithm is the norm of 
a linear functional on $H(k^{[r]})$, also the worst-case errors for 
integration with a fixed linear algorithm interpolate. This carries 
then over to the errors $\err_{n} (k^{[r]})$. 

More precisely, let $1/2 < \underline{r} < r < \overline{r}$. 
Then there 
exists a uniquely determined $ \theta \in {]0,1[}$ such that 
\[
r = (1-\theta) \underline{r} + \theta \overline{r}.
\] 
Consequently, the Fourier weights of the reproducing kernel Hilbert 
spaces $H(k^{[r]})$, $H(k^{[\underline{r}]})$, and
$H(k^{[\overline{r}]})$ satisfy 
\[
(\nu +1)^r = \left( (\nu + 1)^{\underline{r}} \right)^{1-\theta} 
\cdot 
\left( (\nu + 1)^{\overline{r}} \right)^{\theta},
\]
and we get from the quadratic interpolation method
\[
\err (A,k^{[r]}) \le 
\err (A,k^{[\underline{r}]})^{1-\theta} \cdot \err
(A,k^{[\overline{r}]})^{\theta}
\]
for every quadrature formula $A$, 
cf.\ \citet[Rem.~2.3 and Exmp.~3.5]{GHHR2021}. 
Using Remark~\ref{r10}, we obtain
\[
 \err_{n} (k^{[r]}) \le 
\err_{n} (k^{[\underline{r}]})^{1-\theta} \cdot \err_{n}
(k^{[\overline{r}]})^{\theta}.
\]
This further implies 
\begin{equation} \label{eq:interpol}
\dec (k^{[r]}) \ge  (1-\theta) \dec(k^{[\underline{r}]}) + \theta
\dec(k^{[\overline{r}]}).
\end{equation}
Hence, for arbitrary $r>1$ we may choose
$\underline{r},\overline{r} \in \N$ with 
$\underline{r}<r<\overline{r}$ to obtain 
\[
\dec (k^{[r]}) \ge r.
\]

On the other hand, using \eqref{eq:interpol} in the equivalent form
\[
 \dec(k^{[\underline{r}]}) \le \frac{1}{1-\theta} \cdot 
\left( \dec(k^{[r]}) - \theta \dec(k^{[\overline{r}]}) \right)
\]
with $r,\overline{r} \in \N$ and $\underline{r}>1/2$ 
shows that 
\[
\dec (k^{[\underline{r}]}) \le \underline{r}
\]
for every $\underline{r}>1/2$.

Finally, we consider any non-decreasing sequence $\ba$ of Fourier
weights with $\decay(\ba^{-1}) > 1$. For every $1 \leq r <
\decay(\ba^{-1})$ the space $H(k)$ is continuously embedded into
$H(k^{[r]})$, and therefore
\[
\dec(k) \geq \dec(k^{[r]}) = r
\]
which implies $\dec(k) \geq \decay (\ba^{-1})$.
\end{proof}

\subsubsection{$L^2$-Approximation}

For Fourier weights of the form \eqref{g12} 
the asymptotic behavior of the $n$-th minimal errors 
for $L^2$-approximation is even known for all $r > 1$, where we have 
\begin{equation}\label{g20}
\err_n (k^{[r]}) \asymp n^{-r/2},
\end{equation}
see \citet[Thm.~3.5]{DungNguyen22}. 

As for integration, the multivariate case with the $d$-fold tensor 
product of the kernel $k^{[r]}$ and the $d$-dimensional standard normal 
distribution is studied for $L^2$-approximation, too, in 
\citet{DungNguyen22}, which results 
in the additional factor
$(\ln(n))^{(d-1) \cdot r /2}$ in \eqref{g20}.

In the further analysis of the approximation problem
we use recent general results on upper bounds for the 
minimal worst-case error for $L^2$-approxima\-tion on separable
RKHSs using function values. 
The most recent and sharp estimates, which
are also employed in \citet{DungNguyen22}, are from \citet{DKM22}. 
For our estimates of $\dec(k)$ it would be also sufficient 
(but perhaps less elegant) to use earlier results, which are suboptimal 
only by some logarithmic factors, see \citet{DKM22} for references.

Before stating the corresponding theorem, 
we provide an auxiliary lemma concerning the decay of
$\bb := (\beta_n)_{n \in \N}$ given by 
\[
\beta_n := 
\left( \frac{1}{n} \sum_{\nu\ge n} \alpha_\nu^{-1} \right)^{1/2}
\]
under the assumption
\begin{equation} \label{eq:summability}
\sum_{\nu\in \N}  \alpha_{\nu}^{-1} < \infty.
\end{equation}

\begin{lemma}\label{Lemma:Auxiliary_t4}
If \eqref{eq:summability} is satisfied then
\begin{equation*}
\decay (\bb) = \decay( \ba^{-1/2}). 
\end{equation*}
\end{lemma}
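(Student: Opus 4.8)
The plan is to show the two inequalities $\decay(\bb) \le \decay(\ba^{-1/2})$ and $\decay(\bb) \ge \decay(\ba^{-1/2})$ separately, working throughout with the logarithmic characterization of $\decay$ recorded after \eqref{g90}, and exploiting the fact that $\ba$ is non-decreasing (so that $\ba^{-1}$ and hence $\ba^{-1/2}$ are non-increasing). Write $\sigma := \decay(\ba^{-1/2})$; note $\sigma = 2 \decay(\ba^{-1})$, and \eqref{eq:summability} forces $\decay(\ba^{-1}) \ge 1$, hence $\sigma \ge 2$.

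First I would prove $\decay(\bb) \ge \sigma$. Fix any $\tau$ with $0 < \tau < \decay(\ba^{-1})$, so $\sum_{\nu} \alpha_\nu^{-1/\tau} < \infty$; equivalently $\alpha_\nu^{-1} = o(\nu^{-\tau})$ up to the summability, and more usefully $\sum_{\nu \ge n} \alpha_\nu^{-1} \le C \, n^{-(\tau-1)}$ when $\tau > 1$ — wait, that requires care, so instead I would argue directly: since $\ba^{-1}$ is non-increasing, $\sum_{\nu \ge n}\alpha_\nu^{-1} \le \sum_{\nu \ge n} \alpha_\nu^{-1}$ and one has the elementary tail bound $\sum_{\nu \ge n} \alpha_\nu^{-1} \cdot n^{\tau - 1} \to 0$ whenever $\sum_\nu \alpha_\nu^{-1} \nu^{\tau-1-\epsilon}<\infty$. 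Cleaner: use that $\decay(\ba^{-1}) > 1$ implies, for every $s < \decay(\ba^{-1})$, that $\sup_\nu \alpha_\nu^{-1}\nu^{s} < \infty$ is \emph{false} in general but $\sum_\nu \alpha_\nu^{-1}\nu^{s-1}<\infty$ holds for $s<\decay(\ba^{-1})$ by the definition \eqref{g90} applied to the sequence $(\alpha_\nu^{-1}\nu^{s-1})$ — hmm, that is not literally the definition either. The robust route is: $\beta_n^2 = \tfrac1n\sum_{\nu\ge n}\alpha_\nu^{-1}$, and since $\ba^{-1}$ is non-increasing, $\sum_{\nu\ge n}\alpha_\nu^{-1} \le \int_{n-1}^\infty \alpha_{\lceil t\rceil}^{-1}\,dt$ and also $\alpha_n^{-1}\le \tfrac{2}{n}\sum_{n/2\le \nu< n}\alpha_\nu^{-1}$. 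Then $\ln(\beta_n^{-1}) = \tfrac12\ln n - \tfrac12\ln\!\big(\sum_{\nu\ge n}\alpha_\nu^{-1}\big)$, and I would bound the tail sum above and below in terms of $\alpha_n^{-1}$ and of partial sums, pushing everything through $\liminf_n \ln(\beta_n^{-1})/\ln n$ and $\liminf_\nu \ln(\alpha_\nu^{1/2})/\ln\nu = \sigma$.

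Concretely, for the lower bound on $\decay(\bb)$: drop all but the first term, $\beta_n^2 \ge \tfrac1n \alpha_n^{-1}$, hence $\ln(\beta_n^{-1}) \le \tfrac12\ln n + \tfrac12\ln\alpha_n$, which gives an \emph{upper} bound on $\ln(\beta_n^{-1})$ — the wrong direction. So for the lower bound I instead need an \emph{upper} bound on the tail $\sum_{\nu\ge n}\alpha_\nu^{-1}$: since $\decay(\ba^{-1})=\sigma/2$, for any $\tau<\sigma/2$ we have $\sum_\nu \alpha_\nu^{-1/\tau}<\infty$, so $\alpha_\nu^{-1/\tau}\to 0$ summably and in particular $\alpha_\nu^{-1} \le c_\tau\,\nu^{-\tau}$ fails pointwise but $\sum_{\nu\ge n}\alpha_\nu^{-1}$ is dominated by $\sum_{\nu\ge n}(\alpha_\nu^{-1/\tau})^\tau$; using non-increase and comparing with the convergent series one gets $\sum_{\nu\ge n}\alpha_\nu^{-1} \le c_\tau\, n^{1-\tau}$ for $\tau>1$ by the standard Abel/Olivier-type estimate ($a_n$ non-increasing and $\sum a_n^{1/\tau}<\infty \Rightarrow a_n = o(n^{-\tau})$, hence $\sum_{\nu\ge n}a_\nu \le \sum_{\nu\ge n}c\,\nu^{-\tau}\le c' n^{1-\tau}$). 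Therefore $\beta_n^2 \le c' n^{-\tau}$, i.e. $\ln(\beta_n^{-1}) \ge \tfrac{\tau}{2}\ln n - c''$, so $\decay(\bb) = \liminf \ln(\beta_n^{-1})/\ln n \ge \tau/2$. Wait — that yields $\decay(\bb)\ge \sigma/4$, not $\sigma/2$; the factor is off by the $1/n$ in the definition of $\beta_n$. Re-examining: $\beta_n^2 = \tfrac1n\sum_{\nu\ge n}\alpha_\nu^{-1} \le \tfrac1n\cdot c'n^{1-\tau} = c' n^{-\tau}$. Hmm, so $\beta_n \le \sqrt{c'}\,n^{-\tau/2}$ and $\decay(\bb)\ge \tau/2 \to \sigma/4$?? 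That contradicts the claim. Let me recompute the exponent: if $\alpha_\nu \asymp \nu^r$ then $\decay(\ba^{-1}) = r$, $\sigma = 2r$; and $\sum_{\nu\ge n}\alpha_\nu^{-1}\asymp n^{1-r}$, so $\beta_n^2 \asymp \tfrac1n n^{1-r} = n^{-r}$, $\beta_n\asymp n^{-r/2}$, $\decay(\bb) = r/2 = \sigma/4$. But the lemma asserts $\decay(\bb) = \decay(\ba^{-1/2}) = \sigma/2 \cdot$ — no: $\decay(\ba^{-1/2})$ for $\alpha_\nu\asymp\nu^r$ is $\sup\{\tau: \sum \nu^{-r/(2\tau)}<\infty\} = \sup\{\tau: r/(2\tau)>1\} = r/2$. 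So $\decay(\ba^{-1/2}) = r/2 = \decay(\bb)$. Good — I mislabeled $\sigma$; in fact $\decay(\ba^{-1/2}) = \tfrac12\decay(\ba^{-1}) = \tfrac12 r$, and everything is consistent.

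So the corrected plan: set $\rho := \decay(\ba^{-1}) \ge 1$; I will show $\decay(\bb) = \rho/2 = \decay(\ba^{-1/2})$. \textbf{Upper bound} $\decay(\bb)\le \rho/2$: from $\beta_n^2 \ge \tfrac1n\alpha_n^{-1}$ get $\ln(\beta_n^{-1}) \le \tfrac12(\ln n + \ln\alpha_n)$, hence $\liminf_n \ln(\beta_n^{-1})/\ln n \le \tfrac12(1 + \limsup_n \ln\alpha_n/\ln n)$. This needs $\limsup \ln\alpha_n/\ln n \le \rho - 1$, i.e. control of $\limsup$, which is not given — only $\liminf \ln\alpha_n/\ln n = \rho$. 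Here is the main obstacle: passing between $\liminf$ and $\limsup$ of $\ln\alpha_n/\ln n$. I resolve it by not using the single term $\alpha_n^{-1}$ but rather a block: since $\ba^{-1}$ is non-increasing, $\beta_n^2 \ge \tfrac1n\sum_{n\le \nu< 2n}\alpha_\nu^{-1} \ge \tfrac1n \cdot n\,\alpha_{2n}^{-1} = \alpha_{2n}^{-1}$, so $\beta_n \ge \alpha_{2n}^{-1/2}$ and $\decay(\bb) \le \decay((\alpha_{2n}^{-1/2})_n) = \decay(\ba^{-1/2})$ — this last equality because shifting/dilating the index by a bounded factor does not change $\decay$ (clear from \eqref{g90}). \textbf{Lower bound} $\decay(\bb)\ge \rho/2$: as above, for any $\tau<\rho$ the non-increasing summability $\sum\alpha_\nu^{-1/\tau}<\infty$ gives (Olivier's theorem: $a_\nu\downarrow$, $\sum a_\nu^{1/\tau}<\infty \Rightarrow \nu\, a_\nu^{1/\tau}\to 0$, hence $a_\nu = o(\nu^{-\tau})$) a constant $c$ with $\sum_{\nu\ge n}\alpha_\nu^{-1} \le c\sum_{\nu\ge n}\nu^{-\tau} \le c' n^{1-\tau}$ (using $\tau\ge 1$; the boundary $\tau=1$ needs a separate $\log$ estimate but is not needed since we may take $\tau<\rho$ and, if $\rho=1$, the claimed value is $1/2$ and any $\tau<1$ suffices after noting $n^{1-\tau}$ still gives $\beta_n^2\le c'n^{-\tau}$, $\decay(\bb)\ge\tau/2\to 1/2$). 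Then $\beta_n^2 \le c' n^{-\tau}$ forces $\ln(\beta_n^{-1}) \ge \tfrac\tau2\ln n - c''$, so $\decay(\bb)\ge\tau/2$; letting $\tau\uparrow\rho$ gives $\decay(\bb)\ge\rho/2$. Combining the two bounds yields $\decay(\bb) = \rho/2 = \decay(\ba^{-1/2})$.

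The step I expect to be the real work is the lower bound, specifically justifying the tail estimate $\sum_{\nu\ge n}\alpha_\nu^{-1} = O(n^{1-\tau})$ for every $\tau<\decay(\ba^{-1})$ from the bare definition of $\decay$ together with monotonicity — this is an Olivier-type / Abel-summation argument and, while standard, is the one place requiring a genuine (if short) calculation rather than a formal manipulation. Everything else reduces to the logarithmic formula for $\decay$, the elementary blocking inequality $\sum_{n\le\nu<2n}\alpha_\nu^{-1}\ge n\alpha_{2n}^{-1}$, and invariance of $\decay$ under bounded reindexing.
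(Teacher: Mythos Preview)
Your argument is correct and self-contained, but it proceeds differently from the paper's proof. The paper does not separate the two inequalities via Olivier's theorem and a blocking estimate; instead it invokes the \emph{weak discrete Stechkin inequality} (cited from \citet{JU21}): for every $q>1$ there are constants $c,C>0$ with
\[
c^{-1}\sup_{n\ge 1} n\Bigl(\tfrac{1}{n}\sum_{\nu\ge n}\omega_\nu^{q}\Bigr)^{1/q}
\le \sup_{n\ge 1} n\,\omega_n
\le C\sup_{n\ge 1} n\Bigl(\tfrac{1}{n}\sum_{\nu\ge n}\omega_\nu^{q}\Bigr)^{1/q}
\]
for non-increasing $(\omega_\nu)$. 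Applying this with $q=2\tau$ and $\omega_\nu=\alpha_\nu^{-1/(2\tau)}$ shows in one stroke that $\decay(\bb)>\tau$ iff $\decay(\ba^{-1/2})>\tau$ for every $\tau>1/2$, which settles the case $\decay(\ba^{-1})>1$; the borderline $\decay(\ba^{-1})=1$ is then handled (as you also do, in effect) by the crude bound $\beta_n^2\le n^{-1}\sum_\nu\alpha_\nu^{-1}$. Your route---the block estimate $\beta_n^2\ge\alpha_{2n}^{-1}$ for one direction and Olivier's theorem plus a tail comparison $\sum_{\nu\ge n}\alpha_\nu^{-1}=O(n^{1-\tau})$ for the other---is essentially a hands-on proof of the special case of Stechkin that is needed here. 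The trade-off is that the paper's version is shorter and exports the real work to a known reference, whereas your version is entirely elementary and makes explicit the two mechanisms (dyadic blocking and monotone tail decay) that drive the equivalence.
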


\begin{proof}
Due to \eqref{eq:summability}, we have $\decay( \ba^{-1}) \ge 1$.

The weak discrete Stechkin inequality assert that for $q>1$ 
there exist 
constants $c,C>0$ depending only on $q$ such that for all
non-increasing sequences 
$(\omega_\nu)_{\nu \in \N}$ of positive real numbers we have
\[
 c^{-1} \sup_{n \ge 1} n 
\left( \frac{1}{n} \sum_{\nu \ge n} \omega_\nu^q \right)^{1/q}
 \le \sup_{n \ge 1} n \omega_n \le 
 C \sup_{n \ge 1} n 
\left( \frac{1}{n} \sum_{\nu \ge n} \omega_\nu^q \right)^{1/q}.
\]  
For this and related inequalities including optimal constants,
see \citet{JU21}.
Applying this two-sided estimate with $q := 2 \tau$ and 
$\omega_\nu := \alpha_\nu^{-1/(2\tau)}$ shows that 
$\decay( \ba^{-1/2}) > \tau$ if and only if 
$\decay( \bb ) > \tau$, provided that $\tau > 1/2$.
Hence  $\decay( \bb ) = \decay( \ba^{-1/2})$ follows in the case 
$\decay( \ba^{-1}) > 1$.

In the borderline case $\decay( \ba^{-1}) = 1$ we use
\[
\beta_n^2 \le \frac{1}{n} \sum_{\nu \in \N} \alpha_\nu^{-1}
\]
and \eqref{eq:summability} to conclude that 
$\decay (\bb) \ge 1/2 = \decay( \ba^{-1/2})$.
As observed above, $\decay (\bb) > 1/2$ would imply 
$\decay( \ba^{-1/2}) > 1/2$. Hence 
$\decay (\bb)= 1/2 = \decay( \ba^{-1/2})$.
\end{proof}

\begin{theo}\label{t4}
For $L^2$-approximation on $H(k)$ the following holds true.
If \eqref{g11} is satisfied then
\[
 \dec (k) \leq \decay (\ba^{-1/2}).
\]
If \eqref{eq:summability} is satisfied then 
\[
 \dec (k) = \decay (\ba^{-1/2}).
\]
\end{theo}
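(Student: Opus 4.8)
The plan is to prove the two claims of Theorem~\ref{t4} separately, treating the upper bound for $\dec(k)$ (a lower bound for the $n$-th minimal errors) by a classical lower-bound argument, and the matching estimate by invoking a sharp sampling result in terms of the singular values of the embedding $H(k) \hookrightarrow L^2(\mu_0)$. For the first claim, note that the squared singular numbers of the embedding are precisely the Fourier weights $\alpha_\nu^{-1}$ with $\nu \in \N$ (plus the eigenvalue $1$ for the constant function $h_0$), since $(h_\nu)_{\nu}$ is the orthonormal basis of $L^2(\mu_0)$ diagonalizing $k$. The standard lower bound for $L^2$-approximation using $n$ function values — obtained by choosing a function in $H(k)$ that vanishes at the $n$ sample points but has a controlled $L^2$-norm, e.g. via a bump/linear-algebra argument in the span of the first $n+1$ eigenfunctions — gives $\err_n(k) \gtrsim \alpha_{n+1}^{-1/2}$, or more precisely $\err_n(k)^2 \gtrsim \frac{1}{n}\sum_{\nu \ge cn}\alpha_\nu^{-1}$, i.e. $\err_n(k) \gtrsim \beta_{cn}$ up to constants. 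Since $\bb$ is non-increasing, $\decay((\err_n(k))_n) \le \decay(\bb)$, and Lemma~\ref{Lemma:Auxiliary_t4} identifies $\decay(\bb) = \decay(\ba^{-1/2})$ under \eqref{eq:summability}; under the weaker assumption \eqref{g11} one argues directly that the lower bound $\err_n(k) \gtrsim \alpha_{n+1}^{-1/2}$ already yields $\dec(k) \le \decay(\ba^{-1/2})$.

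For the matching upper bound under \eqref{eq:summability}, the idea is to feed the eigenvalue sequence $(\alpha_\nu^{-1})_{\nu}$ into the recent general sampling bounds from \citet{DKM22}. Those results state that for $L^2$-approximation on a separable RKHS using $n$ well-chosen function evaluations (e.g.\ via weighted least squares at random points, or the explicit constructions in that paper), the worst-case error is bounded, up to a universal constant and possibly a mild logarithmic factor absorbed by the $\liminf$ definition of $\decay$, by $\bigl(\frac{1}{n}\sum_{\nu \ge cn}\sigma_\nu^2\bigr)^{1/2}$ where $\sigma_\nu^2$ are the squared singular values. Here this is exactly $\beta_{cn}$ up to constants, so $\err_n(k) \lesssim \beta_{cn}$, hence $\dec(k) \ge \decay(\bb) = \decay(\ba^{-1/2})$ by Lemma~\ref{Lemma:Auxiliary_t4}. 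Combined with the first part this gives equality. One must check the hypotheses of the \citet{DKM22} bound: separability of $H(k)$ (clear, since it has a countable orthonormal basis $(\alpha_\nu^{1/2}h_\nu)_\nu$ together with $h_0$), and summability of the singular values, which is \eqref{eq:summability} together with the eigenvalue $1$.

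The main obstacle is matching the decay rates cleanly through the logarithmic gaps: the general sampling theorems typically produce bounds of the form $n^{-1}\sum_{\nu\ge n}\sigma_\nu^2$ possibly with an extra $\log n$ factor, and one must verify that such factors do not affect $\decay$, which is immediate from the $\liminf$ characterization $\decay(\bz) = \liminf_{n}\ln(z_n^{-1})/\ln n$ recorded after \eqref{g90}. A secondary technical point is that the \citet{DKM22} estimates are stated for the singular values of the embedding operator, so one should record explicitly that the ordered squared singular numbers of $\mathrm{id}\colon H(k)\to L^2(\mu_0)$ are the rearrangement of $\{1\}\cup\{\alpha_\nu^{-1}:\nu\in\N\}$ in non-increasing order; since $\ba$ is non-decreasing this rearrangement is, up to the single extra entry $1$, just $(\alpha_\nu^{-1})_\nu$ itself, so $\decay(\ba^{-1/2})$ is unaffected. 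With these bookkeeping matters settled the proof is short, as it reduces to the auxiliary Lemma~\ref{Lemma:Auxiliary_t4} plus citing the lower-bound and upper-bound machinery.
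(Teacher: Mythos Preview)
Your approach is essentially the same as the paper's: the upper bound on $\dec(k)$ comes from the information-theoretic lower bound $\err_n(k)\ge \alpha_n^{-1/2}$ (singular values of the embedding bound minimal errors from below), and the matching lower bound on $\dec(k)$ comes from \citet{DKM22} together with Lemma~\ref{Lemma:Auxiliary_t4}. Two minor remarks: for the first direction the paper simply invokes $\err_n(k)\ge\alpha_n^{-1/2}$ directly and does not pass through $\beta_n$, so your detour via $\err_n(k)\gtrsim\beta_{cn}$ and Lemma~\ref{Lemma:Auxiliary_t4} there is unnecessary (and indeed would require \eqref{eq:summability}, which you do not have for that part); and the version of \citet[Thm.~1]{DKM22} the paper cites gives $\err_{cn}(k)\le\beta_n$ without logarithmic losses, so your discussion of absorbing $\log n$ factors is not needed.
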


\begin{proof}
The sequence of singular values of the embedding of $H(k)$ 
into $L^2(\mu_0)$ is $ \ba^{-1/2}$.
Since the singular values, as the minimal worst-case errors of 
$L^2$-approximation using general linear information, are lower bounds 
for the minimal worst-case errors of $L^2$-ap\-proxi\-ma\-tion using 
function values, more precisely, since 
\[
\err_{n} (k) \ge \alpha_n^{-1/2}
\]
for all $n\in \N$, the inequality 
$\dec (k) \le \decay (\ba^{-1/2})$ follows. 
 
For the reverse inequality, we use \citet[Thm.~1]{DKM22},
which shows that there exists a universal constant $c \in \N$ such that
\[
\err_{c n} (k) \le  \beta_n
\] 
for every separable RKHS $H(k)$ with square-summable singular
values $\alpha_\nu$ of its identical embedding into any $L^2$-space.
In the present case the square-summability is guaranteed by
\eqref{eq:summability}.
Since the $n$-th minimal errors $\err_n(k)$ form a non-increasing
sequence, the decay $ \dec (k)$ of   
$(\err_{n} (k))_{n\in \N_0}$ is the same as the decay of the 
subsequence  $(\err_{cn} (k))_{n\in \N_0}$. 
We conclude that $\dec (k) \geq \decay (\bb)$, and it
remains to observe Lemma~\ref{Lemma:Auxiliary_t4}.
\end{proof}

\begin{rem}\label{r3}
The approach yielding the error bound from \citet[Thm.~1]{DKM22}
is based on a least square estimator using independent random sample 
points drawn with respect to a suitable density. Thus, the proof is 
non-constructive, as it only ensures the existence of a good
deterministic algorithm using function values. Of course,
it would be desirable 
to have explicit constructions for sample points achieving this 
univariate error decay.
\end{rem}

\subsection{Functions of Infinitely Many Variables}\label{s4.2}

Now we consider integration and $L^2$-approximation 
with respect to the product measure $\mu$ of the univariate 
standard normal distribution $\mu_0$ for functions from the 
Hermite space $H(K)$, which depend on infinitely many variables.
As before, we assume that the corresponding Fourier weights satisfy
(\ha)--(\hc), which implies $\mu(\X)=1$ for
the maximal domain $\X$ of $K$. Formally, we consider 
the restriction $\eta$ of $\mu$ onto $E:= \X$.

In our analysis of algorithms we employ the unrestricted subspace 
sampling model, which has been introduced in \citet{KuoEtAl10}. 
This model is based on a non-decreasing cost function 
$\$\colon \N_0 \to \left[1,\infty\right[$ and some 
nominal value $a\in \R$ in the following way. For $\bx \in \R^\N$ the 
number of active variables is defined by
\[
\Act_a(\bx) :=\#\{j\in\N \colon x_j \neq a\},
\]
and the evaluation of any function $f \in H(K)$ is permitted at
any point 
\[
\bx \in \X_a := \{ \bx \in \R^\N \colon \Act_a(\bx) < \infty\}
\subsetneq \X;
\]
the corresponding cost is given by $\$ (\Act_a(\bx))$.
Accordingly, we define
\[
\cost(\bx) := 
\begin{cases}
\$(\Act_a(\bx)) & \text{if $x \in \X_a$,}\\
\infty & \text{otherwise}
\end{cases}
\]
for $x \in \X$.
Throughout this paper we assume that 
there exist $c_1,c_2 > 0$ such that
\begin{equation}\label{g82}
c_1 \cdot n \leq \$(n) \leq \exp(c_2 \cdot n)
\end{equation}
for all $n \in \N$,
cf., e.g., \citet{KuoEtAl10} and \citet{PW11}.
The lower bound in \eqref{g82} is most reasonable,
since $\Act_a(\bx)$ real numbers are already needed to specify any
point $\bx \in \X_a$, and the upper bound is very generous.

In the following theorem we present an upper bound and a lower bound 
for the decay $\dec(K)$ of the $n$-th minimal errors $\err_n(K)$. 
We use $\ba_1^{-1}$ and $\bg$ to denote the sequences
$(\alpha^{-1}_{1,j})_{j\in \N}$ and $(\gamma_j)_{j \in \N}$,
which satisfy $\decay(\ba_1^{-1}) \geq 1$ and $\decay(\bg) \geq 1$,
see (\hb) and (\hc).

\begin{theo}\label{t3}
For integration and $L^2$-approximation we have
\[
\min \left( \dec (k_1), \frac{\decay(\bg) - 1}{2} \right)
\le \dec (K) \le 
\min \left( \dec (k_1), \frac{\decay(\ba_1^{-1}) - 1}{2} \right).
\]
\end{theo}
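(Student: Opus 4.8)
The plan is to derive the two bounds in Theorem~\ref{t3} separately by exploiting the chain of embeddings established in Sections~\ref{s3.1} and \ref{s3.2}, together with the univariate results of Section~\ref{s4.1} and known results on minimal errors for the superposition kernels $M_a^\up$ and $M_a^\lo$. For the \emph{upper bound} on $\dec(K)$ (i.e.\ a lower bound on $\err_n(K)$), I would use Theorem~\ref{Lemma:Tensorprodukt2_dim_Raeume}(iii), which says that $\{f|_\X \colon f \in H(M_a^\lo)\} \subseteq H(K)$ via a restriction map of finite norm. Thus the $n$-th minimal errors for the problem on $H(K)$ are bounded below (up to a constant) by those on $H(M_a^\lo)$, so $\dec(K) \le \dec(M_a^\lo)$. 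Now $M_a^\lo$ is a superposition of weighted anchored tensor products with product weights $\alpha_{1,\bu}^{-1} := \prod_{j\in\bu}\alpha_{1,j}^{-1}$ built from the one-dimensional space $\spann\{h_0,h_1\}$, which is just $H(k^{[r]})$-type data with the single nontrivial Fourier weight $\alpha_{1,j}$. For such kernels, the decay of the minimal errors is governed by the decay of the weight sequence $\ba_1^{-1}$: I expect $\dec(M_a^\lo) \le (\decay(\ba_1^{-1})-1)/2$ from a lower-bound argument in the spirit of the tractability/lower-bound results for MDM-type spaces (e.g.\ reducing to finitely many variables and using the one-dimensional singular value $\alpha_{1,j}^{-1/2}$ as in the proof of Theorem~\ref{t4}). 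Simultaneously, the trivial embedding $H(K) \hookrightarrow H(k_1)$ (fixing all but the first variable at the anchor) gives $\dec(K) \le \dec(k_1)$. Taking the minimum yields the claimed upper bound.

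For the \emph{lower bound} on $\dec(K)$ (i.e.\ an upper bound on $\err_n(K)$), I would use Theorem~\ref{t2}(iii): the restriction $f \mapsto f|_{\X^\up}$ maps $H(K)$ continuously into $H(M_a^\up)$ with norm at most $C^\up(a) = \prod_j (1+\gamma_j c^\up(a))^{1/2} < \infty$. Since $\mu(\X^\up)=1$, integration and $L^2(\mu)$-approximation transfer, so $\err_n(K) \le C^\up(a)\cdot \err_n(M_a^\up)$ and hence $\dec(K) \ge \dec(M_a^\up)$. The kernel $M_a^\up$ is exactly the MDM-friendly structure — superposition of weighted anchored tensor products with product weights $\gamma_\bu$, built from the one-dimensional component $H(1+\gamma_j m_a^\up) = H(k_1)$ (as a vector space, uniformly in $j$). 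For such spaces one has a general lower bound of the form $\dec(M_a^\up) \ge \min(\dec(k_1), (\decay(\bg)-1)/2)$: the first term comes from the univariate decay inside each coordinate, the second from the summability rate of the weights $\gamma_j$, via an explicit MDM construction. Here I would invoke the MDM analysis developed in Section~\ref{s4.3} (the upper bounds are "attained constructively with the help of multivariate decomposition methods," per the abstract), combined with the univariate quadrature/least-squares constructions of Section~\ref{s4.1} (Theorems~\ref{ta4}, \ref{t5}, \ref{t4}) applied on each coordinate. The factor $c^\up(a)$ enters the cost accounting but not the decay rate.

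The main obstacle, as I see it, is the lower-bound half — establishing $\dec(M_a^\up) \ge \min(\dec(k_1),(\decay(\bg)-1)/2)$ rigorously. This requires: (a) setting up the MDM with an appropriate choice of the active set of multi-indices $\bu \in \bU$ and the number of function evaluations allocated to each $\bu$, balancing the truncation error (controlled by the tail of $\sum_\bu \gamma_\bu$, hence by $\decay(\bg)$) against the per-coordinate discretization error (controlled by $\dec(k_1)$ via the univariate algorithms); (b) verifying that the total cost in the unrestricted subspace sampling model stays within the budget, using the cost-function bounds \eqref{g82} and the product structure of $\gamma_\bu$; and (c) handling the subtlety that the univariate algorithms act on $H(k_1)$ but must be transported through the embeddings $T_j$ of Theorem~\ref{t2}(i), whose norms $(1+\gamma_j c^\up(a))^{1/2}$ must be controlled uniformly — this is exactly what assumption (\hc) and the finiteness of $C^\up(a)$ buy us. For integration one also needs the extension from $r\in\N$ to general $r>1$ provided by Theorem~\ref{t5}(i), and for $L^2$-approximation the sharp univariate bound from Theorem~\ref{t4}; both feed into the per-coordinate cost-error tradeoff. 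I would expect the bookkeeping in step (a)–(b) to be the longest part, essentially a re-run of the standard MDM error/cost estimates (as in \citet{GHHRW2020, KuoEtAl2017}) adapted to the fact that the domain is $\X^\up \subsetneq \R^\N$ rather than $D^\N$, but with no genuinely new difficulty beyond what Section~\ref{s3} has already resolved.
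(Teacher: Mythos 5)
Your proposal is correct and follows essentially the same route as the paper: the upper bound on $\dec(K)$ via the restriction $H(M_a^\lo)\to H(K)$ from Theorem~\ref{Lemma:Tensorprodukt2_dim_Raeume} together with the known lower bounds for anchored superposition kernels and the reduction $\err_n(K)\ge\err_n(k_1)$, and the lower bound via the restriction $H(K)\to H(M_a^\up)$ from Theorem~\ref{t2} combined with the MDM upper bounds for such kernels (which the paper likewise imports from \citet{PW11} and \citet{Was12} rather than reproving, deferring the construction to Section~\ref{s4.3}). The only cosmetic difference is that the paper makes explicit the use of the closed graph theorem to identify $\dec(k_1)=\dec(1+m_a^\up)$, which your argument uses implicitly.
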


\begin{proof}
Let us consider any fixed $a \in \R$.

Due to Theorem~\ref{t2}, we have $H(k_1) = H(1 + m^\up_a)$. Hence the 
closed graph theorem ensures that $H(k_1)$ is continuously embedded into 
$H(1 + m^\up_a)$ and vice versa. Consequently, 
\[
\dec(k_1) = \dec(1+m_a^\up).
\]
Furthermore, due to Theorem~\ref{t2},
the restriction map 
$f\mapsto f|_{\X^\up}$ is a continuous linear map from $H(K)$ into 
$H(M^\up_a)$.
Due to Lemma~\ref{l22}, 
we have $f = f|_{\X^\up}$ in $L^2(\mu)$ for all 
$f\in H(K)$ and hereby
\[
\err_n(K) \leq C \cdot \err_n(M^\up_a)
\]
for all $n \in \N$, 
with $C$ denoting the norm of the restriction map.
Therefore the lower bound on $\dec(K)$ follows from
\[
\dec(M^\up_a) \ge \min \left( \dec (1+m^\up_a),
\frac{\decay(\bg) - 1}{2} \right),
\]
a result that was established for superpositions of weighted tensor
products of an anchored kernel, 
see \citet[Thm.~2]{PW11} for integration and \citet[Cor.~9]{Was12} for 
$L^2$-approximation and cf.\ Remark~\ref{r26}.
See Section~\ref{s4.3} for further details.

Due to Theorem~\ref{Lemma:Tensorprodukt2_dim_Raeume} and
the closed graph theorem, we have 
that the restriction $f\mapsto f|_{\X}$ is a continuous linear map from 
$H(M^\lo_{a})$ into $H(K)$. Since $\mu(\X) = \mu(\X^{\lo}) =1$, 
we obtain 
$f = f|_{\X}$ in $L^2(\mu)$ for all $f\in H(M^\lo_{a})$. Consequently, 
\[
\err_n(K) \geq c \cdot \err_n(M^\lo_a)
\]
for all $n\in \N$,
with $c$ denoting the norm of the restriction map.
Since integration is not harder than 
$L^2$-approximation, the upper bound 
\[
\dec(M^\lo_a) \le \frac{\decay(\ba_1^{-1}) - 1}{2}
\]
follows now from \citet[Sec.~3.3]{KuoEtAl10}, cf.\ also 
\citet[Thm.~4 and Sec.~3.2.1]{DG12}. 
Note that additionally 
\[
\err_n(K) \ge \err_n(k_1)
\]
for all $n\in\N$. This establishes the upper bound on $\dec(K)$ in 
Theorem~\ref{t3}.
\end{proof}

Theorem~\ref{t3}, together with the results
from Section~\ref{s4.1} for the univariate case,
yields matching upper and lower bounds on the decay $\dec(K)$
in the cases (PG) and (EG), i.e., for Fourier weights with a
polynomial and with a (sub-)exponential growth, respectively.
We put
\[
\rho := \liminf_{j \to \infty} \frac{r_j \cdot \ln(2)}{\ln(j)} 
\geq 1,
\]
which quantifies the asymptotic behavior of 
$\alpha_{1,j}$ as $j$ tends to $\infty$.

\begin{corollary}\label{t1}
In the case (PG) we have
\[
\dec (K) = \tfrac{1}{2} \cdot \min\left(2r_1, \rho - 1\right)
\]
for the integration problem and
\[
\dec (K) = \tfrac{1}{2} \cdot \min\left(r_1, \rho - 1\right)
\]
for the $L^2$-approximation problem. 
In the case (EG) we have 
\[
\dec (K) =
\tfrac{1}{2} \cdot \left(\rho - 1\right)
\]
for the integration and the $L^2$-approximation problem.
\end{corollary}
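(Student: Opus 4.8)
The plan is to combine Theorem~\ref{t3} with the univariate results from Section~\ref{s4.1}, together with the explicit values of $\gamma_j = 2^{r_1-r_j}$ and $\alpha_{1,j}$ computed in Lemma~\ref{l10}. The two ingredients entering the bracket in Theorem~\ref{t3} are $\dec(k_1)$ and the decays $\decay(\bg)$, $\decay(\ba_1^{-1})$. The crucial observation is that in both cases (PG) and (EG), the latter two decays coincide: since $\gamma_j = 2^{r_1 - r_j} = \alpha_{1,1}/\alpha_{1,j}$, the sequences $\bg$ and $\ba_1^{-1}$ differ only by the constant factor $\alpha_{1,1} = 2^{r_1}$ (in case (PG), $\alpha_{1,1}=2^{r_1}$; in case (EG), $\alpha_{1,1}=2^{r_1}$), so $\decay(\bg) = \decay(\ba_1^{-1})$. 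Hence the upper and lower bounds in Theorem~\ref{t3} collapse to a single value, \emph{provided} the univariate quantity $\dec(k_1)$ is known exactly.

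\textbf{First}, I would identify $\decay(\ba_1^{-1}) = \decay(\bg)$ with $\rho - 1$ shifted appropriately. In both cases $\alpha_{1,j}^{-1} = 2^{-r_j} \cdot (\text{a factor depending on the case: } 2^{-r_j} \text{ for (PG) since } \alpha_{1,j}=2^{r_j}; \; 2^{-r_j} \text{ for (EG) since } \alpha_{1,j} = 2^{r_j \cdot 1^{b_j}} = 2^{r_j})$, so $\ba_1^{-1}$ is (up to the constant $2^{r_1}$) the sequence $(2^{-r_j})_{j\in\N}$. By the characterization $\decay(\bz) = \liminf_{j\to\infty} \ln(z_j^{-1})/\ln(j)$, we get $\decay(\ba_1^{-1}) = \liminf_{j\to\infty} r_j \ln(2)/\ln(j) = \rho$. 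Therefore $(\decay(\ba_1^{-1}) - 1)/2 = (\rho-1)/2$, and likewise $(\decay(\bg)-1)/2 = (\rho-1)/2$, which handles both outer terms in the Theorem~\ref{t3} bracket simultaneously.

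\textbf{Second}, I would pin down $\dec(k_1)$. In case (PG), the univariate reference space is $H(k_1) = H(k^{[r_1]})$ with $r_1 > 1$ (this follows from $2^{-r_1} = \inf_j 2^{-r_j}$ being a term of the convergent series \eqref{g1}, so in fact $r_1 > 1$ is needed here — one should check this is guaranteed, or invoke Remark~\ref{r5}; actually \eqref{g1} only forces $r_j \to \infty$, but (\hb) via Lemma~\ref{l10} gives what is needed, and the statement implicitly assumes the standing hypotheses). Then Theorem~\ref{t5}(ii) gives $\dec(k^{[r_1]}) = r_1$ for integration, and Theorem~\ref{t4} (with \eqref{eq:summability} satisfied) gives $\dec(k^{[r_1]}) = \decay(\ba_1^{-1/2}) = r_1/2$ for $L^2$-approximation. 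Substituting into Theorem~\ref{t3}: for integration, $\dec(K) = \min(r_1, (\rho-1)/2) = \frac12 \min(2r_1, \rho-1)$; for approximation, $\dec(K) = \min(r_1/2, (\rho-1)/2) = \frac12 \min(r_1, \rho-1)$. In case (EG), the space $H(k_1)$ has Fourier weights $2^{r_1 \nu^{b_1}}$ of (sub-)exponential growth, so $\decay((\alpha_{\nu,1}^{-1})_\nu) = \infty$; hence $\dec(k_1) = \decay(\ba^{-1/2}) = \infty$ for approximation by Theorem~\ref{t4}, and $\dec(k_1) = \infty$ for integration as well (the hypothesis $\decay(\ba^{-1}) > 1$ in Theorem~\ref{t5}(i) holds trivially, giving $\dec(k_1) \geq \decay(\ba^{-1}) = \infty$). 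Thus the $\dec(k_1)$ term is vacuous and $\dec(K) = (\rho-1)/2$ in both problems.

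\textbf{The main obstacle} I anticipate is a bookkeeping one rather than a conceptual one: making sure case (PG) with $r_1 > 1$ is actually forced by the standing assumptions (\ha)--(\hc), and handling the boundary situation $\rho = 1$ or $2r_1 = \rho - 1$ cleanly — in those degenerate cases the $\min$ is attained at or shared between the two arguments, but since Theorem~\ref{t3} already gives matching upper and lower bounds (because $\decay(\bg) = \decay(\ba_1^{-1}) = \rho$), there is no gap to close; the equality $\dec(K) = \frac12\min(\cdot,\cdot)$ holds regardless. One final small point to verify is that the univariate integration result Theorem~\ref{t5}(ii) requires $r_1 \ge 1$ for the exact value $\dec(k^{[r_1]}) = r_1$; if only $r_1 > 1/2$ were available one would get only an upper bound, but combined with the lower bound of Theorem~\ref{t3} (which uses $\decay(\bg)$, not $\dec(k_1)$, when $\dec(k_1)$ is the smaller term) the argument still goes through as long as $r_1 > 1$, which is indeed the operative regime here.
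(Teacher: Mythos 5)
Your proposal is correct and follows essentially the same route as the paper's proof: establish $\decay(\bg)=\decay(\ba_1^{-1})=\rho$ from $\gamma_j = 2^{r_1}\cdot\alpha_{1,j}^{-1}$ so that the bounds in Theorem~\ref{t3} collapse, then evaluate $\dec(k_1)$ via Theorems~\ref{t5} and~\ref{t4}. Your side-worry about whether $r_1>1$ is forced is unnecessary, since (PG) assumes $r_j>1$ for all $j$ outright, but this does not affect the argument.
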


\begin{proof}
We have
\[
\gamma_j = 2^{r_1-r_j} = 2^{r_1} \cdot \alpha_{1,j}^{-1}
\]
for every $j \in \N$
in both cases, (PG) and (EG), see Lemma~\ref{l10}, and therefore
\begin{equation}\label{eq:rho_dec}
\decay(\bg) = \rho = \decay(\ba_1^{-1}). 
\end{equation}
Together with Theorem~\ref{t3} this implies
\[
\dec (K) = 
\min \left( \dec (k_1), \frac{\rho - 1}{2} \right)
\]
for integration and for $L^2$-approximation.

The decay of
$(\alpha_{\nu,1}^{-1})_{\nu \in \N}$ is equal to $r_1$ in the case
(PG) and equal to $\infty$ in the case (EG).
Using Theorems~\ref{t5} and \ref{t4} we obtain
$\dec(k_1) = r_1$ for integration and $\dec(k_1) = r_1/2$ for 
$L^2$-approximation in the case (PG), while $\dec(k_1) = \infty$
for both problems in the case (EG).
\end{proof}

\begin{rem}
Corollary~\ref{t1} reveals that $r_1$, which is the minimal smoothness
among all Hermite spaces $H(k_j)$ of univariate functions, and
$\rho$, which concerns the growth of the smoothness as $j \to
\infty$, determine the decay of the minimal errors
on Hermite spaces $H(K)$ with
Fourier weights of a polynomial or (sub-)exponential growth.
On the one hand, if $\rho$ is sufficiently large then these
minimal errors decay as fast as the minimal errors in the
univariate case on the space
$H(k_1)$; for (PG) this means $\rho \geq 2r_1+1$ or 
$\rho  \geq r_1+1$, while we need $\rho=\infty$ in the case (EG).
On the other hand, we have $\dec(K)=0$ if $\rho=1$.
\end{rem}

\subsection{Multivariate Decomposition Methods on Hermite Spaces}%
\label{s4.3}

Let us now describe the algorithms that yield the lower bounds on the 
polynomial decay rate $\dec(K)$ of the $n$-th minimal errors for
integration and $L^2$-approxi\-ma\-tion on Hermite spaces
of functions depending on infinitely many variables. These kind of 
algorithms were first called \emph{changing dimension algorithms}, 
see \citet{KuoEtAl10},  and are now known as 
\emph{multivariate decomposition methods (MDMs)}. MDMs rely on the 
\emph{anchored function decomposition}, which is also known as 
\emph{cut HDMR}, where HDMR stands for ``high-dimensional model 
representation'', cf.\ \citet{RA99} and \citet{KSWW10}.

\subsubsection{MDMs operating on $H(M^\up_a)$}\label{s4.3.1}

At first, we consider 
the reproducing kernels $m_a^\up$ and $M^\up_a$ 
with a fixed $a\in \R$, as introduced in 
Theorem~\ref{t2} and further discussed in Remark~\ref{r26}. Since 
\[
H(1) \cap H(m_a^\up) = \{0\} \qquad \text{and} \qquad
M_a^\up = \sum_{\bu \in \bU} \gamma_\bu \cdot m_{a,\bu}^\up,
\]
the Hilbert space $H(M_a^\up)$ is the orthogonal sum of
the closed subspaces $H(m_{a,\bu}^\up)$ with $\bu \in \bU$.
See \citet[Sec.~3.2]{GMR14} for a general result. 
The construction of MDMs is based on the corresponding decomposition
\begin{equation}\label{g81}
f = \sum_{\bu \in \bU} f_\bu
\end{equation}
of the functions $f \in H(M^\up_a)$, where
$f_\bu$ denotes the orthogonal projection of $f$ onto 
$H(m^\up_{a,\bu})$, so that the series in
\eqref{g81} converges in $H(M_a^\up)$.
Observe that $f_\bu$ only depends on the variables with indices in
$\bu$, and
\begin{equation}\label{g77}
\|f\|^2_{H(M_a^\up)} = \sum_{\bu \in \bU} \gamma_\bu^{-1} \cdot
\|f_\bu\|^2_{H(m_{a,\bu}^\up)}.
\end{equation}

Since $m^\up_a$ is anchored at $a$, i.e.,
$m_a^\up(a,a) = 0$, we have
\begin{equation}\label{g79}
f_\bu(\bx) = 
\sum_{\bv\subseteq \bu} (-1)^{|\bu\setminus \bv|} \cdot
f(\bx_\bv, \bsa_{\bv^c})
\end{equation}
for every $\bx \in \X^\up$, where $f(\bx_\bv, \bsa_{\bv^c})$
denotes the value of $f$ at the point $\by$
given by $y_j := x_j$ if $j\in \bv$ and $y_j=a$ otherwise, 
see \citet[Exmp.~2.3]{KSWW10}.
Observe that $\by \in \X^\up$ due to Lemma~\ref{l22}.
In particular, $f_\emptyset$ is constant and
equal to $f(\bsa)$, where $\bsa := (a,a,a, \dots) \in \X^{\up}$.
We conclude that a finite number of function values of $f$ suffices
to obtain a function value of $f_\bu$; the corresponding number 
$2^{|\bu|}$ is acceptable as long as $|\bu|$ is sufficiently small.
The decomposition \eqref{g81} with $f_\bu$ given by
\eqref{g79} is called the \emph{anchored decomposition} of $f$.
The anchored component $f_\bu$ of $f$ has the property
that $f_\bu(\bx_\bu) = 0$ if $x_j = a$ for some $j\in \bu$. 
See \citet[Thm.~2.1]{KSWW10} for a general result on the
decomposition of multivariate functions.

In principle,
we may study integration and $L^2$-ap\-proxi\-mation on each of
the spaces $H(m^\up_{a,\bu})$ with $\bu \in \bU \setminus \{\emptyset\}$.
Formally the underlying measure is the infinite product $\mu$, 
but since the elements of $H(m^\up_{a,\bu})$ only depend on the 
variables with indices in $\bu$, we are actually dealing with 
integration and $L^2$-approximation with respect to the 
$|\bu|$-dimensional standard normal distribution.

To construct an MDM, we have to choose 
a finite set $\mathcal{A}$ of non-empty elements of $\bU$, 
i.e., of finite sets of variables,
and for each $\bu \in \mathcal{A}$ an algorithm 
$A_{\bu,n_\bu}$ for integration or $L^2$-approximation 
of functions from $H(m_{a,\bu}^\up)$,
which uses $n_\bu \in \N$ function values of each input function
$f_\bu \in H(m^\up_{a,\bu})$.
We assume that the algorithms $A_{\bu,n_\bu}$ are of the form 
\eqref{form_linear_algorithmus}.
For $L^2$-approximation we assume additionally
that also
the approximating functions $A_{\bu,n_\bu}(f_\bu)$ only depend on the 
variables with indices in $\bu$.
The corresponding MDM on $H(M^\up_a)$ is given by
\begin{equation}\label{eq:form_MDM}
A (f) := 
f(\bsa) + 
\sum_{\bu \in \mathcal{A}} A_{\bu,n_\bu}(f_\bu).
\end{equation}
For notational convenience we put $n_\bu := 0$ 
for $\bu \in \bU \setminus (\mathcal{A} \cup \{ \emptyset\})$
and $A_{\bu,0} := 0$. 
For every choice of $\mathcal{A}$ and of algorithms
$A_{\bu,n_\bu}$ for $\bu \in \mathcal{A}$ we obtain 
\begin{equation}\label{g80}
\err^2(A,M^\up_a) 
\leq
\sum_{\bu \in \bU \setminus \{\emptyset\}} 
\gamma_\bu \cdot \err^2 (A_{\bu,n_\bu},m^\up_{a,\bu})
\end{equation}
from \eqref{g77}.
See \citet[p.~513]{PW11} for integration;
the same argument applies for $L^2$-approximation.
We add that
\[
\err (A_{\bu,0},m^\up_{a,\bu}) =
\left(\err (0,m^\up_a)\right)^{|\bu|}
\]
for the error of the zero algorithm, i.e., for the operator norm of the 
integration functional or the $L^2$-embedding operator.
Since $A_{\bu,n_\bu}$ is applied to the anchored
components $f_\bu$ of $f \in H(M_a^\up)$ in \eqref{eq:form_MDM}, 
we obtain 
\begin{equation}\label{g83}
\cost(A) \leq \$(0) +
\sum_{\bu \in \mathcal{A}} n_\bu \cdot 2^{|\bu|} \cdot \$(|\bu|)
\end{equation}
from \eqref{g79}, see, e.g., \citet[p.~512]{PW11}.

Since $m^\up_{a,\bu}$ is of tensor product form,
Smolyak's construction may be used to obtain good algorithms 
$A_{\bu,n_\bu}$ 
on the space $H(m^\up_{a,\bu})$ from good algorithms 
on the space $H(m_a^\up)$ of functions of
a single variable. Most importantly,
the results from \citet{WW95} or \citet{GW2020} yield
explicit upper bounds 
for the error of Smolyak algorithms
$A_{\bu,n_\bu}$ on the spaces $H(m_{a,\bu}^\up)$
for all $\bu \in \bU \setminus \{\emptyset\}$ and $n_\bu \in \N$.

For suitable MDMs we thus 
have an explicit upper bound for the worst-case error on 
the unit ball in $H(M_a^\up)$ and an
explicit upper bound for the cost. See \citet{PW11} and
\citet{Was13} for a detailed
analysis in a general setting and for the almost optimal choice of 
$\mathcal{A}$ and of the algorithms $A_{\bu,n_\bu}$ for $\bu \in
\mathcal{A}$.

\subsubsection{MDMs operating on $H(K)$}

Now we actually want to find algorithms for integration or
$L^2$-approximation on the Hermite space $H(K)$. To this purpose, we 
clearly prefer to work exclusively with the Hermite kernels $k_1$ and $K$ 
and to construct MDMs without having to calculate the reproducing 
kernels $m_a^\up$ and $M^\up_a$ explicitly. Let us sketch the idea of 
how to achieve this goal: Basically, we only
have to find a good sequence 
of algorithms $A_n$ for integration or $L^2$-approximation, 
respectively, on the Hermite space $H(k_1)$ of univariate
functions,
which use $n$ function values of each input $f \in H(k_1)$.
Although we do not know $m^\up_a$ explicitly,  we still know that 
$H(m^\up_a)$ is continuously embedded in $H(k_1)$ and that the 
corresponding embedding constant is at most $c^\up(a)$,
see Theorem~\ref{t2}(i).
Hence we have
\[
\err(A_n, m_a^\up) \leq c^\up(a) \cdot \err(A_n, k_1),
\]
which serves as the starting point for the analysis of
the Smolyak algorithms $A_{\bu,n_\bu}$ on the spaces
$H(m_{a,\bu}^\up)$. Furthermore,
\[
\err(0,m_a^\up) \leq c^\up(a)
\]
for integration and
\[
\err(0,m_a^\up) \leq c^\up(a) \cdot \max(1,\alpha_{1,1}^{-1/2})
\]
for $L^2$-approximation.
Finally, we employ Theorem~\ref{t2}.(iii)
to return to the space $H(K)$:
Since the embedding of $H(K)$ into $H(M^\up_a)$ by restriction 
is continuous with operator norm bounded by $C^\up(a)$, we obtain
\begin{equation}\label{g84}
\err^2(A,K) 
\leq \left(C^\up(a)\right)^2 \cdot 
\sum_{\bu \in \bU \setminus \{\emptyset\}} \gamma_\bu \cdot \err^2
(A_{\bu,n_\bu},m^\up_{a,\bu})
\end{equation}
from \eqref{g80}.

We stress that the only difference between the standard setting
for the MDM, as discussed in Section~\ref{s4.3.1},
and the Hermite setting is the presence of
$c^\up(a)$ and $C^\up(a)$ in the error bounds, with small values of 
both quantities being preferable. We have explicit formulas for
$c^\up(a)$ and $C^\up(a)$, and the minimal value of both of
these quantities is attained for $a=0$, see Remark~\ref{r26}.

\subsubsection{The Cases {\rm (PG)} and {\rm (EG)}}

Let us become more specific for the cases (PG) and (EG), 
where we assume $r_1 \leq r_2 \dots$ to simplify the presentation
and $\rho > 1$ as well as $\dec(k_1) > 0$ to exclude the trivial
case $\dec(K)=0$. We add that
\[
\gamma_\bu = 2^{\sum_{j \in \bu} (r_1-r_j)}
\]
in both cases, see Lemma~\ref{l10}.

At first, we consider univariate integration and
$L^2$-approximation, as studied in Section~\ref{s4.1}.
Let us assume that we have constants $c, \kappa > 0$
and a sequence of algorithms $A_n$ on $H(k_1)$ 
using $n$ function evaluations and satisfying
\begin{equation}\label{est:pre_Smolyak_bound} 
\err(A_n, k_1) \le c \cdot n^{-\kappa}
\end{equation}
for every $n \in \N$.
Clearly, $\kappa$ cannot be larger than $\dec(k_1)$, and in view of 
Corollary~\ref{t1} we may confine ourselves to 
\[
\kappa < \min \left( \dec(k_1), \frac{\rho-1}{2} \right).
\]
Note that 
\[
\err(A_n, m^\up_a) \le c \cdot c^\up(a) \cdot n^{-\kappa}
\qquad \text{and} \qquad \err(0, m^\up_a) \le c^\up(a).
\]

Based on the algorithms $A_n$, Smolyak's
construction provides us with linear 
algorithms $A_{\bu,n}$ that use at most $n$ function values of any 
function from $H(m_{a,\bu}^\up)$. The following
statement can be found in \citet[Thm.~7]{GW2020}
(where not only deterministic, but, more generally, randomized algorithms 
are considered) or can be derived from \citet[Thm.~1]{WW95}. 
Given \eqref{est:pre_Smolyak_bound}, there exist constants
$C_0,C_1 > 0$, which only depend 
on $c$, $c^\up(a)$, and $\kappa$, such that
\begin{equation}\label{g85}
\err(A_{\bu,n}, m^\up_{a,\bu}) \le 
C_0 C_1^{|\bu|} 
\left( 1 + 
\frac{\ln(n+1)}{\max(|\bu|-1,1)} \right)^{(\kappa + 1)(|\bu|-1)} 
(n+1)^{-\kappa}
\end{equation}
for all $\bu \in \bU \setminus \{\emptyset\}$ and $n  \in \N_0$.
Together with \eqref{g84}, this yields 
an explicit upper bound for the worst-case error of the MDM $A$ 
on the unit ball in $H(K)$ in
terms of $\mathcal{A}$ and of $n_\bu$ for $\bu \in \mathcal{A}$.

We may therefore consider the following optimization problem: 
For a given error tolerance $\eps > 0$, determine 
$\mathcal{A}$ and $n_\bu$ for $\bu \in \mathcal{A}$ such that 
the upper bound for $\err(A,K)$ is at most $\eps$ and
the upper bound \eqref{g83} for $\cost(A)$ is as small as possible.
This problem has been studied in a more general setting
in, e.g., \citet{PW11} and \citet{Was13}.

In the sequel, we follow \cite{PW11}.
We fix $\eps > 0$ and
\[
\delta \in 
\frac{1}{\rho} \cdot  {]2\kappa,\rho-1[}
\]
and put
\[
L := \prod_{j \in \N} (1+ \gamma_j^{1-\delta}) - 1.
\]
Since $\decay(\bg) = \rho$, see \eqref{eq:rho_dec},
this choice of $\kappa$ and $\delta$ ensures that $L$ as well as 
$\sum_{j \in \N} \gamma_j^{\delta/(2\kappa)}$ are finite. 
Moreover, we define
\[
p_\bu := C_1^{2|\bu|} \cdot \gamma_\bu^ \delta.
\] 
Finally, we choose
\[
\mathcal{A} := 
\{ \bu\in \bU \colon p_\bu \cdot L C_0^2
> \varepsilon^2 \}
\]
and 
\[
n_\bu := 
\left\lfloor  
\left( p_\bu \cdot L C_0^2 \cdot \eps^{-2} \right)^{1/(2\kappa)}
\right\rfloor
\]
for every $\bu \in \mathcal{A}$.
Due to \eqref{g84} and \eqref{g85}, the algorithm $A$ from 
\eqref{eq:form_MDM} satisfies the error bound
\[
\err(A, K) \le C^\up(a) \cdot 
B(\eps) \cdot \eps,
\]
where
\[
B(\eps) :=
\sup_{\bu \in \mathcal{A}} 
\left( 1 + 
\frac{\ln(n_\bu+1)}{\max(|\bu|-1,1)} \right)^{(\kappa + 1)(|\bu|-1)}.
\]
Due to \eqref{g83}, we obtain the cost bound
\[
\cost(A) \le \$(0) + C_2 \cdot
\$(d(\eps)) \cdot \eps^{-1/\kappa},
\]
where 
\[
C_2 := 
(C_0 L^{1/2})^{1/\kappa} \cdot
\exp \left( 2 C_1^{1/\kappa} 
\sum_{j \in \N} \gamma_j^{\delta/(2\kappa)} \right)
\]
and 
\[
d(\eps):= \max\{|u| \colon u\in \mathcal{A}\} 
\]
is the so-called active dimension. Since 
\[
B(\eps) = \eps^{-o(1)}
\qquad\text{and}\qquad
d(\eps) = o(\ln(1/\eps)),
\]
see \citet[p.~513, Lem.~1]{PW11}, we obtain
\[
\err(A,K) = \eps^{1-o(1)} 
\qquad\text{and}\qquad
\cost(A) \le \eps^{-1/\kappa - o(1)}
\]
using \eqref{g82}.

This shows that if we are able to find sequences
of algorithms on $H(k_1)$ that satisfy the error estimate
\eqref{est:pre_Smolyak_bound} 
for $\kappa$ arbitrarily close to $\min(\dec(k_1), (\rho-1)/2)$, 
we obtain a sequence of MDMs that achieves on $H(K)$ a convergence 
rate arbitrarily close to $\dec(K)$. We know from Section~\ref{s4.1} 
that such algorithms exist in the univariate setting; for integration
constructive results have been derived in \citet{DungNguyen22}.

\appendix

\section{Countable Tensor Products of RKHSs}\label{a1}

In this section we consider Hilbert spaces $H_j \neq \{0\}$
with $j \in \N$ over the same scalar field $\K \in \{\R,\C\}$.
At first, we sketch the construction of the incomplete tensor product
\[
H^{(\bu)} := \bigotimes_{j \in \N} H_j^{(u_j)}
\]
based on any sequence $\bu =(u_j)_{j \in \N}$ of unit vectors 
$u_j \in H_j$. See \citet{Neu39} for a thorough study and 
\citet[App.~A]{GHHR2021} for a more detailed sketch.

In the sequel we use the notation $\bof = (f_j)_{j \in \N}$
with $f_j \in H_j$ for elements $\bof \in \times_{j \in \N} H_j$. 
Let $C^{(\bu)}$ denote the set of all sequences $\bof$ such that 
\[
\sum_{j \in \N} \left| \|f_j\|_{H_j}-1 \right| < \infty
\]
and
\[
\sum_{j \in \N} \left| \scp{u_j}{f_j}_{H_j}-1 \right| < \infty.
\]
It turns out that
\[
\KK^{(\bu)}(\bog,\bof) :=
\prod_{j \in \N} \scp{f_j}{g_j}_{H_j}
\]
is well-defined for all $\bof,\bog \in C^{(\bu)}$ and 
\[
\KK^{(\bu)} \colon C^{(\bu)} \times C^{(\bu)} \to \K
\]
is a reproducing kernel. 
The incomplete tensor product $H^{(\bu)}$ is the RKHS 
$H(\KK^{(\bu)})$, up to an extension of the 
functions $f \in H(\KK^{(\bu)})$ by zero
onto a larger domain $C^{(\bu)} \subsetneq C \subsetneq \times_{j
\in \N} H_j$.

For any $\bof \in C^{(\bu)}$ the function 
$\bigotimes_{j \in \N} f_j := \KK^{(\bu)}(\cdot,\bof)$ 
is called an elementary tensor. Obviously,
$\| \bigotimes_{j \in \N} f_j \|_{H^{(\bu)}} = 
\prod_{j \in \N} \|f_j\|_{H_j}$, and
the span of the elementary tensors is dense in $H^{(\bu)}$.

\begin{rem}\label{ra1} 
Consider any choice of orthonormal bases $(e_{\nu,j})_{\nu \in N_j}$ 
in each of the spaces $H_j$. Assuming $0 \in N_j$ for
notational convenience, we require that
\[
e_{0,j}=u_j
\]
for every $j \in \N$. Let $\bN$ denote the set of all 
sequences $\bn := (\nu_j)_{j \in \N}$ with $\nu_j \in N_j$ for every 
$j \in \N$ and with $\{j \in \N \colon \nu_j \neq 0\}$ being finite.
Then the elementary tensors 
\begin{equation}\label{g45}
e_\bn := \bigotimes_{j \in \N} e_{\nu_j,j}
\end{equation}
with $\bn \in \bN$ form an orthonormal basis of $H^{(\bu)}$.
\end{rem}

Subsequently, we assume that $H_j := H(k_j)$ with reproducing kernels 
\[
k_j \colon D_j \times D_j \to \K.
\]
To $\bx \in D := \times_{j \in \N} D_j$ we associate 
\[
\tau (\bx) := (k_j(\cdot,x_j))_{j \in \N} \in \times_{j \in \N} H(k_j), 
\]
and we put
\[
\X^{(\bu)} := \{\bx \in D \colon \tau(\bx) \in C^{(\bu)} \}.
\]
If $\X^{(\bu)} \neq \emptyset$ then
\[
K^{(\bu)}(\bx,\by) := 
\KK^{(\bu)}(\tau(\bx),\tau(\by)) = 
\prod_{j \in \N} k_j(x_j,y_j)
\]
with $\bx,\by \in \X^{(\bu)}$ yields a reproducing kernel 
\[
K^{(\bu)} \colon \X^{(\bu)} \times \X^{(\bu)} \to \K
\]
of tensor product form. 

In the sense of the following result the incomplete tensor product
$H^{(\bu)}$ is the RKHS with tensor product kernel $K^{(\bu)}$.
This result was first proven in \citet[Thm.~4.10]{Rue20}, see also  \citet[Thm.~A.6]{GHHR2021} for a more succinct version of the proof.

\begin{theo}\label{ta1}
If $\X^{(\bu)} \neq \emptyset$ then
\[
\Phi \colon H^{(\bu)} \to \K^{\X^{(\bu)}},
\]
given by
\[
\phantom{\qquad\quad \bx \in \X^{(\bu)},}
\Phi g (\bx) := g(\tau(\bx)),
\qquad\quad \bx \in \X^{(\bu)},
\]
is an isometric isomorphism between $H^{(\bu)}$ and  $H(K^{(\bu)})$.
In particular, for $\bof \in C^{(\bu)}$ and $\bx \in \X^{(\bu)}$
the product $\prod_{j \in \N} f_j(x_j)$ converges and
\[
\Bigg(\Phi \bigotimes_{j \in \N} f_j \Bigg) (\bx) = 
\prod_{j \in \N} f_j(x_j).
\]
\end{theo}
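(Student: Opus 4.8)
The plan is to determine the action of $\Phi$ on the elementary tensors (which span a dense subspace of $H^{(\bu)}$), to read off from this an inner‑product identity on a rich enough subset, and then to assemble these into the claimed isometric isomorphism; the only substantial step will be proving that the kernel sections $\KK^{(\bu)}(\cdot,\tau(\by))$ with $\by\in\X^{(\bu)}$ have dense span in $H^{(\bu)}$.

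First I would compute $\Phi$ on elementary tensors. For $\bof\in C^{(\bu)}$ write $\bigotimes_{j\in\N} f_j=\KK^{(\bu)}(\cdot,\bof)$. For $\bx\in\X^{(\bu)}$ we have $\tau(\bx)\in C^{(\bu)}$ by definition, so the reproducing property of each $k_j$ gives
\[
\Bigl(\Phi\bigotimes_{j\in\N} f_j\Bigr)(\bx)=\KK^{(\bu)}(\tau(\bx),\bof)=\prod_{j\in\N}\scp{f_j}{k_j(\cdot,x_j)}_{H(k_j)}=\prod_{j\in\N} f_j(x_j),
\]
the product converging since $\KK^{(\bu)}$ is well defined on $C^{(\bu)}\times C^{(\bu)}$; this already establishes the last assertion of the theorem. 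In particular, for $\by\in\X^{(\bu)}$ the section $\KK^{(\bu)}(\cdot,\tau(\by))=\bigotimes_{j\in\N} k_j(\cdot,y_j)$ is sent to $\bx\mapsto\prod_{j\in\N} k_j(x_j,y_j)=K^{(\bu)}(\bx,\by)$, i.e.\ to the reproducing kernel section of $H(K^{(\bu)})$ at $\by$. Combining this with the reproducing property of $\KK^{(\bu)}$,
\[
\scp{\KK^{(\bu)}(\cdot,\tau(\by))}{\KK^{(\bu)}(\cdot,\tau(\bx))}_{H^{(\bu)}}=\KK^{(\bu)}(\tau(\bx),\tau(\by))=K^{(\bu)}(\bx,\by)=\scp{K^{(\bu)}(\cdot,\by)}{K^{(\bu)}(\cdot,\bx)}_{H(K^{(\bu)})},
\]
so the linear map $\KK^{(\bu)}(\cdot,\tau(\by))\mapsto K^{(\bu)}(\cdot,\by)$ preserves inner products and extends to an isometry $\Psi$ of the closed linear span $V$ of the sections $\KK^{(\bu)}(\cdot,\tau(\by))$, $\by\in\X^{(\bu)}$, onto $H(K^{(\bu)})$ (using that the $K^{(\bu)}(\cdot,\by)$ span a dense subspace). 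For $g\in V$ one then checks $\Psi g(\bx)=\scp{\Psi g}{K^{(\bu)}(\cdot,\bx)}_{H(K^{(\bu)})}=\scp{g}{\KK^{(\bu)}(\cdot,\tau(\bx))}_{H^{(\bu)}}=g(\tau(\bx))=\Phi g(\bx)$, so $\Phi|_V=\Psi$ is an isometric isomorphism of $V$ onto $H(K^{(\bu)})$.

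It therefore remains to prove $V=H^{(\bu)}$, which is the heart of the matter. Since the elementary tensors are dense, it suffices to show $\bigotimes_{j\in\N} f_j\in V$ for every $\bof\in C^{(\bu)}$, and we may assume all $f_j\neq 0$ (otherwise this tensor is $0$). Fixing a base point $\bz\in\X^{(\bu)}$, which exists by hypothesis, I note that whenever $\by$ agrees with $\bz$ outside a finite set, $\tau(\by)$ agrees with $\tau(\bz)\in C^{(\bu)}$ outside a finite set and hence $\by\in\X^{(\bu)}$; using the density of $\spann\{k_j(\cdot,y):y\in D_j\}$ in $H(k_j)$ and continuity of finite tensor products, this yields $(\bigotimes_{j\le N} g_j)\otimes(\bigotimes_{j>N} k_j(\cdot,z_j))\in V$ for all $N\in\N$ and all $g_j\in H(k_j)$. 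Finally I would let $N\to\infty$: with $\bof^{(N)}:=(f_1,\dots,f_N,k_{N+1}(\cdot,z_{N+1}),k_{N+2}(\cdot,z_{N+2}),\dots)$, expanding the squared norm gives
\[
\Bigl\|\bigotimes_{j\in\N} f_j-\bigotimes_{j\in\N} f^{(N)}_j\Bigr\|_{H^{(\bu)}}^2=\prod_{j\in\N}\|f_j\|^2+\prod_{j\le N}\|f_j\|^2\prod_{j>N} k_j(z_j,z_j)-2\Re\Bigl(\prod_{j\le N}\|f_j\|^2\prod_{j>N}\overline{f_j(z_j)}\Bigr),
\]
and since $\bof\in C^{(\bu)}$ and $\bz\in\X^{(\bu)}$ force both $f_j\to u_j$ and $k_j(\cdot,z_j)\to u_j$ in $H(k_j)$ with square‑summable deviations, the infinite products $\prod_{j\in\N} k_j(z_j,z_j)$ and $\prod_{j\in\N} f_j(z_j)$ converge, so their tails tend to $1$ and the right‑hand side tends to $0$. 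Hence $\bigotimes_{j\in\N} f_j=\lim_N\bigotimes_{j\in\N} f^{(N)}_j\in V$, which gives $V=H^{(\bu)}$ and finishes the proof. The one genuinely delicate point is this last limit: verifying that $\prod_j k_j(z_j,z_j)$ and $\prod_j f_j(z_j)$ converge requires combining the square‑summability of $\|f_j-u_j\|$ and $\|k_j(\cdot,z_j)-u_j\|$ with the two summability conditions built into the definition of $C^{(\bu)}$, which is exactly where the incomplete tensor product structure does the work; the rest is routine bookkeeping with reproducing kernels.
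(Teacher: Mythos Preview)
Your proof is correct. The paper itself does not supply a proof of this statement but cites external references; your route---computing $\Phi$ on kernel sections, extending isometrically on their closed span $V$, and then proving $V=H^{(\bu)}$ by approximating an arbitrary elementary tensor with truncations anchored at a base point $\bz\in\X^{(\bu)}$---is the standard one and matches what is done in those references. The step you flag as delicate works exactly as you indicate: since $\bof,\tau(\bz)\in C^{(\bu)}$ one actually has $\sum_j|f_j(z_j)-1|<\infty$, obtained by writing
\[
f_j(z_j)-1=\scp{f_j-u_j}{k_j(\cdot,z_j)-u_j}+(\scp{f_j}{u_j}-1)+(\scp{u_j}{k_j(\cdot,z_j)}-1)
\]
and combining Cauchy--Schwarz on the first term (using the $\ell^2$-summability of $\|f_j-u_j\|$ and $\|k_j(\cdot,z_j)-u_j\|$ you derived) with the two defining summability conditions of $C^{(\bu)}$ on the remaining terms; this is what forces the tail products to tend to $1$ rather than merely converge.
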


In addition to the reproducing kernels $k_j$ and the unit vectors
$u_j \in H(k_j)$ we consider reproducing kernels 
$\ell_j \colon D_j \times D_j \to \K$ such that 
$H(k_j) \subseteq H(\ell_j)$ and $\|u_j\|_{H(\ell_j)} = 1$
for every $j \in \N$. Moreover, we let 
\[
T_j \colon H(k_j) \hookrightarrow H(\ell_j)
\]
denote the corresponding identical embedding.
Analogously to $\X^{(\bu)}$ and $K^{(\bu)}$ we consider
the set $\Y^{(\bu)}$ of all $\bx \in D$ such that
\[
\sum_{j \in \N} |\ell_j(x_j,x_j)-1| < \infty
\quad \text{and} \quad
\sum_{j \in \N} |u_j(x_j)-1| < \infty
\]
and the tensor product kernel 
\[
\phantom{\qquad\quad \bx, \by \in \Y^{(\bu)},}
L^{(\bu)} (\bx,\by) := \prod_{j \in \N} \ell_j(x_j,y_j),
\qquad\quad \bx, \by \in \Y^{(\bu)},
\]
assuming that $\Y^{(\bu)} \neq \emptyset$.

\begin{theo}\label{ta2}
If $\emptyset \neq \Y^{(\bu)} \subseteq \X^{(\bu)}$
and $\prod_{j \in \N} \|T_j\| < \infty$ then

\[
\{ f|_{\Y^{(\bu)}} \colon f \in H(K^{(\bu)}) \}
\subseteq H(L^{(\bu)}),
\]
and the operator norm of the restriction
$T \colon H(K^{(\bu)}) \to H(L^{(\bu)})$, 
$f \mapsto f|_{\Y^{(\bu)}}$ is given by
\[
\| T \| = \prod_{j\in \N} \| T_j \|. 
\]
\end{theo}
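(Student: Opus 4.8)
The plan is to reduce the assertion, via Theorem~\ref{ta1}, to a purely Hilbert-space statement: that the tensor product $\mathcal S:=\bigotimes_{j\in\N}T_j$ of the embeddings $T_j$ is a well-defined bounded operator from the incomplete tensor product $H^{(\bu)}=\bigotimes_{j\in\N}H(k_j)^{(u_j)}$ into $\widetilde H^{(\bu)}:=\bigotimes_{j\in\N}H(\ell_j)^{(u_j)}$ with $\|\mathcal S\|=\prod_{j\in\N}\|T_j\|$, and that under the identifications of Theorem~\ref{ta1} this operator $\mathcal S$ corresponds precisely to the restriction $f\mapsto f|_{\Y^{(\bu)}}$. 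Note first that $\widetilde H^{(\bu)}$ is legitimately defined since $\|u_j\|_{H(\ell_j)}=1$, that $T_ju_j=u_j$, and that $\|T_j\|\ge\|T_ju_j\|_{H(\ell_j)}=1$; hence $\prod_j\|T_j\|<\infty$ forces $\|T_j\|\to1$ and makes $\prod_{j\le n}\|T_j\|$ increase to $\prod_{j}\|T_j\|$. Applying Theorem~\ref{ta1} once to the kernels $k_j$ and once to the kernels $\ell_j$ (with the same unit vectors $u_j$, admissible on the $H(\ell_j)$ side by assumption) gives isometric isomorphisms $\Phi\colon H^{(\bu)}\to H(K^{(\bu)})$, $\Phi g=g\circ\tau$, and $\Psi\colon\widetilde H^{(\bu)}\to H(L^{(\bu)})$, $\Psi h=h\circ\tau'$, where $\tau(\bx)=(k_j(\cdot,x_j))_j$ and $\tau'(\bx)=(\ell_j(\cdot,x_j))_j$; here $\X^{(\bu)}$ is by definition the set of $\bx$ with $\tau(\bx)\in C^{(\bu)}$, and, via the reproducing property, $\Y^{(\bu)}$ is the analogous defining set for the kernels $\ell_j$.

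\textbf{The operator $\mathcal S$.} The span $\mathcal D$ of elementary tensors $\bigotimes_j f_j$ with $f_j=u_j$ for all but finitely many $j$ is dense in $H^{(\bu)}$: for $\bof\in C^{(\bu)}$, truncating to $f_j^{(n)}:=f_j$ for $j\le n$ and $f_j^{(n)}:=u_j$ for $j>n$ yields $\bigotimes_j f_j^{(n)}\to\bigotimes_j f_j$, since $\prod_{j\le n}\|f_j\|^2\to\prod_j\|f_j\|^2$ and $\prod_{j>n}\scp{f_j}{u_j}\to1$ by the two summability conditions defining $C^{(\bu)}$. On such a tensor $(T_jf_j)_j$ lies in the defining class of $\widetilde H^{(\bu)}$ (because $f_j=u_j$ eventually and $\|u_j\|_{H(\ell_j)}=1$), so we set $\mathcal S(\bigotimes_j f_j):=\bigotimes_j T_jf_j$ on $\mathcal D$. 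Choosing $n$ large enough that a given $g\in\mathcal D$ is a combination of elementary tensors that are constant in $u_j$ beyond coordinate $n$, the standard factorization $H^{(\bu)}\cong\bigl(H(k_1)\otimes\cdots\otimes H(k_n)\bigr)\otimes H_{>n}^{(\bu)}$ and its analogue for $\widetilde H^{(\bu)}$ turn $\mathcal S$ into $(T_1\otimes\cdots\otimes T_n)$ tensored with the map sending the tail reference vector $\bigotimes_{j>n}u_j$ to $\bigotimes_{j>n}u_j$; both tail reference vectors have norm $1$ and the norm of a finite tensor product of Hilbert-space operators is the product of the norms, so $\|\mathcal S g\|\le\bigl(\prod_{j\le n}\|T_j\|\bigr)\|g\|\le\bigl(\prod_j\|T_j\|\bigr)\|g\|$. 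Thus $\mathcal S$ extends to $H^{(\bu)}$ with $\|\mathcal S\|\le\prod_j\|T_j\|$. The reverse inequality follows by testing, for fixed $n$ and $\eps>0$, on $\bigotimes_j f_j$ with unit vectors $f_j$ satisfying $\|T_jf_j\|\ge\|T_j\|-\eps$ for $j\le n$ and $f_j=u_j$ for $j>n$: then $\|\mathcal S(\bigotimes_j f_j)\|=\prod_{j\le n}\|T_jf_j\|\ge\prod_{j\le n}(\|T_j\|-\eps)$ while $\|\bigotimes_j f_j\|=1$, and we let $\eps\to0$ and $n\to\infty$.

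\textbf{Identifying $\mathcal S$ with the restriction.} Finally I would check that $\Psi\circ\mathcal S\circ\Phi^{-1}$ is the restriction map. For fixed $\bx\in\Y^{(\bu)}\subseteq\X^{(\bu)}$, both $g\mapsto(\Psi\mathcal S g)(\bx)$ and $g\mapsto(\Phi g)(\bx)$ are continuous linear functionals on $H^{(\bu)}$ (point evaluations are continuous on RKHSs and $\Phi,\Psi,\mathcal S$ are bounded), so it suffices to compare them on $\mathcal D$. For $g=\bigotimes_j f_j\in\mathcal D$ one has $\mathcal S g=\bigotimes_j T_jf_j$, which as a \emph{function} equals $\bigotimes_j f_j$, and the ``in particular'' clauses of Theorem~\ref{ta1}, applied to the $\ell_j$ and to the $k_j$ respectively, give $(\Psi\mathcal S g)(\bx)=\prod_j (T_jf_j)(x_j)=\prod_j f_j(x_j)=(\Phi g)(\bx)$. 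Hence $(\Phi g)|_{\Y^{(\bu)}}=\Psi\mathcal S g\in H(L^{(\bu)})$ for every $g\in H^{(\bu)}$, which — since $\Phi$ maps onto $H(K^{(\bu)})$ — gives $\{f|_{\Y^{(\bu)}}\colon f\in H(K^{(\bu)})\}\subseteq H(L^{(\bu)})$; and the restriction operator $T$ equals $\Psi\circ\mathcal S\circ\Phi^{-1}$, so $\|T\|=\|\mathcal S\|=\prod_{j\in\N}\|T_j\|$ because $\Phi$ and $\Psi$ are isometric. The main obstacle is the middle step: making the infinite operator tensor product $\mathcal S=\bigotimes_j T_j$ rigorous on incomplete tensor products — identifying a convenient dense domain, setting up the finite/tail factorization, and proving the two-sided norm bound; the remaining steps are bookkeeping with the isometries from Theorem~\ref{ta1}.
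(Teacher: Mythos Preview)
Your proposal is correct and follows essentially the same approach as the paper: reduce to the infinite tensor product operator $\mathcal S=\bigotimes_j T_j$ between the incomplete tensor products, then conjugate by the isometries $\Phi$ and $\Psi$ of Theorem~\ref{ta1} to obtain the restriction map. The only minor differences are cosmetic: the paper checks the identity $\Psi\circ\mathcal S\circ\Phi^{-1}f=f|_{\Y^{(\bu)}}$ on the orthonormal basis $(e_{\bn})_{\bn\in\bN}$ of Remark~\ref{ra1} (a subset of your dense set $\mathcal D$) rather than on general elementary tensors, and it simply quotes the norm identity $\|\mathcal S\|=\prod_j\|T_j\|$ from the finite-tensor-product case whereas you spell out the finite/tail factorization and the two-sided bound explicitly.
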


\begin{proof}
The statement of the theorem is obtained via tensorization
and application of Theorem~\ref{ta1}. More precisely, let 
\[
H^{(\bu)} := \bigotimes_{j \in \N} (H(k_j))^{(u_j)}, \qquad
G^{(\bu)} := \bigotimes_{j \in \N} (H(\ell_j))^{(u_j)},
\]
and let $T^{\otimes} \colon H^{(\bu)} \to G^{(\bu)}$ denote the tensor 
product of the identical embeddings $T_j$.
Note that
\begin{equation}\label{eq:a1}
\| T^{\otimes} \| = \prod_{j\in\N} \| T_j \|,
\end{equation}
which can easily be inferred from the corresponding statement 
for finite tensor products, see, e.g., 
\citet[Prop.~4.150]{Hac12}.
Moreover, let
$\Phi \colon H^{(\bu)} \to H(K^{(\bu)})$ and
$\Upsilon \colon G^{(\bu)} \to H(L^{(\bu)})$ 
denote the isometric isomorphisms according to Theorem~\ref{ta1}.
It suffices to show that
\[
\Upsilon \circ T^{\otimes} \circ \Phi^{-1} f = f|_{\Y^{(\bu)}}
\]
for every $f \in H(K^{(\bu)})$.

Consider any orthonormal bases $(e_\bn)_{\bn \in \bN}$ according to
Remark~\ref{ra1}. Let 
\[
f := \sum_{\bn \in \bN} c_\bn \cdot \Phi e_\bn \in H(K^{(\bu)})
\]
with $c_\bn \in \R$ such that $\sum_{\bn \in \bN} |c_\bn|^2 < \infty$.
We obtain
\[
\Upsilon \circ T^{\otimes} \circ \Phi^{-1} f (\bx) = 
\sum_{\bn \in \bN} c_\bn \cdot \Upsilon \circ T^{\otimes} e_\bn (\bx)
\]
for every $\bx \in \Y^{(\bu)}$. Furthermore,
\[
T^{\otimes} e_\bn = 
\bigotimes_{j \in \N} T_j e_{\nu_j,j}
\in G^{(\bu)},
\]
and therefore
\[
\Upsilon \circ T^{\otimes} e_\bn (\bx) =
\prod_{j \in \N} e_{\nu_j,j} (x_j)
= \Phi e_\bn (\bx) 
\]
for every $\bx \in \Y^{(\bu)}$. It follows that
\[
\Upsilon \circ T^{\otimes} \circ \Phi^{-1} f (\bx) = f(\bx)
\]
for every $\bx \in \Y^{(\bu)}$, as claimed. 

The statement on the operator norm of $T$ follows from
\eqref{eq:a1} as well as the fact that $\Phi$ and $\Upsilon$ 
are isometries.
\end{proof}

\section{Monotonicity of the Univariate Hermite Kernel}\label{a2}

In this section we consider a non-decreasing sequence 
$\ba := (\alpha_{\nu})_{\nu\in \N}$ 
of Fourier weights satisfying \eqref{g11}
and the corresponding univariate Hermite kernel, see \eqref{g15}.

\begin{lemma}\label{l-a1}
For $x,y\in\R$ with $|x| \leq |y|$ we have
\[
k(x,x)\leq k(y,y).
\]
\end{lemma}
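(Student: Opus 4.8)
The plan is to reduce the claim to a statement about each Hermite polynomial separately and then exploit a known monotonicity property of $h_\nu^2$. Writing out the kernel on the diagonal,
\[
k(x,x) = 1 + \sum_{\nu \in \N} \alpha_\nu^{-1} \cdot h_\nu(x)^2,
\]
it suffices to show that for every fixed $\nu \in \N$ the function $x \mapsto h_\nu(x)^2$ is non-decreasing in $|x|$, i.e.\ $h_\nu(x)^2 \le h_\nu(y)^2$ whenever $|x| \le |y|$. Since $h_\nu$ has definite parity, $h_\nu(x)^2$ is an even function, so it is enough to treat $0 \le x \le y$ and prove that $x \mapsto h_\nu(x)^2$ is non-decreasing on $[0,\infty)$. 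Summing the resulting inequalities against the non-negative coefficients $\alpha_\nu^{-1}$ then yields $k(x,x) \le k(y,y)$; the series converges by \eqref{g11}, so no issue arises with interchanging the inequality and the sum.

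The main work is therefore the univariate statement: for the $L^2(\mu_0)$-normalized Hermite polynomial $h_\nu$ (so that $h_\nu$ solves $h_\nu'' - x h_\nu' + \nu h_\nu = 0$, or the physicists'/probabilists' variant, up to normalization), the square $h_\nu^2$ increases on $[0,\infty)$. I would establish this by differentiating: $(h_\nu^2)' = 2 h_\nu h_\nu'$, and one needs $h_\nu(x) h_\nu'(x) \ge 0$ for $x \ge 0$. This is a classical fact that follows from the interlacing of the zeros of $h_\nu$ and $h_\nu'$ together with a Sturmian/amplitude argument; alternatively, one can use the differential-equation form of the Sonine–Pólya oscillation theorem, which says that for solutions of $y'' + Q(x) y = 0$ the ``amplitude'' $y^2 + y'^2/Q$ is monotone where $Q$ is monotone — applied here after transforming the Hermite equation to self-adjoint (Liouville normal) form, whose potential is $\nu + 1/2 - x^2/4$, decreasing on $[0,\infty)$. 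The cleanest route, though, is probably to invoke the known result directly: it is well documented (e.g.\ in Szeg\H{o}'s book, or in \citet[Lem.~3.1 and 3.2]{GHHR2021} where pointwise bounds on $h_\nu$ are collected) that $x \mapsto h_\nu(x)^2$ attains its minimum-on-$\R$-relative behavior in such a way that the local maxima form a monotone sequence as $|x|$ grows; one then only needs monotonicity of the envelope, not of $h_\nu^2$ itself, because...

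Actually, on reflection the honest statement is slightly subtler than ``$h_\nu^2$ is monotone on $[0,\infty)$'': that fails, since $h_\nu$ oscillates and $h_\nu^2$ has interior zeros. What is true and what I would actually prove is that the sum $k(x,x) = 1 + \sum_\nu \alpha_\nu^{-1} h_\nu(x)^2$ is monotone in $|x|$ even though the individual terms are not. The correct tool is a Christoffel–Darboux type identity or, more robustly, the probabilistic interpretation: $k(x,x) = \mathbb{E}[ (\sum_\nu \alpha_\nu^{-1/2} \xi_\nu h_\nu(x))^2]$ is the variance of the Gaussian process $G(x) = \sum_\nu \alpha_\nu^{-1/2}\xi_\nu h_\nu(x)$ with i.i.d.\ standard normal $\xi_\nu$, and one can try to show this variance grows with $|x|$ by an explicit computation of $\tfrac{d}{dx} k(x,x) = 2\sum_\nu \alpha_\nu^{-1} h_\nu(x) h_\nu'(x)$ and using the recurrence $h_\nu' = \sqrt{\nu}\, h_{\nu-1}$ (probabilists' normalization) to telescope the derivative into a manifestly sign-definite expression for $x \ge 0$ after summation by parts, where the monotonicity $\alpha_\nu^{-1} \downarrow$ (from assumption (A1)) enters crucially.

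The hard part will be exactly this last summation-by-parts step: turning $\sum_\nu \alpha_\nu^{-1} h_\nu(x) h_\nu'(x) = \sum_\nu \alpha_\nu^{-1}\sqrt{\nu}\, h_\nu(x) h_{\nu-1}(x)$ into something provably $\ge 0$ for $x\ge 0$. I expect one needs the partial-sum identity $\sum_{\nu=0}^{N} h_\nu(x) h_{\nu-1}(x)\sqrt{\nu} = \tfrac12\big((\text{something in } h_N, h_{N+1})'\big)$ or a Turán-type inequality, combined with Abel summation against the non-increasing weights $\alpha_\nu^{-1}$ and the convergence guaranteed by \eqref{g11}; it is here that monotonicity assumption (A1) on the Fourier weights is genuinely used, which matches the fact that the lemma is stated under that hypothesis rather than for arbitrary summable weights. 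Once the derivative is shown to be non-negative on $[0,\infty)$, evenness of $k(\cdot,\cdot)$ on the diagonal finishes the proof.
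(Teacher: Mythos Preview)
Your second pass lands on essentially the paper's own strategy: reduce to $x\ge 0$ by parity, differentiate the partial sums $f_n(x)=\sum_{\nu\le n}\alpha_\nu^{-1}h_\nu^2(x)$, feed in the recurrence $h_\nu'=\sqrt{\nu}\,h_{\nu-1}$, and use the monotonicity of $(\alpha_\nu^{-1})$ via a summation-by-parts argument. Where your plan stays vague, however, is precisely the step that carries the weight, and the tools you reach for (Tur\'an inequalities, Christoffel--Darboux, partial sums of $\sqrt{\nu}\,h_\nu h_{\nu-1}$) are not the ones that make it fall out cleanly.

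The paper's route is the following concrete identity. Combining $h_\nu'=\sqrt{\nu}\,h_{\nu-1}$ with $h_\nu=\nu^{-1/2}(x h_{\nu-1}-h_{\nu-1}')$ gives
\[
(h_\nu^2)'(x)=2x\,h_{\nu-1}^2(x)-(h_{\nu-1}^2)'(x),
\]
and unwinding this telescope yields
\[
(h_\nu^2)'(x)=2x\sum_{\kappa=1}^{\nu}(-1)^{\nu-\kappa}h_{\kappa-1}^2(x).
\]
Now sum against $\alpha_\nu^{-1}$ and swap the order of summation:
\[
f_n'(x)=2x\sum_{\kappa=1}^{n}h_{\kappa-1}^2(x)\sum_{m=0}^{n-\kappa}(-1)^m\alpha_{m+\kappa}^{-1}.
\]
Each inner alternating sum is non-negative because $(\alpha_\nu^{-1})$ is non-increasing, so $f_n'(x)\ge 0$ for $x\ge 0$, and letting $n\to\infty$ finishes. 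This \emph{is} Abel summation in disguise (the alternating inner sum is exactly what Abel produces), so your instinct was right; you just had not isolated the identity $(h_\nu^2)'=2x\,h_{\nu-1}^2-(h_{\nu-1}^2)'$ that turns the derivative into a manifestly signed double sum. Your initial term-by-term attempt is indeed false, as you noted: $h_\nu^2$ oscillates on $[0,\infty)$, and it is only the weighted combination that is monotone.
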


\begin{proof}
We observe that $k(x,x) = k(-x,-x)$
hold for all $x\in\R$, since each of the Hermite polynomials
is either an even or an odd function. Thus, it suffices to show that 
$x\mapsto k(x,x)$ is non-decreasing on $\R^+$.
To this end, we use two recurrence relations of Hermite polynomials,
namely
\[
h'_\nu(x)=\nu^{1/2} \cdot h_{\nu-1}(x) \quad\text{and}\quad
h_\nu(x) = \nu^{-1/2} \cdot (xh_{\nu-1}(x)-h'_{\nu-1}(x))
\]
for $\nu\in\N$ and $x\in\R$, see \citet[Eqn.~(5.5.10)]{S75}.
This implies 
\[
(h_\nu^2)'(x) = 2xh_{\nu-1}^2(x)-(h_{\nu-1}^2)'(x)
\]
and thus, inductively,
\begin{equation}\label{eq:kernel_derivative}
(h_\nu^2)'(x) =2x \cdot 
\sum_{\kappa=1}^{\nu}(-1)^{\nu-\kappa} \cdot h_{\kappa-1}^2(x).
\end{equation}
For $n\in\N$ and $x \in \R_+$ we consider the partial sum
\[
f_n(x):=\sum_{\nu=0}^{n}\alpha_{\nu}^{-1} \cdot h_\nu^2(x).
\]
By \eqref{eq:kernel_derivative} we have
\begin{align*}
f'_n(x)
&= 
2x \cdot \sum_{\nu=1}^{n} 
\alpha_{\nu}^{-1} \sum_{\kappa=1}^{\nu} 
(-1)^{\nu-\kappa} h_{\kappa-1}^2(x)\\
&= 2x \cdot
\sum_{\kappa=1}^{n}h_{\kappa-1}^2(x)\sum_{\nu=0}^{n-\kappa}
(-1)^{\nu}\alpha_{\nu+\kappa}^{-1}.
\end{align*}
Since $\ba$ is non-decreasing, the inner sum on the right-hand side
is always non-negative, and therefore
$f_n$ is non-decreasing on $\R^+$. 
Finally, $f_n(x)$ converges to $k (x,x)$ for every $x\in\R^+$.
\end{proof}

\subsection*{Acknowledgment}

Part of this work was done during the Research Meeting 22073 
``Complexity of Infinite-Dimensional Problems'' at Schlo\ss\ Dagstuhl.
The authors would like to thank the staff of Schlo\ss\ Dagstuhl 
for their hospitality.

We are grateful to Yuya Suzuki for pointing out the paper
\citet{DungNguyen22} to us.

{\small

\bibliographystyle{abbrvnat}
\bibliography{references}

}

\end{document}